\documentclass[a4paper,10pt,oneside,fleqn]{scrartcl}
\usepackage[english]{babel}
\usepackage{amsmath,amsthm,amsfonts,amssymb}
\usepackage{graphicx}
\usepackage[round]{natbib}
\usepackage[usenames,dvipsnames]{color}

\usepackage{subfigure}
\usepackage{multirow}
\usepackage{longtable}
\usepackage{pstricks, pst-node, pst-tree}
\usepackage{hyperref}

\usepackage{array}
\newcolumntype{C}[1]{>{\centering\arraybackslash}p{#1}}

\newcommand{\ec}[1]{$\varepsilon$-constraint#1}
\newcommand{\ecm}{$\varepsilon$-constraint method }

\newcommand{\awt}{augmented weighted Tchebycheff method }

\newcommand{\ve}{\varepsilon}
\newcommand{\wt}{weighted Tchebycheff }

\newcommand{\R}{\mathbb{R}}

\newcommand{\N}{\mathbb{N}}
\newcommand{\NS}{Z_{N}}   

\newcommand{\mbs}{\mathcal{B}_{s}}
\newcommand{\mbss}{\mathcal{B}_{s+1}}
\newcommand{\mbsb}{\overline{\mathcal{B}}_{s}}

\newcommand{\mbsp}{\overline{\mathcal{B}}_{p}}
\newcommand{\tB}{\tilde{B}}
\newcommand{\bB}{\bar{B}}
\newcommand{\hB}{\hat{B}}
\newcommand{\itk}{i=1,\dots,m}
\newcommand{\itkk}{i \in \{1,2,3\}}
\newcommand{\jtk}{j=1,\dots,m}

\newcommand{\Bsi}{B^s_i}
\newcommand{\Bssi}{B^{s+1}_i}
\newcommand{\Bsj}{B^s_j}
\newcommand{\Bssj}{B^{s+1}_j}

\newcommand{\Bssk}{B^{s+1}_k}

\newcommand{\qnr}{quasi non-redundant }
\newcommand{\JB}{\overline{J}_{B}}

\newtheorem{theorem}{Theorem}[section]
\newtheorem{lemma}[theorem]{Lemma}

\newtheorem{corollary}[theorem]{Corollary}
\newtheorem{definition}[theorem]{Definition}
\newtheorem{example}[theorem]{Example}

\newtheorem{assumption}[theorem]{Assumption}

\usepackage{algorithm}
\usepackage{algorithmicx,algpseudocode} 

\newcommand{\algorithmicinput}{\textbf{Input:}}
\newcommand{\INPUT}{\item[\algorithmicinput]}
\newcommand{\algorithmicoutput}{\textbf{Output:}}
\newcommand{\OUTPUT}{\item[\algorithmicoutput]}

\algrenewcommand{\algorithmiccomment}[1]{\hfill // #1} 

\newcommand{\hide}[1]{}
\newcommand{\rh}[1]{\parbox[0pt][#1][c]{0cm}{}}
\newcommand{\addspacerow}{\rh{10pt}}

\allowdisplaybreaks
 
\begin{document}

\title{A linear bound on the number of scalarizations needed to solve discrete tricriteria optimization problems}
\author{Kerstin D\"achert \and Kathrin Klamroth}

\maketitle

\begin{abstract}
Multi-objective optimization problems are often solved by a sequence of parametric single-objective problems, so-called scalarizations. 
If the set of nondominated points is finite, the entire nondominated set can be generated in this way. 
In the bicriteria case it is well known that this can be realized by an adaptive approach which
requires the solution of at most $2|\NS|-1$ subproblems, where $\NS$ denotes the nondominated set of the underlying problem and a subproblem corresponds to a scalarized problem. 
For problems with more than two criteria, no methods were known up to now for which the number of subproblems depends linearly on the number of nondominated points. 
We present a new procedure for finding the entire nondominated set of tricriteria optimization problems for which the number of subproblems to be solved is bounded by $3 |\NS|-2$,
hence, depends linearly on the number of nondominated points. 
The approach includes an iterative update of the search region that, given a (sub-)set of nondominated points, describes the area in which additional nondominated points may be located. 
If the \ecm is chosen as scalarization, the upper bound can be improved to $2 |\NS|-1$. 

{\bf Keywords: Discrete tricriteria optimization; Scalarization; Box algorithm}

\end{abstract}


\section{Introduction} \label{sec:intro}
The determination of the nondominated set is the basis for a multitude of methods in multiple criteria decision making. 
In multiple objective combinatorial optimization the nondominated set is discrete and, assuming that some natural bounds are given for the objective values, also finite.
In this situation, a complete enumeration of all nondominated points can be realized by the successive solution of a series of appropriately formulated scalarized 
problems which are called \emph{subproblems} in what follows. 
Ideally, the total number of 
subproblems depends linearly on the number of nondominated points. 
Indeed, in the bicriteria case approaches are known which require the solution of at most $2|\NS|-1$ subproblems, where $\NS$ denotes the finite  nondominated set of the underlying problem. 
Thereby, $|\NS|$ subproblems are solved to generate all points in $\NS$, and the additional $|\NS|-1$ subproblems are needed to ensure that no further nondominated points exist between the already generated ones \citep[see, e.g.,][]{chalmet86,ralphs06}.
If the \ecm is used, the entire nondominated set of a bicriteria problem can be generated within $|\NS|+1$ subproblems \citep{laumanns06}. 
However, up to now, no linear bounds are known for higher dimensional problems. 
The best known approach has a theoretical bound of $(|\NS|+1)^{m-1}$ subproblems for problems with $m$ objectives \citep{laumanns06}.
In this paper, we present a new procedure for finding the entire nondominated set of tricriteria optimization problems for which the number of scalarized subproblems to be solved is bounded by $3 |\NS|-2$. 
This is achieved by the definition of a new split criterion which allows to exclude redundant parts of the search region. 
It can then be shown that the number of boxes, into which the search region is decomposed, depends linearly on the number of nondominated points. 
If the \ecm is used, the bound can even be improved to $2 |\NS|-1$.

\subsection{Terminology and Definitions}\label{ss:term}

We consider multiple criteria optimization problems 
\begin{equation} \label{optprob}
\min_{x \in X}  \; f(x)=(f_1(x),\dots,f_m(x))^{\top} 
\end{equation}
with $m \geq 2$ objective functions $f_i:X \to \R, \, i=1,\dots,m,$ and with feasible set 
$X \neq \emptyset$. 
Throughout this paper we assume that $X$ is a discrete set. 
The image set of the feasible set $X$ in the outcome space is denoted by $Z:=f(X)$ where $Z$ is a finite set of distinct points in $\R^m$. 

We use the Pareto concept of optimality:  
A solution $\bar{x}\in X$ is called \emph{Pareto optimal} or \emph{efficient} if there does not exist a feasible solution $x\in X$ such that $f_i(x) \leq f_i(\bar{x})$ for all $\itk$ and $f_j(x) < f_j(\bar{x})$ for at least one $j\in \{1,\dots,m\}$. 
The corresponding objective vector $f(x)\in\R^m$ is called \emph{nondominated} in this case. 
If, on the other hand, $f(x) \leq f(\bar{x})$ for some feasible $x\in X$, i.e., $f_i(x) \leq f_i(\bar{x})$ for all $\itk$ and $f_j(x) < f_j(\bar{x})$ for at least one $j\in \{1,\dots,m\}$, we say that $f(x)$ \emph{dominates} $f(\bar{x})$, and $x$ {\it dominates} $\bar{x}$. 
If strict inequality holds for all $m$ components, i.e., if $f_i(x) < f_i(\bar{x})$ for all $\itk$, then $x$ \emph{strictly dominates} $\bar{x}$. 
If there exists no feasible solution $x\in X$ that strictly dominates $\bar{x}$, then $\bar{x}$ is called \emph{weakly Pareto optimal} or \emph{weakly efficient}. 
We denote the set of efficient solutions of~\eqref{optprob} by $X_E$ and refer to it as the \emph{efficient set}. 

The image set of the set of efficient solutions is denoted by $\NS :=f(X_E)$ and is called the \emph{nondominated set} of problem~\eqref{optprob}.
To simplify notation, we will often refer to the points in $Z$ without relating them back to their preimages in the feasible set. Consequently, we equivalently formulate problem~\eqref{optprob} in the outcome space as
\begin{equation}\label{MOoutcome}
\min_{z \in Z} \;  z=(z_1,\dots,z_m)^{\top} \\
\end{equation}
where $Z$ is a discrete set of points in $\R^m$. 
For two vectors $z,\bar{z}\in Z$ we write 
$$\begin{array}{lcl}
z<\bar{z} & \text{if} & z_i<\bar{z}_i \text{~for all~} \itk,\\
z\leq\bar{z} & \text{if} & z_i\leq\bar{z}_i \text{~for all~} \itk \text{~and~} \exists \, j\!\in\!\{1,\dots, m\}:\, z_j<\bar{z}_j, \text{~and}\\
z\leqq\bar{z} & \text{if} & z_i\leq\bar{z}_i \text{~for all~} \itk.
\end{array}$$
The symbols $>$, $\geq$ and $\geqq$ are used accordingly. 

\begin{definition}[Nondominance] \label{def:nondominance}
A point $\bar{z} \in Z$ is called nondominated if and only if there exists no point $z \in Z$ such that $z\leq\bar{z}$.
\end{definition}

A lower bound on the nondominated points of~\eqref{MOoutcome} is given by the \emph{ideal point} which we denote by $z^I$. 
The $i$-th component of the ideal point is defined as the minimum of the $i$-th objective, i.e., $z_i^I := \min \{ z_i \,:\, z \in Z\}$ for all $\itk$. 
A point $z^U$ that strictly dominates $z^I$ is called a \emph{utopia point}. 
Note that in general $z^I \not\in Z$. 
On the other hand, the \emph{nadir point} $z^N$ with components $z_i^N := \max \{ z_i \,:\, z \in \NS\}$ for all $\itk$ provides an upper bound on the nondominated set of~\eqref{MOoutcome}.
While it can be easily determined for bi-objective problems, this is in general not the case for higher dimensional problems. 
However, any upper bound on the nondominated set is sufficient for our purpose. 
Therefore, we typically use upper bounds on $Z=f(X)$ which can be determined also in the presence of more than two criteria. 

A common technique to solve problems of the form~\eqref{optprob} is to iteratively transform the original multiple objective problem into a series of parametric single-objective problems, so called \emph{scalarizations} \citep[see, e.g.,][]{ehrgott05,miettinen99}. 
A variety of different scalarization methods exists which differ, among other things, with respect to their theoretical properties. 
Of particular importance is the question whether the solutions generated by a specific method always correspond to nondominated points of~\eqref{optprob} and whether all nondominated points of~\eqref{optprob} can be generated by appropriately varying the involved parameters.
In this article we do not focus on a specific scalarization, but assume to have 
a scalarization method at hand which possesses these two properties: Every nondominated point can be generated, and the point corresponding to an optimal solution of the scalarization is nondominated. 

\subsection{Literature Review}\label{ss:lit}
The idea of solving parametric single-objective optimization problems in order to generate a (complete) set of nondominated points is well known in the literature.
Especially for bicriteria problems, the literature is rich. 
\citet{aneja79} use a parametric weighted sum method in order to find the extreme supported nondominated points of (integer) linear problems.
They show that the algorithm performs exactly $2n-3$ iterations, where $n$ denotes the number of extreme nondominated points and the two lexicographic minima are assumed to be known. 
\citet{chalmet86} use an \ecm with a weighted sum objective as scalarization for solving bicriteria integer problems. 
Due to the hybrid scalarization, every nondominated point can be computed. 
The authors show that a complete representation is obtained by solving $2|\NS|+1$ integer programs including the computation of the lexicographic optima.
\citet{eswaran89} employ a weighted Tchebycheff scalarization to determine a complete or incomplete representation of the nondominated set of nonlinear integer bicriteria problems. 
\citet{solanki91} use an \awt to generate incomplete representations of mixed integer bicriteria linear programs. 
\citet{ulungu95} address bicriteria combinatorial optimization problems. They introduce a two phase procedure, where all extreme supported nondominated points are computed by weighted sum problems. 
In a second phase, all remaining nondominated points are generated with the help of specific combinatorial methods. 
In \citet{sayin05}, the lexicographic weighted Tchebycheff method and a variant of it serve to solve bicriteria discrete optimization problems. 
Tchebycheff scalarizations are also used by \citet{ralphs06}.
Their algorithm is shown to find all nondominated points by solving $2|\NS|-1$ subproblems including the generation of the lexicographic optima. 
The box algorithm of \citet{hamacher07} uses  lexicographic \ec{} problems.  
While it is designed for incomplete representations, it can also be used to generate the entire nondominated set. 

Also for the discrete multicriteria case, several approaches for finding the entire nondominated set  based on parametric algorithms exist.
\citet{klein82} use a kind of \ecm to determine the entire nondominated set of linear integer problems.
The remaining search region containing possible further nondominated points is described by disjunctive constraints. 
While \citet{chalmet86} mainly address the bicriteria case, they also propose a generalization to the multicriteria case that is based on recursively solving bicriteria problems. 
\citet{tenfelde03} presents a generalization of the two phase method to any number of criteria. 
\citet{sylva04} revisit the idea of \citet{klein82} and reformulate the disjunctive constraints with the help of binary variables. 
\citet{laumanns06} use lexicographic \ec{} problems and show that at most $(|\NS|+1)^{m-1}$ subproblems need to be solved to generate the entire nondominated set.
In the bicriteria case, this yields a total number of only $|\NS|+1$ subproblems, which is smaller than the upper bound of \citet{ralphs06} due to the special scalarization employed.
The numerical experiments for a knapsack problem with three objectives reveal that the number of subproblems needed is considerably less than $(|\NS|+1)^{2}$. 
The authors state that it is an open question whether the number of subproblems can be bounded linearly in terms of the number of nondominated points for problems with more than two criteria.
\citet{laumanns05} improve the algorithm of \citet{laumanns06}. However, no better theoretical bound on the number of subproblems is obtained. 
\cite{oezlen09} 
use an augmented \ecm within a recursive algorithm that is similar to the approach of \citet{chalmet86} and demonstrate that $\mathcal{O}(|\NS|^{m-1})$ iterations are required in the worst case. 
\citet{dhaenens10} extend the approach of \citet{tenfelde03} to a three phase procedure.
Their numerical experiments show that the determination of the nadir point is very expensive regarding computational time.
\citet{przybylski10} also propose a two phase method for integer problems with more than two objectives.
They also encounter the problem of describing the search region and solve it by saving certain upper bound vectors. 
Dominated upper bound vectors are filtered out by pair-wise comparisons.  
\citet{lokman13} build on \citet{sylva04} and propose two improved algorithms based on an augmented $\epsilon$-constraint scalarization. 
While the numerical study of \citet{lokman13} suggests a linear bound on the number of subproblems to be solved in the tricriteria case, only an upper bound of $\mathcal{O}(|\NS|^{2})$ is derived for $m=3$.  
\citet{oezlen13} improve \citet{oezlen09} by saving the right-hand side vectors and the corresponding solutions of the integer problems that have already been solved. Thereby, a huge saving of computational time is achieved.
\citet{kirlik14} improve the method of \citet{laumanns06} by changing the order in which the subproblems are solved. While the numerical results are very competitive, no better theoretical bound on the number of subproblems is proven. 

\subsection{Goals and Outline}

We present an algorithm that generates the entire nondominated set of a discrete tricriteria optimization problem by solving at most $3 |\NS|-2$ subproblems, if $|\NS|\geq 3$ and if bounds on the set of feasible outcomes are given. 
Thereby, to the best of our knowledge, a linear bound with respect to the number of nondominated points is given for the first time for tricriteria problems. 
Our algorithm does not depend on a specific scalarization but can be used with any scalarization method that is suited for discrete and, in general, non-convex problems. 
Our method is also applicable if a subset of nondominated points is already known and the search region potentially containing further nondominated points shall be generated.

The remainder of this paper is organized as follows: 
In Section~\ref{sec:fullsplit} we present a decomposition of the search region based on nondominance and develop a first generic box algorithm. 
We show that this generic algorithm may produce redundant boxes which makes the algorithm inefficient. 
Under the technical assumption that all nondominated points differ pairwise in every component,  we show in Section~\ref{sec:vsplit} how to
construct a decomposition in the tricriteria case that only contains non-redundant boxes. 
The number of boxes is proven to be at most $3 |\NS|-2$ for $|\NS| \geq 3$. 
Finally, we show that the algorithm can also be applied if the nondominated points are in arbitrary position, i.e., every pair of points may have up to $m-2$ equal components. 
The upper bound $3 |\NS|-2$ is also valid in this general case.
In Section~\ref{sec:econstr} we demonstrate that the upper bound can be improved to $2 |\NS|-1$ when an \ecm is used as scalarization.
In Section~\ref{sec:num} we present numerical results.


\section{Split of the search region for multicriteria problems} \label{sec:fullsplit}

Let $B_{0}$ denote an initial search region of the form
$$
B_{0}:=\{z \in \R^m \, : \, l_{j} \leq z_{j} < u_{j}, \, \jtk\}
$$ 
with $l,u \in \R^m, l \leq u$.
As lower and upper bound of $B_{0}$ we choose a global lower and upper bound on the nondominated set, for example, $l:=z^I$ and $u:=z^M$, where $z^I$ is the ideal point and $z^M_{j}:=\max \{z_{j} \, : \, z \in Z \} + \delta$ for all $\jtk$ with $\delta>0$ is an upper bound on the set $Z$. 
If no special scalarization method is employed, the iterative reduction of the search region can solely be based on nondominance. Thereby, every generated nondominated point allows to restrict the search region, as, by Definition~\ref{def:nondominance}, 
for any $z^* \in \NS,$ the two sets
$$
S_{1} (z^*):= \{z \in B_{0} \, : \, z \leqq z^* \}  \quad \textnormal{and} \quad
S_{2} (z^*):= \{z \in B_{0} \, : \, z \geqq z^* \}  
$$
do not contain any nondominated points besides $z^*$, i.e., $S_{1}(z^*)\cap \NS = S_{2}(z^*)\cap \NS = \{z^* \}$. 
Moreover, $S_{1}(z^*) \cap Z=\{z^* \}$, thus, $S_{1}(z^*)\backslash \{z^* \}$ contains no feasible points.

In the following, we decompose a given initial search region 
$B_{0}$ iteratively into subsets $B \subset B_{0}$ of the same form, i.e., into sets 
$B:=\{z \in \R^m \, : \, l_{j} \leq z_{j} < u_{j}', \, \jtk\}$ 
with $u' \in \R^m$, $l \leq u' \leq u$. 
As the initial search region that potentially contains nondominated points of \eqref{optprob} as well as each subset $B$ as defined above describe rectangular subsets of $\R^m$ with sides parallel to the coordinate axes, we call these sets {\emph{boxes} in the following. 
The search region is always represented as the union of certain boxes $B.$ With the generation of every new nondominated point we replace some of the boxes of the current  search region by appropriate new boxes such that the whole search region is covered. 
This property is called correctness in the following. 
\begin{definition}[Correct decomposition] \label{def:correct}
Let $B_{0}$ denote the starting box, let $\mathcal{B}_{s}$ denote the set of boxes at the beginning of iteration $s \geq 1$, where $\mathcal{B}_{1}:=\{B_{0}\}$, and let $z^p \in \NS$, $p=1,\dots,s-1,$ be already determined nondominated points.
We call $\mathcal{B}_{s}$ correct with respect to $z^1,\dots,z^{s-1}$, if 
\begin{equation} \label{eq:correct}
B_{0} \; \backslash \left( \bigcup_{B \in \mathcal{B}_{s}} B \right) =  \bigcup_{p=1,\dots,s-1} {S}_{2}(z^p)
\end{equation}
holds, where ${S}_{2}(z^p):=\{z \in B_{0} : z \geqq z^p \}$
denotes that subset of the box $B_0$ that is dominated by the point $z^p\in \NS, p=1,\dots,s-1$.
\end{definition}
Any split presented in the following maintains a correct decomposition of the search region at any time. Under this basic condition, we try to generate as few boxes as possible, as for every generated box a scalarized subproblem needs to be solved. 
Our aim is to keep the number of subproblems low.
The simplest split decomposes a box $B$ which contains a new outcome $z^* \in (B \cap \NS)$  into $m$ subboxes.

\begin{definition}[Full $m$-split] \label{def:msplit}
Let a nondominated point $z^* \in (B \cap \NS)$ be given. 
We call the replacement of $B$ by the $m$ sets
\begin{equation} \label{eq:fullsplit}
B_{i}:=\{z \in B \, : \, z_{i} < z_{i}^* \}  \; \forall \, \itk 
\end{equation}
a full $m$-split of $B$. 
\end{definition}
Note that similar decomposition approaches are proposed, e.g., in \citet{tenfelde03}, \citet{dhaenens10} and \citet{przybylski10}.
Recursively applying the full $m$-split to every box which contains the current nondominated point yields a correct decomposition, as the following lemma shows.

\begin{lemma}[Correctness of the full $m$-split] \label{lem:correctmsplit}
Let $\mbs, s \geq 1,$ with $\mathcal{B}_{1}:=\{B_{0}\}$ be a correct decomposition with respect to the nondominated points $z^1,\dots,z^{s-1}$, and let $z^s \in \NS$. 
If a full $m$-split is applied to all boxes $B \in \mbs$ with $z^s \in B$, then the resulting decomposition is correct. 
\end{lemma}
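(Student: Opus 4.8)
The plan is to track the region covered by the current boxes rather than the boxes individually. Write $U_{s}:=\bigcup_{B \in \mbs} B$ for the search region before the split and $U_{s+1}:=\bigcup_{B \in \mbss} B$ for the region afterwards, where $\mbss$ is obtained by replacing every box $B \in \mbs$ with $z^s \in B$ by its full $m$-split and keeping all other boxes unchanged. The correctness condition~\eqref{eq:correct} to be verified for $\mbss$ is $B_{0}\backslash U_{s+1}=\bigcup_{p=1,\dots,s} S_{2}(z^p)$, whereas the hypothesis supplies $B_{0}\backslash U_{s}=\bigcup_{p=1,\dots,s-1} S_{2}(z^p)$. Hence it suffices to establish the single identity $U_{s+1}=U_{s}\backslash S_{2}(z^s)$; the claim then drops out of the elementary computation $B_{0}\backslash\left(U_{s}\backslash S_{2}(z^s)\right)=\left(B_{0}\backslash U_{s}\right)\cup S_{2}(z^s)$ together with $S_{2}(z^s)\subseteq B_{0}$.

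To prove $U_{s+1}=U_{s}\backslash S_{2}(z^s)$ I would argue box by box, relying on two observations. First, for a box $B \in \mbs$ containing $z^s$, a point $z \in B$ lies in none of the split pieces $B_{i}=\{z \in B : z_{i}<z_{i}^s\}$ exactly when $z_{i}\geq z_{i}^s$ for all $\itk$, i.e. when $z \geqq z^s$; consequently $\bigcup_{i=1}^{m} B_{i}=B\backslash S_{2}(z^s)$. Second, for a box $B \in \mbs$ with $z^s \notin B$ I would show $B \cap S_{2}(z^s)=\emptyset$, so that the untouched box likewise equals $B\backslash S_{2}(z^s)$. Granting both, every box of $\mbs$ contributes precisely $B\backslash S_{2}(z^s)$ to $U_{s+1}$, and distributing the set difference over the union gives $U_{s+1}=\bigcup_{B \in \mbs}\left(B\backslash S_{2}(z^s)\right)=U_{s}\backslash S_{2}(z^s)$, as needed.

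The step I expect to require the most care is the disjointness $B \cap S_{2}(z^s)=\emptyset$ for boxes not containing $z^s$, and this is exactly where the structural restriction that all boxes share the common lower bound $l$ is indispensable. Writing $B=\{z : l_{j}\leq z_{j}<u_{j}', \, \jtk\}$ and recalling that $l$ is a lower bound on the nondominated set, so that $l \leqq z^s$, any $z \in B \cap S_{2}(z^s)$ would satisfy $l_{j}\leq z_{j}^s \leq z_{j}<u_{j}'$ for every $j$, forcing $z^s \in B$. Thus the shared anchor $l$ yields the equivalence $B \cap S_{2}(z^s)\neq\emptyset \iff z^s \in B$, which I would isolate as the key lemma: it guarantees that leaving the boxes without $z^s$ untouched cannot retain any part of the newly dominated region $S_{2}(z^s)$. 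Without this common lower bound an untouched box could overlap $S_{2}(z^s)$, and correctness would break down---so flagging and proving this equivalence is the heart of the argument.
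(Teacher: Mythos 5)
Your proof is correct and follows essentially the same route as the paper's: the paper's inductive step likewise reduces to showing that the union of the new boxes replacing each split box equals $B \setminus S_{2}(z^s)$ and then applies the same set identity $B_{0} \setminus \left( U_{s} \setminus S_{2}(z^s) \right) = \left( B_{0} \setminus U_{s} \right) \cup S_{2}(z^s)$. Your explicit verification that an untouched box is disjoint from $S_{2}(z^s)$ (via the shared lower bound $l \leqq z^s$) makes precise a step the paper passes over silently when it merges the unchanged and split boxes into a single union, so it is a welcome clarification rather than a divergence.
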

\begin{proof} By induction on $s$.\\
$\underline{s=1}:$ Let $\mathcal{B}_{1}:=\{B_{0}\}$, $z^1 \in \NS$. 
Then, by definition of the full $m$-split, $B_{0}$ is replaced by $m$ boxes. It holds that
\[
B_{0} \; \backslash \left( \bigcup_{B \in \mathcal{B}_{2}} B \right) =
B_{0} \; \backslash \left( \bigcup_{\itk} \{z \in B_{0} \, : \, z_{i} < z_{i}^1 \} \right) = {S}_{2}(z^1),
\]
thus, $\mathcal{B}_{2}$ is correct.\\
\noindent
$\underline{s\to s+1}:$ Let $\mathcal{B}_{s}$ be correct and let $z^s \in \NS$. Let $\mbsb \subset \mbs$ denote the set of all boxes $B \in \mbs$ for which $z^s \in B$ holds. 
Let $I$ be the index set of these boxes and let $Q:=|\mbsb|$. 
Now, let a full $m$-split with respect to $z^s$ be applied to all $B \in \mbsb$, i.e., each of the boxes $B^{I(q)}, q=1,\dots,Q$, is replaced by $m$ new boxes $B^{I(q)}_{1},\dots,B^{I(q)}_{m}, q=1,\dots,Q$
and 
$$\bigcup_{\substack{\itk\\ q=1,\dots,Q}} B_{i}^{I(q)}    
=\bigcup_{B \in \mbsb} B \; \backslash \; S_{2}(z^s)
$$
holds. Then
\begin{eqnarray*}
\lefteqn{B_{0} \; \backslash \left( \bigcup_{B \in \mbss} B \right) 
 =B_{0} \; \backslash \left( 
\left( \bigcup_{B \in \mbs \backslash \mbsb} B \right) \cup
\left( \bigcup_{\substack{\itk\\ q=1,\dots,Q}} B_{i}^{I(q)} \right)   
\right)}\\
&\!\!=\!\!&\! B_{0} \; \backslash \left( 
\left( \bigcup_{B \in \mbs \backslash \mbsb} B \right) \cup
\left( \bigcup_{B \in \mbsb} B \; \backslash \; S_{2}(z^s) \right)   
\right)
= B_{0} \; \backslash \left( 
\left( \bigcup_{B \in \mbs} B \; \backslash \; S_{2}(z^s) \right)   
\right)\\
&\!\!=\!\!& \left( B_{0} \; \backslash \left( \bigcup_{B \in \mbs} B  \right) \right) \cup {S}_{2}(z^s)
= \bigcup_{p=1,\dots,s} \!\!\! {S}_{2}(z^p). 
\end{eqnarray*}
\qed 
\end{proof}
Note that all new boxes $B \in \mbss, s \geq 2$, obtained from boxes in $\mbsb$, are defined as sets with open upper boundary, as we need to exclude $z^{s}$ from the search region in order to prevent it from further generation. 
In practical applications, it will often be useful to replace the boxes by closed subsets and exclude $z^s$ by using, for example, appropriate scalarization approaches.

Also note that we describe the boxes by their upper bound $u$ only and that the lower bound of all boxes is kept constant. 
This means that our decomposition contains the union of the sets $S_{1}(z^p) \backslash \{ z^p \}, 1\leq p \leq s$, for all nondominated points $z^p \in \NS$ which have already been generated by the algorithm, even if these sets do not contain any feasible points.
However, the splitting operation is simplified by including these sets, since a box is never split into more than $m$ new boxes in this case.

%
%
\subsection{A generic algorithm based on the full $m$-split}
\begin{algorithm}
\caption{Algorithm with full $m$-split} \label{table:algofullsplit}
\begin{algorithmic}[1]
\INPUT Image of the feasible set $Z \subset \R^m$, implicitly given by some problem formulation
\State $\NS:=\emptyset$; $\delta >0$;
\State \Call{InitStartingBox}{$Z, \delta$};
\State $s:=1;$ \Comment{Initialize starting box}
\While {$\mbs \neq \emptyset$} 
\State Choose $B \in \mbs;$ 
\State $z^{s}:=opt(Z,u(B))$; \Comment{Solve subproblem}
\If {$z^{s} = \emptyset$} \Comment{Subproblem infeasible}
\State $\mbss:=\mbs \backslash \{ B\}$; \Comment{Remove (empty) box}
\Else 
\State $\NS:=\NS \cup \{z^s\}$; \Comment{Save nondominated point}
\State $\mbss:=\mbs;$ \Comment{Copy set of current boxes}
\State \Call{GenerateNewBoxes}{$\mbs,z^s,z^I,\mbss$};
\EndIf
\State $s:=s+1$; 
\EndWhile
\OUTPUT Set of nondominated points $\NS$
\Statex
\Procedure{InitStartingBox}{$Z, \delta$}
\For {$j=1$ to $m$} \Comment{Compute bounds on $Z$}
\State $z^I_{j}:=\min \{z_{j} \, : \, z \in Z \}$;  
\State $z^M_{j}:=\max \{z_{j} \, : \, z \in Z \} + \delta$; 
\State $u_{j}(B_{0}):=z^M_{j}$;
\EndFor
\State $\mathcal{B}_{1}:=\{B_{0}\};$ \Comment{Initialize set of boxes}
\State \Return $\mathcal{B}_{1}$
\EndProcedure
\Statex
\Procedure{GenerateNewBoxes}{$\mbs,z^s,z^I,\mbss$}
\ForAll {$\hB \in \mbs$}
\If {$z^s<u(\hat{B})$} \Comment{Point is contained in box}
\For {$i=1$ to $m$} \Comment{Apply full $m$-split to $\hB$}
\If {$z_{i}^s>z_{i}^I$}
\State $u(\hB_{i}):=u(\hB)$; \Comment{Create a copy of $\hB$} 
\State $u_{i}(\hB_{i}):=z_{i}^s;$ \Comment{Update upper bound} 
\State $\mbss:=\mbss \cup \{\hB_{i} \};$ \Comment{Append new box} 
\EndIf
\EndFor
\State $\mbss:=\mbss \backslash \{\hB\};$ \Comment{Remove box}
\EndIf 
\EndFor
\State \Return $\mbss$
\EndProcedure
\end{algorithmic}
\end{algorithm} 
Algorithm~\ref{table:algofullsplit} shows a basic algorithm using the full $m$-split.
Due to Lemma~\ref{lem:correctmsplit}, the algorithm is correct as it does not exclude regions from the search region which might contain further nondominated points. 
A problem formulation is given as input, which is denoted by $Z$. 
Note that this does not mean that the set of feasible outcomes is known explicitly, but it is to be understood as a substitute for the objective functions and the constraints. 
As long as $\mbs$ contains unexplored boxes, a box $B$ is selected according to some specified rule.
As we are interested in generating the entire nondominated set, no special rule is employed in the following, and we may, for example, always take the first box in the list $\mbs$. 
The upper bounds of the chosen box $B$ are used to determine the parameters of the selected scalarization. 
Thereby, the scalarization method can be chosen freely, as long as it is guaranteed that the method finds a nondominated point in $B$ whenever there exists one. 
For example, the augmented weighted Tchebycheff scalarization is an appropriate method, see, e.g., \cite{daechert12} and \cite{daechert14} for an adaptive parameter choice in the bicriteria and multicriteria case, respectively. 
The result of the subproblem is either a nondominated point~$z^{s}$ in the considered box~$B$ or the detection of infeasibility,
which corresponds to the situation that $B$ does not contain further nondominated points. 
In the latter case, $B$ is removed from the list $\mbs$ and the iteration is finished. Otherwise, $z^s$ is saved and all boxes $\hB \in \mbs$ are identified that contain $z^s$. 
All these boxes are split with respect to all $i \in \{1,\dots,m\}$ for which $z^s_{i}>z_{i}^I$ holds and are replaced by the new boxes. 
The algorithm iterates until all boxes have been explored. Then the entire nondominated set has been detected.

%
\subsection{Bicriteria case}

For $m=2$, Algorithm~\ref{table:algofullsplit} is not only correct but also efficient, in the sense that the number of subproblems that need to be solved depends linearly on the number of nondominated points. 
As the decomposition does not contain redundant boxes, an upper bound on the number of boxes can easily be derived, which can be seen as follows.  
Let $B_0$ denote the starting box and let $z^1\in B_0\cap\NS$ be the first generated point. 
Consider the two new boxes $B_{1},B_{2}$ replacing $B_{0}$ in the first iteration. It holds that
\begin{align*}
B_{1} \cap Z & =\{z \in B_{0} \, : \, z_{1} < z_{1}^1 \} \cap Z =
\left( \{z \in B_{0} \, : \, z_{1} < z_{1}^1 \} \cap Z \right) \backslash \, S_{1}(z^1) \\ 
& =\{z \in B_{0} \, : \, z_{1} < z_{1}^1, z_{2}>z_{2}^1 \} \cap Z
\intertext{and, analogously,}
B_{2} \cap Z &=\{z \in B_{0} \, : \, z_{2} < z_{2}^1 \} \cap Z =
\{z \in B_{0} \, : \, z_{2} < z_{2}^1, z_{1}>z_{1}^1 \} \cap Z,
\end{align*}
thus, $(B_{1} \cap Z) \cap (B_{2} \cap Z) = \emptyset$. 
Therefore, the second generated point $z^2 \in \NS$ is contained in exactly one of the two boxes $B_{1},B_{2}$. 
This box is again split into two new boxes whose intersections with $Z$ are disjoint among themselves as well as from the box (intersected with $Z$) which has not been changed in the current iteration. 
Repeating this argument, we see that for $m=2$, no redundancy occurs. 
Therefore, we can easily indicate the running time of Algorithm~\ref{table:algofullsplit} in the bicriteria case based on the knowledge that a new nondominated point lies in exactly one box. 
In the initialization phase, $z^I$ and $z^M$ are computed in order to define~$B_{0}$. 
In every iteration, either a (new) nondominated point is generated or a box is discarded from the search region. 
For every new nondominated point $z^s>z^I$, two new boxes replace the currently investigated box, and for each of the two lexicographic optimal points (defining the ideal point) the current box is replaced by one new box. 
So, the total number of iterations is $2 |\NS|-1$, cf.\ \citet{chalmet86} and \citet{ralphs06}.  
In Figure~\ref{fig:2d}, we illustrate the search region after having generated and inserted four nondominated points $z^{1}, z^{2}, z^{3}$ and $z^4$. 
If we assume that these points build the entire nondominated set, Algorithm~\ref{table:algofullsplit} terminates after seven iterations.

\begin{figure}
\centering
\includegraphics[width=.4\textwidth]{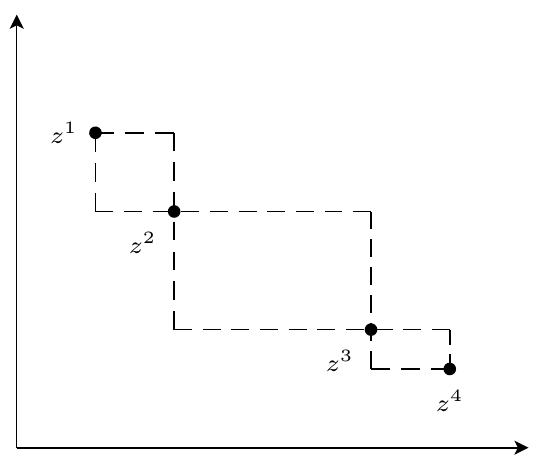} 
\caption{Decomposition of the search region for $m=2$}
\label{fig:2d}
\end{figure}

\subsection{Multicriteria case ($m \geq 3$)}

The application of 
the full $m$-split in the case of three criteria is illustrated in Figure~\ref{fig:genericsplit}. 
Different from the bicriteria case, 
a nondominated point may lie in the intersection of multiple boxes for $m \geq 3$. 
If we perform the full $m$-split in every box which contains the current nondominated point, we typically create nested and, thus, redundant subboxes. 
This is illustrated in the following example.
\begin{example} \label{ex:split}
Let $m=3$ and let the initial search region be given by
$$
B_0:=\{z \in Z \, : \, 0 \leq z_{i} \leq 5 \; \forall \, i=1,2,3 \}.
$$
Assume that the first nondominated point that is generated is $z^{1}=(2,2,2)^{\top}$. 
Performing a full $3$-split in $B_0$ with respect to $z^{1}$ replaces the search region $B_{0}$ by the three sets
$$
B_{1,i}:=\{z \in B_{0} \, : \, z_{i} < 2 \}, \; i=1,2,3. 
$$
Let $z^{2}=(1,1,4)^{\top}$ be the next nondominated point that is generated. 
It holds that $z^{2} \in B_{11}$ as well as $z^{2} \in B_{12}$, but $z^{2} \notin B_{13}$. 
Performing a full $3$-split in $B_{11}$ yields
\begin{align*}
B_{21} & :=\{z \in B_{0} \, : \, z_{1} < 1 \},\\
B_{22} & :=\{z \in B_{0} \, : \, z_{1} < 2, z_{2}<1 \},\\
B_{23} & :=\{z \in B_{0} \, : \, z_{1} < 2, z_{3}<4 \}.
\end{align*}
Performing a full $3$-split in $B_{12}$ yields
\begin{align*}
B_{21}' & :=\{z \in B_{0} \, : \, z_{1} < 1, z_{2}<2 \},\\
B_{22}' & :=\{z \in B_{0} \, : \, z_{2}<1 \},\\
B_{23}' & :=\{z \in B_{0} \, : \, z_{2} < 2, z_{3}<4 \}.
\end{align*}
It holds that $B_{21}' \subset B_{21}$ and $B_{22} \subset B_{22}'$, thus, the boxes $B_{22}$ and $B_{21}'$ are redundant in the decomposition of $B_{0}$. 
\end{example}

If redundant boxes are kept in the decomposition, this typically increases the running time of the algorithm, as additional, unnecessary subproblems are solved. 
Depending on the given problem, this may be time-consuming. 
Thus, redundant boxes should be detected and removed immediately. 
In the following, we analyze under which conditions redundant boxes occur. 
We first define our notion of non-redundancy.

\begin{figure}
\centering
\includegraphics[width=.9\textwidth]{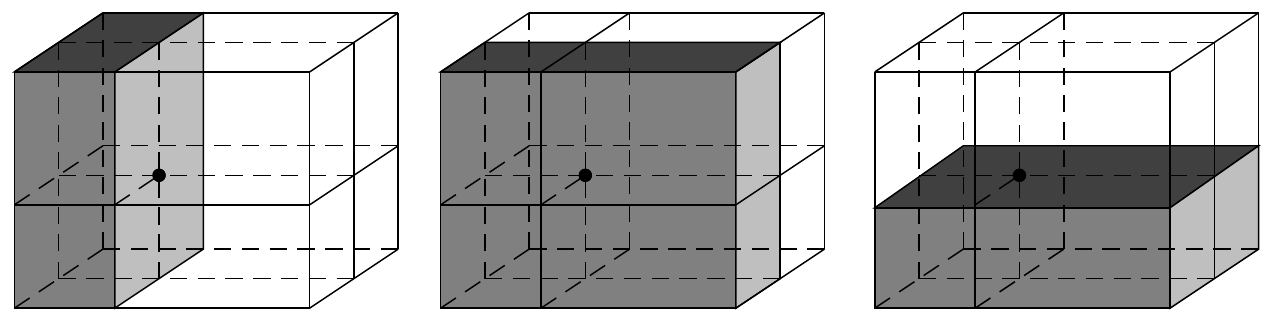}
\caption{Boxes $B_{i},i=1,2,3$, in $\R^3$ obtained by a full $3$-split of the initial search region; the nondominated point with respect to which the split is performed is represented by a dot}
\label{fig:genericsplit}
\end{figure}

\begin{definition}[Non-redundant decomposition] \label{def:nonred1}
Let $B_{0}$ denote the starting box and let $\mathcal{B}_{s}$ be a correct decomposition at the beginning of iteration $s \geq 1$. 
We call $\mbs$ (and every $B \in \mbs$) non-redundant, if for every pair of boxes $B, \tilde{B} \in \mbs, B \neq \tilde{B}$, it holds: 
$$\exists \, i \in \{1,\dots,m\} : u_{i}(B) < u_{i}(\tB) \; \text{and} \; \exists \, j \in \{1,\dots,m\} :
u_{j}(B) > u_{j}(\tB).$$
In case that $u(B) \leq u(\tB)$ we say that box $\tB$ dominates $B$. 
\end{definition}

Note that the definition of a dominated box is somehow opposite to the definition of a dominated point. While $u \in \R^m$ is dominated by 
$u' \in \R^m$ if $u \geq u'$, box $B$ is dominated by $B'$ if $u(B) \leq u(B')$. 

For simplicity, we make a technical assumption concerning the values of the nondo\-minated points that  will be removed later. 
Moreover, we define our general setting.
\begin{assumption} \label{assump:znew}
Let the following hold:
\begin{enumerate}
\item For all nondominated points $z^p \in \NS, p=1,\dots,s$, generated up to iteration $s \geq 1$, it holds that $z_{j}^p \neq z_{j}^q$ for all $\jtk$ and $1 \leq q < p$.
\item The starting box $B_{0}$ is non-empty, and $\mathcal{B}_{1}:=\{ B_{0}\}$ denotes the initial decomposition of the search region. 
\item 
For every $1 \leq p \leq s$, $\mathcal{B}_{p}$ is a correct, non-redundant decomposition of the search region. 
By $\mbsp := \{ B \in \mathcal{B}_{p} : z^p \in B\}$ we denote the subset of boxes in iteration $p$ containing $z^p$.
\end{enumerate}
\end{assumption}

\begin{lemma}[Generation of redundant boxes] \label{lem:genredbox}
Let Assumption~\ref{assump:znew} be satisfied.
If we apply a full $m$-split to every box $B \in \mbsb$, then redundancy can only occur among the `descendants' of two different boxes which have been split with respect to the same component in this iteration. 
\end{lemma}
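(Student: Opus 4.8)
The plan is to prove the statement structurally via a complete case distinction on pairs of boxes in the new decomposition $\mbss$: I classify every pair of distinct boxes and show that every pair \emph{except} those formed by descendants of two different parents split with respect to a common component is non-redundant in the sense of Definition~\ref{def:nonred1}. Since a descendant $B_i$ obtained from a parent $B$ by the full $m$-split satisfies $u_i(B_i)=z_i^s$ and $u_k(B_i)=u_k(B)$ for $k\neq i$, all comparisons reduce to inspecting upper-bound coordinates. A preliminary observation supplies the strictness everywhere: each coordinate $u_j(B)$ of any box in $\mbs$ equals either $z_j^M$ or $z_j^p$ for some earlier point $z^p$, $p<s$, so by Assumption~\ref{assump:znew}(1) we have $z_j^s\neq u_j(B)$ for all $j$ and all $B\in\mbs$. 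For each parent $B\in\mbsb$ containing $z^s$ this sharpens membership to $z_j^s<u_j(B)$ for all $j$.

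I first dispose of the two easy types. Two boxes that are both left unchanged, i.e.\ lie in $\mbs\setminus\mbsb$, were non-redundant in $\mbs$ by Assumption~\ref{assump:znew}(3) and remain so. Two descendants $B_i,B_{i'}$ with $i\neq i'$ of the \emph{same} parent $B$ are non-redundant, since comparing coordinate $i$ gives $u_i(B_i)=z_i^s<u_i(B)=u_i(B_{i'})$ while coordinate $i'$ gives $u_{i'}(B_i)=u_{i'}(B)>z_{i'}^s=u_{i'}(B_{i'})$, so neither dominates the other.

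Next come the two mixed types. Take an unchanged box $\tilde B\in\mbs\setminus\mbsb$ and a descendant $B_i$ of $B\in\mbsb$; using non-redundancy of $B,\tilde B$ in $\mbs$, choose $b$ with $u_b(B)<u_b(\tilde B)$, so that either $u_b(B_i)=u_b(B)<u_b(\tilde B)$ (if $b\neq i$) or $u_i(B_i)=z_i^s<u_i(B)<u_i(\tilde B)$ (if $b=i$), showing $B_i$ does not dominate $\tilde B$; conversely, $z^s\notin\tilde B$ yields a coordinate $k$ with $z_k^s\geq u_k(\tilde B)$, which the preliminary observation upgrades to $z_k^s>u_k(\tilde B)$, whence $u_k(B_i)=u_k(B)>z_k^s>u_k(\tilde B)$ (if $k\neq i$) or $u_i(B_i)=z_i^s>u_i(\tilde B)$ (if $k=i$), showing $\tilde B$ does not dominate $B_i$. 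Finally, for descendants $B_i$ and $\tilde B_{i'}$ of \emph{different} parents $B\neq\tilde B$ split with respect to \emph{different} components $i\neq i'$, comparing coordinate $i$ gives $u_i(B_i)=z_i^s<u_i(\tilde B)=u_i(\tilde B_{i'})$ and coordinate $i'$ gives $u_{i'}(B_i)=u_{i'}(B)>z_{i'}^s=u_{i'}(\tilde B_{i'})$, both strict because $z^s$ lies in both parents, so neither dominates the other.

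Having excluded all four types, the only remaining source of redundancy is a pair of descendants of two different parents split with respect to the same component, which is exactly the claim. I expect the main obstacle to be the mixed type, specifically the direction showing that the unchanged box $\tilde B$ cannot dominate the new box $B_i$: this is the one place where the non-strict relation $z_k^s\geq u_k(\tilde B)$ arising from $z^s\notin\tilde B$ must be strengthened to a strict inequality, and it is precisely here that the coordinate-distinctness hypothesis of Assumption~\ref{assump:znew}(1) is indispensable rather than merely convenient.
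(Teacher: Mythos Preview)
Your proof is correct and follows essentially the same approach as the paper's own proof: both establish the claim via a complete case distinction on pairs of boxes in $\mbss$, ruling out redundancy for (i) two unchanged boxes, (ii) an unchanged box paired with a descendant, and (iii) two descendants obtained from splits with respect to different components, leaving only descendants of different parents split with respect to the same component as the possible source of redundancy. Your presentation differs only cosmetically---you separate the same-parent and different-parent subcases of (iii), whereas the paper treats them together, and you front-load the strictness argument $z_j^s\neq u_j(B)$ as a preliminary observation, whereas the paper invokes Assumption~\ref{assump:znew}(1) at the single point where it is needed---but the substance is identical.
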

\begin{proof}
We first show that no redundancy occurs between two boxes if at least one of the boxes has not been changed in the current iteration. Therefore, consider two arbitrary boxes $B, \tB \in \mbs$ where $\tB \in \mbs \backslash \mbsb$:
\begin{enumerate}
\item If $B \in \mbs \backslash \mbsb$, both boxes remain unchanged in the current iteration and, thus, due to Assumption~\ref{assump:znew}~(3) are non-redundant. 
\item If $B \in \mbsb$, none of the boxes obtained from a split in $B$ can dominate $\tB$, as $B$ does not dominate $\tB$ and the upper bound of $B$ is only decreased by the split.
Conversely, $\tB$ cannot dominate any of the boxes obtained from a split in $B$, 
since $\tB \in \mbs \backslash \mbsb$ implies that $u_{j}(\tB) \leq z^s_{j}$ for at least one $j\in\{1,\dots,m\}$. 
As $z^s<u(B)$, for every $B_{i}, \itk$, resulting from a split of~$B$ it holds that $z^s_{i} = u_{i}(B_{i})$ and $z^s_{j} < u_{j}(B_{i})$ for all $j \neq i$. Thus, $\tB$ dominates $B_{i}$ if and only if $z^s_{i}=u_{i}(\tB)$ holds. This, however, is excluded by Assumption~\ref{assump:znew}~(1).
\end{enumerate}
Therefore, redundancy can only occur among newly generated boxes. Consider two boxes $B_{i} \neq \hB_{j}$ obtained from $B,\hB \in \mbsb$ (the case $B=\hB$ is included) that are  split with respect to components $i \neq j$.  
Then $u_{i}(B_{i})=z^s_{i}<u_{i}(\hB_{j})$ and $u_{j}(B_{i}) > z^s_{j}=u_{j}(\hB_{j})$, thus, none of the boxes can dominate the other one. 
It follows that redundancy can only occur among the descendants of two different boxes split with respect to the same component.
\qed 
\end{proof}
\begin{corollary} \label{cor:oneboxsplit}
Let Assumption~\ref{assump:znew} hold.
If only one box is split in some iteration, then all $m$ resulting subboxes are non-redundant.
In particular, the boxes obtained in the first iteration are always non-redundant.
\end{corollary}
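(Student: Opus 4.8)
The plan is to derive this corollary directly from Lemma~\ref{lem:genredbox}, since the hypothesis ``only one box is split'' is precisely what removes the single source of redundancy identified there. First I would observe that splitting only one box in an iteration means $|\mbsb|=1$, say $\mbsb=\{B\}$. The full $m$-split replaces $B$ by the subboxes $B_1,\dots,B_m$, and I must argue that the resulting decomposition $\mbss$ contains no dominated box, so that in particular each of the $m$ new subboxes is non-redundant.

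To establish non-redundancy of the whole decomposition I would check all three types of pairs that can arise in $\mbss$. Pairs consisting of two unchanged boxes (both in $\mbs\setminus\mbsb$) are non-redundant by Assumption~\ref{assump:znew}~(3), since $\mbs$ is already non-redundant. Pairs consisting of one new subbox $B_i$ and one unchanged box are non-redundant by the first part of the proof of Lemma~\ref{lem:genredbox}. Finally, for two new subboxes $B_i\neq B_j$, I would invoke the concluding argument of that same lemma: $B_i$ and $B_j$ are split with respect to distinct components $i\neq j$, so that $u_i(B_i)=z^s_i<u_i(B_j)$ and $u_j(B_i)>z^s_j=u_j(B_j)$, and hence neither box dominates the other. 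This exhausts all pairs because, with $|\mbsb|=1$, there are no descendants of two different split boxes, which is exactly the configuration Lemma~\ref{lem:genredbox} singles out as the only possible origin of redundancy.

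For the ``in particular'' statement I would note that the first iteration begins with $\mathcal{B}_1=\{B_0\}$, a decomposition consisting of a single box, which is trivially correct and non-redundant, so that Assumption~\ref{assump:znew} is satisfied for $p=1$. Since the first generated point $z^1$ necessarily lies in $B_0$, exactly one box is split, and the first part applies verbatim to yield that all boxes of $\mathcal{B}_2$ are non-redundant.

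I do not expect a genuine obstacle here: the corollary is essentially a specialization of Lemma~\ref{lem:genredbox} to the case $|\mbsb|=1$. The only point requiring mild care is to remember that non-redundancy is a statement about \emph{every} pair of boxes in the full updated decomposition, and not merely about the new subboxes among themselves, so that the ``mixed'' and ``unchanged'' pairs must be covered explicitly rather than silently assumed.
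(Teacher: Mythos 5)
Your proposal is correct and follows essentially the same route as the paper, which states this corollary without a separate proof precisely because it is an immediate consequence of Lemma~\ref{lem:genredbox}: with $|\mbsb|=1$ there are no descendants of two \emph{different} split boxes, which is the only configuration the lemma identifies as a possible source of redundancy. Your explicit check of the mixed and unchanged pairs, and of the base case $\mathcal{B}_1=\{B_0\}$, just spells out what the paper leaves implicit.
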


\begin{corollary} \label{cor:nonred}
Let Assumption~\ref{assump:znew} hold. Let two boxes $B, \hB \in \mbsb$ be split with respect to the same component $\itk$. Then the resulting boxes $B_{i},\hB_{i}$ are non-redundant if and only if there exists an index 
$p \neq i$ such that $u_{p}(B_{i}) < u_{p}(\hB_{i})$ and there exists an index $q \neq i$ such that $u_{q}(B_{i}) > u_{q}(\hB_{i})$. 
\end{corollary}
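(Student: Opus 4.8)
The plan is to read off Corollary~\ref{cor:nonred} directly from the definition of the full $m$-split together with Definition~\ref{def:nonred1}, since the assertion is essentially a specialization of the non-redundancy criterion to two boxes that agree in their $i$-th upper bound.

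First I would record the effect of the split on the upper bounds. By Definition~\ref{def:msplit}, splitting $B$ with respect to component $i$ produces $B_i = \{z \in B : z_i < z_i^s\}$, so that $u_i(B_i) = z_i^s$ and $u_j(B_i) = u_j(B)$ for every $j \neq i$; the same holds for $\hB_i$ obtained from $\hB$. The decisive observation is then that $u_i(B_i) = z_i^s = u_i(\hB_i)$, i.e., the two descendant boxes carry the identical $i$-th upper bound.

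Next I would apply Definition~\ref{def:nonred1} to the pair $B_i, \hB_i$. Non-redundancy requires an index $k$ with $u_k(B_i) < u_k(\hB_i)$ and an index $l$ with $u_l(B_i) > u_l(\hB_i)$. Because the $i$-th components coincide, both of the strict inequalities $u_i(B_i) < u_i(\hB_i)$ and $u_i(B_i) > u_i(\hB_i)$ are impossible, so neither $k$ nor $l$ can equal $i$; renaming $k =: p$ and $l =: q$ yields exactly the condition stated in the corollary. For the converse direction, any indices $p, q \neq i$ satisfying the two strict inequalities serve directly as the witnesses $k, l$ in Definition~\ref{def:nonred1}, establishing non-redundancy. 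This gives the claimed equivalence.

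I do not expect a genuine obstacle here: the statement is a near-immediate corollary of the split operation. The only point requiring a moment's care is to note explicitly that the shared $i$-th component forces the non-redundancy witnesses into the index set $\{1,\dots,m\} \setminus \{i\}$, which is precisely what turns the general criterion of Definition~\ref{def:nonred1} into the reduced one of the corollary.
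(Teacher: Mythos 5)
Your proof is correct and follows exactly the reasoning the paper relies on: the paper states this corollary without a separate proof, treating it as an immediate consequence of the fact (established in Lemma~\ref{lem:genredbox}) that both descendants carry the identical $i$-th upper bound $z_i^s$, so the non-redundancy witnesses of Definition~\ref{def:nonred1} are forced into $\{1,\dots,m\}\setminus\{i\}$. Your explicit unpacking of that observation is exactly what is intended.
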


These observations allow to detect redundant boxes by checking specific boxes of the current decomposition. 
Translating this to an algorithm for arbitrary $m \geq 3$, we apply, in every iteration, a full $m$-split to every box containing the current nondominated point. 
If more than one box is split in one iteration, we compare the upper bounds of those new boxes that were generated with respect to the same component pairwise to detect redundancy. The respective boxes are then removed from the decomposition. 
A similar approach is followed in \citet{przybylski10} where a new upper bound is compared to all other upper bounds excluding the one from which it has been derived. In doing so, `dominated' upper bounds are filtered out directly. 

A corresponding algorithm can be improved further if 
redundant boxes are identified without comparing their upper bounds to some or all other boxes. 
In the next section we develop an explicit criterion for tricriteria problems that indicates already before the split is performed whether the resulting box is redundant or not, and, thus, allows to maintain only non-redundant boxes in the decomposition. 
We prove that the number of non-redundant boxes or, equi\-valently, the number of subproblems to be solved in the course of an algorithm based on such an improved split operation depends linearly on the number of nondominated points. 


\section{A split criterion to avoid redundant boxes for $m = 3$} \label{sec:vsplit}

According to Definition \ref{def:nonred1}, a non-redundant 
box can be characterized as follows. 
A box is non-redundant if and only if it contains a non-empty subset which is not part of any other box of the decomposition. These subsets are studied in the following.
\begin{definition}[Individual subsets] \label{def:nonred}
Let $\mathcal{B}_{s}, s \geq 1$ be a non-redundant decomposition.  
For every $B \in \mbs$, the set 
\begin{equation} \label{eq:defV}
V(B):=B \; \backslash \left( \bigcup_{\tB \in \mathcal{B}_{s} \backslash \{B\}} \tB \right)
\end{equation}
is called individual subset of $B.$
\end{definition}

\begin{figure}
\centering
\includegraphics[width=.62\textwidth]{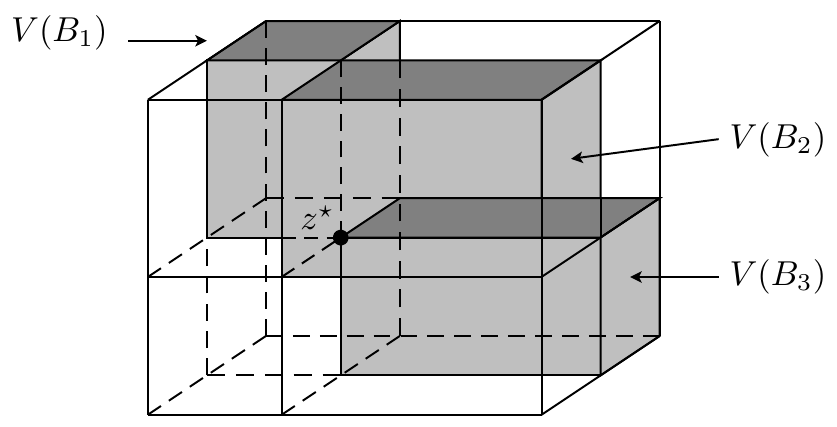} 
\caption{Individual subsets $V(B_{i})$, $i=1,2,3$, in $\R^3$ obtained by a full $3$-split of the initial search region with respect to $z^{\star} \in \NS$}
\label{fig:V}
\end{figure}

Obviously, for every $B \in \mbs, s \geq 1$, it holds that $V(B) \subseteq B$ and $V(B) \cap V(\tB)=\emptyset$ for every $\tB \in \mbs, \tB \neq B$. 
Figure~\ref{fig:V} shows the individual subsets of the three boxes $B_{i}, i=1,2,3$, in $\R^3$, obtained by a full $3$-split of the initial box, which are depicted in Figure~\ref{fig:genericsplit}.

Now, maintaining only non-redundant boxes in the decomposition of the search region is equivalent to maintaining boxes with non-empty individual subsets. 
An explicit split criterion should indicate already before performing the split whether a given box will have a non-empty individual subset after having performed the split. 
To this end, we have to describe the individual subsets explicitly.  
For $m=3$, we observe that the individual subset of a box is bounded by the neighbors of that box. 
After defining the neighbor of a box with respect to a certain component, we show its existence and indicate the respective neighboring boxes by a constructive proof.

\begin{definition}[Neighbor of a box] \label{def:neighbors}
Let $\mathcal{B}_{s}, s \geq 1$ be a non-redundant decomposition of the search region, and let 
$\underline{u}_{i}:=\min\{u_{i}(B) : B \in \mbs \}$.
Let any $\bB \in \mbs$ be given. For every $i \in \{1,2,3\}$, for which $u_{i}(\bB)> \underline{u}_{i}$, we call a box $\hB \in \mbs \backslash \{ \bB \}$ that satisfies 
 \begin{align}  
 u_{i}(\hB) & < u_{i}(\bB) \label{def:neigh1} \\
 u_{j}(\hB) & > u_{j}(\bB) \quad \text{for some} \; j \neq i \label{def:neigh2}\\
 u_{k}(\hB) & \geq u_{k}(\bB) \quad \text{for} \; k \neq i,j \label{def:neigh4}
\intertext{and}
u_{i}(\hB)&=\max\{u_{i}(B) : B \in \mbs \backslash \{ \bB \}, u_{i}(B) < u_{i}(\bB) \} \label{def:neigh3}
\end{align}
the neighbor of $\bB$ with respect to $i$ at the beginning of iteration $s$, denoted by $\Bsi(\bB)$.  
\end{definition}

\begin{example}
Consider Figure~\ref{fig:genericsplit}, which depicts the three boxes that are obtained in the first iteration. At the beginning of iteration $s=2$,  
it holds that $B^2_{1}(B_{2})=B_{1}$, since $B_{1}$ is the unique box satisfying \eqref{def:neigh1}--\eqref{def:neigh3} for $\bB:=B_{2}$. 
Analogously, $B^2_{3}(B_{2})=B_{3}$ holds. 
A neighbor $B^2_{2}(B_{2})$ is not defined as $u_{2}(B_{2})=\underline{u}_{2}$.
\end{example}

The following lemma shows that, under appropriate assumptions, for every box $\bB\in\mbs$ and every component $\itkk$ for which $u_{i}(\bB)> \underline{u}_{i}$ holds there exists a unique neighbor $\Bsi(\bB)$ satisfying \eqref{def:neigh1}--\eqref{def:neigh3} of Definition~\ref{def:neighbors}.
These neighbors $\Bsi(\bB), \itkk$, which will be indicated with the help of a constructive proof will turn out to be the boxes that define the individual subset of $\bB$. 

\begin{assumption} \label{assump:znew2}
Let the following hold:
\begin{enumerate}
\item For all nondominated points $z^p \in \NS, p=1,\dots,s$, generated up to iteration $s \geq 1$, it holds that $z_{j}^p \neq z_{j}^q$ for all $j \in \{1,2,3\}$ and $1 \leq q < p$.
\item The starting box $B_{0}$ is non-empty, and $\mathcal{B}_{1}:=\{ B_{0}\}$ denotes the initial decomposition of the search region. 
\item 
For every iteration $1 \leq p \leq s$, the set $\mathcal{B}_{p+1}$ is obtained from $\mathcal{B}_{p}$ by applying a full $3$-split to every $B \in \mbsp$, where $\mbsp := \{ B \in \mathcal{B}_{p} : z^p \in B\}$.
 All redundant boxes are removed from $\mathcal{B}_{p+1}$ at the end of the respective iteration $p$. 
 \end{enumerate}
\end{assumption}

Note that Assumption~\ref{assump:znew2} substantiates Assumption~\ref{assump:znew} by specifying that the correct, non-redundant decompositions are obtained by iterative full $3$-splits and that redundant boxes are removed.

As the proof of the following lemma is rather technical, we illustrate it with the help of two examples. 
In both examples, $u(B_{0}):=(5,5,5)^{\top}$ is assumed, and $z^1:=(2,2,2)^{\top}$ is inserted as a first nondominated point. 
In the first example, depicted in Figure~\ref{fig:visualizelemma1}, 
$z^2:=(3,1,4)^{\top}$ is inserted as a second nondominated point.
In the second example, depicted in Figure~\ref{fig:visualizelemma2},
$z^2:=(1,1,4)^{\top}$ represents the second nondominated point. 

\begin{lemma} \label{lem:neighb}
Let Assumption~\ref{assump:znew2} be satisfied. 
Then, for every $s \geq 2$, every $\bB \in \mbs$ and every $i \in \{1,2,3\}$, for which $u_{i}(\bB)> \underline{u}_{i}:=\min\{u_{i}(B) : B \in \mbs \}$ holds, there exists a unique neighbor $\Bsi(\bB) \in \mbs$ satisfying \eqref{def:neigh1}--\eqref{def:neigh3}. Particularly, $u_{k}(\Bsi(\bB)) = u_{k}(\bB)$ holds, i.e., $\Bsi(\bB)$ satisfies 
 \begin{align} 
 u_{i}(\Bsi(\bB)) &< u_{i}(\bB) \label{neighi} \\
 u_{j}(\Bsi(\bB)) &> u_{j}(\bB) \quad \text{for some} \; j \neq i \label{neighj}\\
 u_{k}(\Bsi(\bB)) &= u_{k}(\bB) \quad \text{for} \; k \neq i,j, \label{neighk}
\intertext{and}
u_{i}(\Bsi(\bB))&=\max\{u_{i}(B) : B \in \mbs, u_{i}(B) < u_{i}(\bB) \}. \label{neighimax}
 \end{align} 
If $u_{i}(\bB)=\underline{u}_{i}$, then we set $\Bsi(\bB):=\emptyset$. 
\end{lemma}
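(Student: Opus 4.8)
The plan is to argue by induction on the iteration counter $s$, using as backbone the fact (Assumption~\ref{assump:znew2}(3)) that $\mbss$ arises from $\mbs$ by applying a full $3$-split to every box in $\mbsb$ and then deleting the redundant boxes. Throughout I would exploit that each box is encoded by its upper bound alone and that a $3$-split only \emph{lowers} a single coordinate of a child relative to its parent, so that every newly created upper bound agrees with its parent in two of the three components.

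For the base case $s=2$ the first split replaces $B_{0}$ by the three boxes $B_{1},B_{2},B_{3}$, which are non-redundant by Corollary~\ref{cor:oneboxsplit} and satisfy $u(B_{l})=u(B_{0})$ except for $u_{l}(B_{l})=z^{1}_{l}$. Fixing $\bB=B_{a}$ and a direction $i\neq a$ with $u_{i}(\bB)>\underline{u}_{i}$, I would check directly that $\Bsi(\bB)=B_{i}$: indeed $u_{i}(B_{i})=z^{1}_{i}<u_{i}(B_{a})$ gives \eqref{neighi}, $u_{a}(B_{i})=u_{a}(B_{0})>z^{1}_{a}=u_{a}(B_{a})$ gives \eqref{neighj} with $j=a$, and the remaining coordinate is untouched so \eqref{neighk} holds with equality; uniqueness and \eqref{neighimax} are immediate, while for $i=a$ one has $u_{a}(\bB)=\underline{u}_{a}$ and no neighbor is required.

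For the inductive step I would fix a surviving box $\bB\in\mbss$ and a direction $i$ with $u_{i}(\bB)>\underline{u}_{i}$ and construct its neighbor by cases. The core geometric observation is a \emph{sliding argument}: by the correctness of the decomposition (Lemma~\ref{lem:correctmsplit}) the union of all boxes equals the undominated part of $B_{0}$, so no point of $\bB$ is dominated, and the segment obtained from an interior corner point of $V(\bB)$ by lowering only its $i$-th coordinate stays undominated and hence covered by $\bigcup_{B\in\mbss}B$; since $u_{i}(\bB)>\underline{u}_{i}$ this segment must leave $\bB$ and enter another box, and the first box it enters is a covering box, i.e.\ one with $u_{i}$ strictly smaller than $u_{i}(\bB)$ and both remaining coordinates at least $u_{j}(\bB),u_{k}(\bB)$. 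Among all covering boxes I would take the one with maximal $u_{i}$; this candidate satisfies \eqref{neighi}, \eqref{neighj} (at least one remaining coordinate is strict, for otherwise that box would be dominated by $\bB$, contradicting non-redundancy) and \eqref{neighimax}, and I would invoke Lemma~\ref{lem:genredbox} together with Corollary~\ref{cor:nonred} to see that it survives the deletion of redundant boxes, using the induction hypothesis to carry the neighbor of $\bB$ (or of the parent of $\bB$) across the split whenever the relevant boxes are left unchanged.

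The two genuinely delicate points, which I regard as the main obstacle, are first the sharpening of \eqref{def:neigh4} to the \emph{equality} \eqref{neighk}, and second the identification of the neighbor when the split destroys the old one. For the equality I would argue that if the maximal covering box $\hB$ had \emph{both} remaining coordinates strictly larger than those of $\bB$, then the face-sharing structure of $3$-splits (every upper-bound coordinate equals $z^{M}_{k}$ or some $z^{p}_{k}$, and each child shares two coordinates with its parent) forces the existence of a further box lying strictly between $\hB$ and $\bB$ in direction $i$, contradicting the maximality of $u_{i}(\hB)$; hence exactly one remaining coordinate is strictly larger and the third must coincide. For the bookkeeping across a split I would distinguish whether $\bB$ and its prospective neighbor lie in $\mbsb$ or in $\mbs\backslash\mbsb$, and in the affected cases identify the neighbor as the unique split-child of the old neighbor that retains the shared face with $\bB$, checking via Corollary~\ref{cor:nonred} that exactly one such child survives. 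Uniqueness then follows at once, since any two covering boxes with the same $u_{i}$ and the equality property \eqref{neighk} would be comparable and hence redundant.
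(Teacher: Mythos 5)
Your overall scaffolding (induction on $s$, explicit base case, case analysis according to whether the relevant boxes lie in $\mbsb$ or not) matches the paper, but the central device you rely on for existence --- the sliding argument --- does not work in this setting. All boxes in the decomposition share the common lower bound $l=z^I$ and are distinguished by their upper bounds alone, so if you take a point of $V(\bB)$ near $u(\bB)$ and lower only its $i$-th coordinate, the resulting segment \emph{never leaves $\bB$}: it remains in $\bB$ all the way down to $l_i$. Consequently the correctness identity $B_0\setminus\bigcup_{B\in\mbs}B=\bigcup_p S_2(z^p)$ gives no information (the segment is trivially covered by $\bB$ itself) and produces no candidate neighbor. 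The existence of a box $\hB$ with $u_i(\hB)<u_i(\bB)$ and $u_j(\hB)\geq u_j(\bB)$ for both $j\neq i$ is precisely the nontrivial structural assertion of the lemma; it cannot be extracted from the covering property and must be carried through the induction. The paper does this by explicitly identifying, after each split, the new neighbor of every box --- for the children of split boxes via the ordering \eqref{orderincr}--\eqref{orderdecr} of the surviving $i$-split children, and for unsplit boxes via the dichotomy ``old neighbor not split, hence unchanged'' versus ``old neighbor split, new neighbor is its unique $j$-child with $z^s_j>u_j(B)$''.

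The same problem affects your treatment of the equality \eqref{neighk}. You assert that if the maximal covering box had \emph{both} remaining coordinates strictly larger, then ``the face-sharing structure of $3$-splits forces the existence of a further box lying strictly between'', but this intermediate box is exactly what the lemma is designed to produce, so the argument is circular as stated. In the paper, \eqref{neighk} is obtained as part of the induction hypothesis and is re-verified for each of the explicitly named new neighbors (e.g.\ $u_k(\hB_j)=u_k(\hB)=u_k(B)$ in case (iv), and the chain $\Bssj(\hB_i^{I_i(q)})=\hB_i^{I_i(q-1)}$ whose members agree in coordinate $i$). Likewise, maximality \eqref{neighimax} over \emph{all} boxes with smaller $u_i$ (not merely over the covering boxes) and the uniqueness claim both come out of this explicit bookkeeping, together with Assumption~\ref{assump:znew2}~(1) excluding ties; your proposal acknowledges these points but does not supply the arguments. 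To repair the proof you would essentially have to carry out the constructive case analysis of the paper.
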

\begin{proof}
By induction on $s$.\\
$\underline{s=2}: \mathcal{B}_{1}=\{ B_{0}\}=\overline{\mathcal{B}}_{1}$, as $z^1<u(B_{0})=z^M$. The starting box is split into three subboxes $\hB_{i}:=\{z \in B_{0} : z_{i} < z_{i}^1 \}, i \in \{1,2,3\}$. Due to Lemma~\ref{lem:genredbox}, the new boxes are non-redundant, thus, $ \mathcal{B}_{2}=\{\hB_{1},\hB_{2},\hB_{3} \}$. Consider $\hB_{i}$ for fixed $i \in \{1,2,3\}$: As $u_{i}(\hB_{i})=z_{i}^1 < u_{i}(\hB_{j})$ for every $j \neq i$, by definition, $B_{i}^2(\hB_{i})=\emptyset$. As $u_{j}(\hB_{i})>z_{j}^1=\min\{u_{j}(B) : B \in \mathcal{B}_{2} \}$, for every $j \neq i,$ a unique neighbor $B_{j}^2(\hB_{i})$ exists and, obviously, $B_{j}^2(\hB_{i})=\hB_{j}$ holds.
\\
$\underline{s \to s+1}$: We assume that unique neighbors satisfying \eqref{def:neigh1}--\eqref{def:neigh3} exist for all $B \in \mbs$ and that, additionally, \eqref{neighk} holds. We insert $z^s \in \NS$. Due to the correctness of the full $m$-split, there exists at least one box which is split, i.e., $|\mbsb| \geq 1$.

Case 1: $|\mbsb| = 1$, i.e., only one box is split. 
Let $\hB$ be this box and let $\hB_{i}, i \in \{1,2,3\}$, be the subboxes resulting from the split. 
Due to Lemma~\ref{lem:genredbox}, the new boxes are non-redundant, thus, $ \mbss=(\mbs \backslash \{\hB\}) \cup \{\hB_{1},\hB_{2},\hB_{3} \}$. 
The corresponding box in Figure~\ref{fig:visualizelemma1}~(a) is $\hB=B_{12}$, which is replaced by $\hB_{1}=B_{21}$, $\hB_{2}=B_{22}$, $\hB_{3}=B_{23}$, see Figure~\ref{fig:visualizelemma1}~(b). 

\begin{figure}
\centering
\subfigure[Before the insertion of $z^2$]{
\includegraphics[width=.47\textwidth]{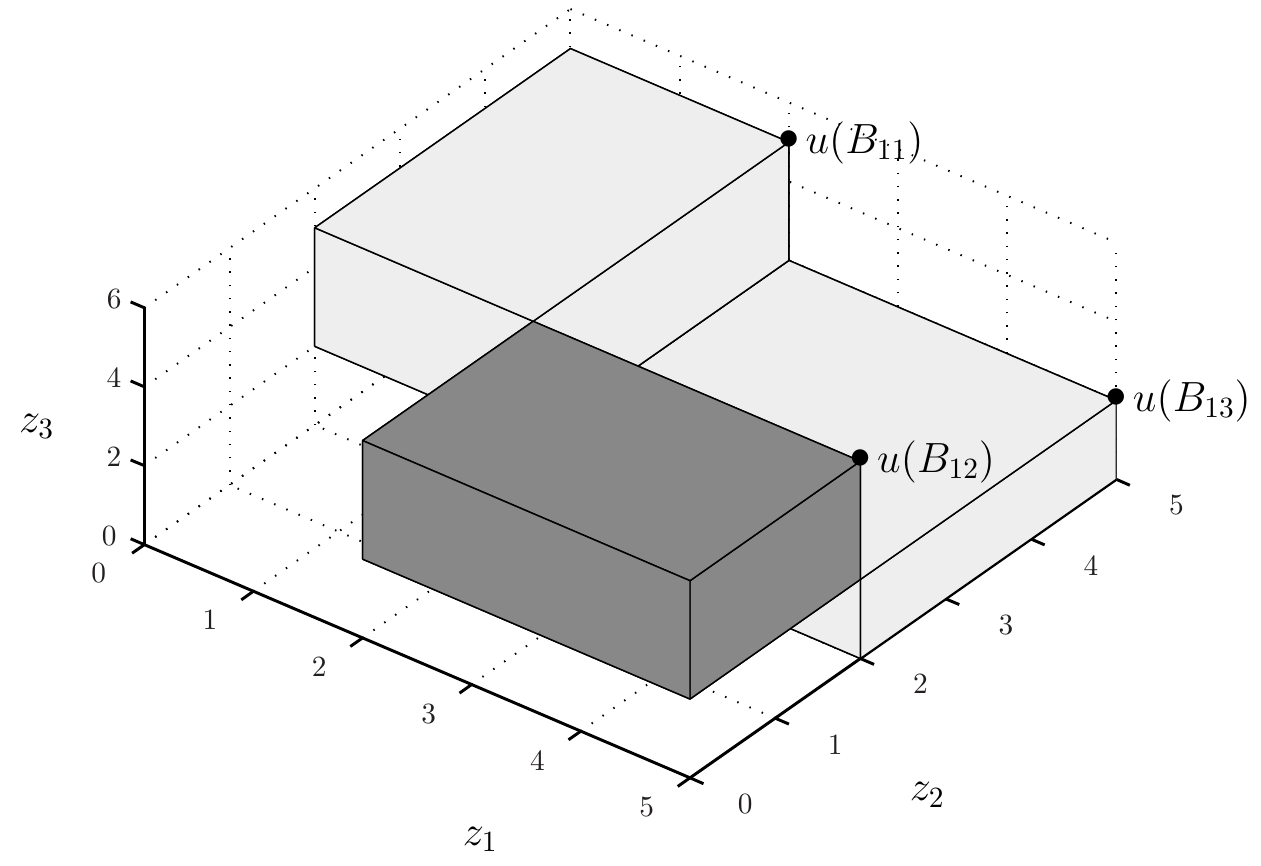} 
}
\subfigure[After the insertion of $z^2$]{
\includegraphics[width=.47\textwidth]{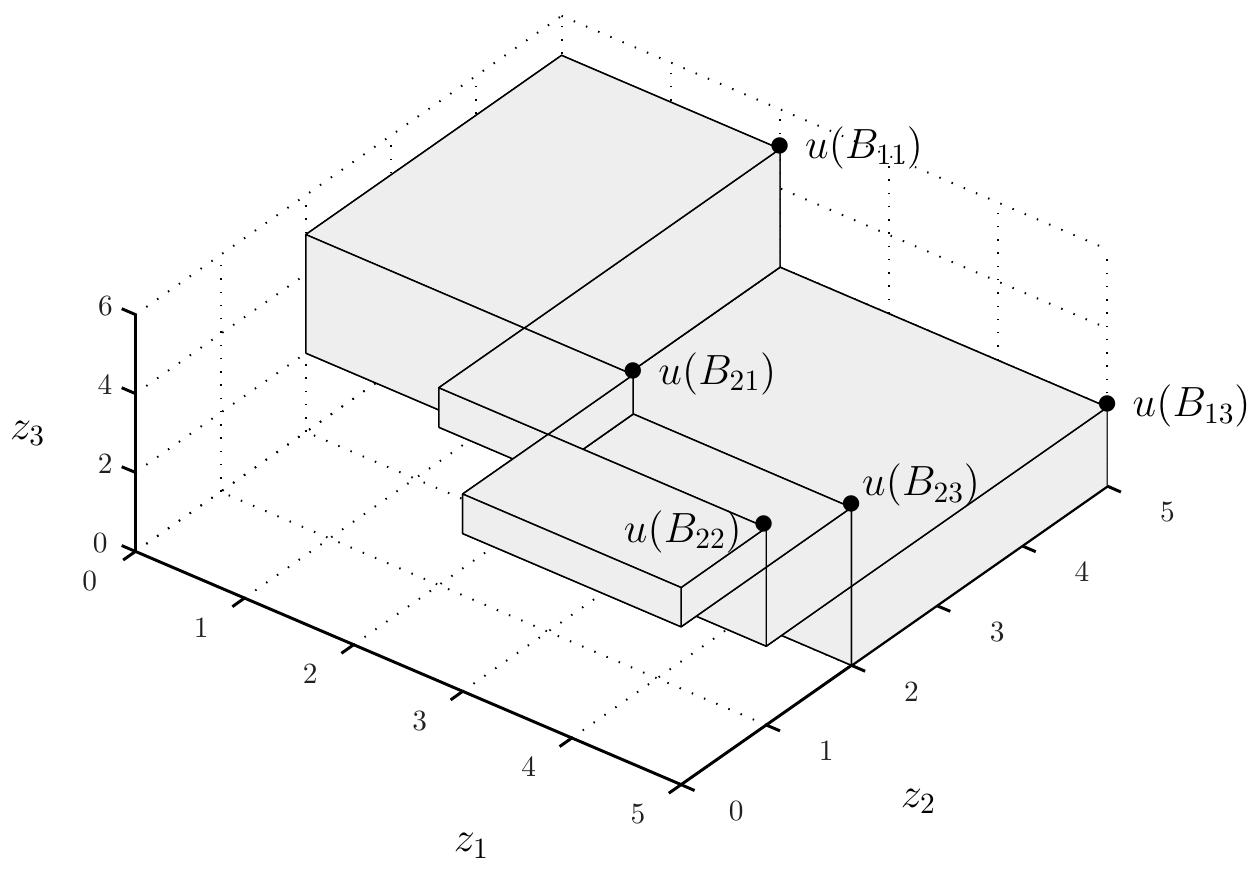} 
} 
\caption{Visualization of the upper bound vectors $u(B)$ in case~1 of the proof of Lemma~\ref{lem:neighb}: 
$z^2=(3,1,4)^{\top}$ lies only in box  
$B_{12}$ with $u(B_{12})=(5,2,5)^{\top}$, i.e.,  $|\overline{\mathcal{B}}_{2}| = 1$. 
For a better illustration, the individual subsets $V(B)$ of all boxes are depicted. 
} 
\label{fig:visualizelemma1} 
\end{figure}

Consider an arbitrary box $\hB_{i}, i \in \{1,2,3\}$. Then the following holds:
\begin{itemize}
\item[(i)] $\Bssi(\hB_{i})=\Bsi(\hB)$:\\ 
$\hB_{i}$ is the only new box $B$ with $u_{i}(B) = z^s_{i}$ and there is no other new box $B$ satisfying $u_{i}(B) < z^s_{i}$. Hence, $\Bssi(\hB_{i}) \notin \{\hB_{1},\hB_{2},\hB_{3} \}$.  If $\Bsi(\hB)=\emptyset$ then $\Bssi(\hB_{i})=\emptyset$. Otherwise, i.e., if $\Bsi(\hB)$ exists, 
$u_{i}(\Bsi(\hB))<u_{i}(\hB)$, $u_{j}(\Bsi(\hB)) > u_{j}(\hB)$ for some $j \neq i$ and $u_{k}(\Bsi(\hB)) = u_{k}(\hB)$ for $k \neq i,j$ hold due to the induction hypothesis. Now $u_{i}(\Bsi(\hB)) \leq z^s_{i}$ must be satisfied, as otherwise $\Bsi(\hB_{i}) \in \mbsb$ would hold, a contradiction to the assumption that $\mbsb=\{\hB\}$. Moreover, $u_{i}(\Bsi(\hB)) = z^s_{i}$ is excluded due to Assumption~\ref{assump:znew2}~(1).
Thus, \eqref{neighi}-\eqref{neighimax} holds for $\Bssi(\hB_{i})=\Bsi(\hB)$. The uniqueness of $\Bsi(\hB)$ follows from the induction hypothesis.

In Figure~\ref{fig:visualizelemma1}~(b), an example of this case is given by $B_{1}^3(B_{21})=B^2_{1}(B_{12})=B_{11}$. 

\item[(ii)] $\Bssj(\hB_{i})=\hB_{j}$ for all $j \neq i$:\\
$\hB_{j}$ is the only new box $B$ with $u_{j}(B) = z^s_{j}$ and there is no other new box $B$ satisfying $u_{j}(B) < z^s_{j}$. Furthermore, $\hB_{j}$ satisfies \eqref{neighi}-\eqref{neighk}, as 
$u_{j}(\hB_{j}) < u_{j}(\hB_{i})$,
$u_{i}(\hB_{j}) > u_{i}(\hB_{i})$ and $u_{k}(\hB_{j}) = u_{k}(\hB_{i})$ for $k \neq i,j$ hold. Moreover, $u_{j}(\hB_{j})$ is maximal, as $u_{j}(\hB_{j})=z^s_{j} \geq u_{j}(\Bsj(\hB))$ if $\Bsj(\hB) \neq \emptyset$ and $u_{j}(\Bsj(\hB))$ maximal due to the induction hypothesis. As $z^s_{j} = u_{j}(\Bsj(\hB))$ is excluded due to Assumption~\ref{assump:znew2}~(1), the uniqueness of $\Bssj(\hB_{i})$ follows.

In Figure~\ref{fig:visualizelemma1}~(b), examples are given by $B_{2}^3(B_{21})=B_{22}$ and $B^3_{3}(B_{21})=B_{23}$. 

\end{itemize}

Now consider an arbitrary box $B \neq \hB$. Then the following holds:
\begin{itemize}
\item[(iii)] If $\Bsi(B) \neq \hB$ for some $i \in \{1,2,3\}$, then $\Bssi(B)$ remains unchanged:\\
Assume that $\Bssi(B)$ changes due to the split of $\hB$. Then the only candidate for $\Bssi(B)$ is $\hB_{i}$ and only in case that $u_{i}(\hB_{i})<u_{i}(B) \leq u_{i}(\hB)$, as otherwise $\Bsi(B)=\hB$ would have been valid. Now suppose that $\Bssi(B)=\hB_{i}$. As $u_{j}(\hB_{i})=u_{j}(\hB)$ for all $j \neq i$ and, by definition of $\Bsi(B)$, $u_{j}(\hB_{i}) \geq u_{j}(B)$ for all $j \neq i$, we have that $u_{j}(\hB) \geq u_{j}(B)$ for all $j \neq i$ and hence $u_{l}(\hB) \geq u_{l}(B)$ for all $l \in \{1,2,3\}$, a contradiction to $\mbs$ being non-redundant.
Thus, $\Bssi(B)$ remains unchanged.

In the example depicted in Figure~\ref{fig:visualizelemma1}~(b), let $B=B_{13}$. 
As $B_{1}^2(B_{13})=B_{11} \neq B_{12}$, the neighbor remains unchanged, thus, $B^3_{1}(B_{13})=B_{11}$. 

\item[(iv)] If $\Bsi(B) = \hB$ for some $\itk$, then $\Bssi(B)=\hB_{j}$ with $j$ being the unique index for which $u_{j}(\hB)>u_{j}(B)$ holds:\\
By the induction hypothesis, 
$u_{i}(\hB)<u_{i}(B)$, $u_{j}(\hB)>u_{j}(B)$ for some $j \neq i$ and $u_{k}(\hB)=u_{k}(B)$ for $k \neq i,j$. As $z^s_{i}=u_{i}(\hB_{i})<u_{i}(\hB)$ and $u_{l}(\hB_{i})=u_{l}(\hB)$ for all $l \neq i$,  $\hB_{i}$ is a candidate for $\Bssi(B)$. As $B \notin \mbsb$ and $z_{l}^s < u_{l}(B)$ for all $l \neq j$ it follows that $z^s_{j} \geq u_{j}(B)$, and, due to Assumption~\ref{assump:znew2}~(1), $z^s_{j} > u_{j}(B)$. Thus,  
$u_{i}(\hB_{j})=u_{i}(\hB)<u_{i}(B)$, $u_{j}(\hB_{j})=z^s_{j}>u_{j}(B)$ and $u_{k}(\hB_{j})=u_{k}(\hB)=u_{k}(B)$ hold. Therefore, $\hB_{j}$ is the unique other candidate for $\Bssi(B)$ besides $\hB_{i}$. As $u_{i}(\hB_{i})<u_{i}(\hB_{j})=u_{i}(\hB),$ $\hB_{j}$ is the unique neighbor $\Bssi(B)$ after the split.

In the example depicted in Figure~\ref{fig:visualizelemma1}~(b), 
$B_{2}^2(B_{13})=B_{12} = \hB$ and $B^3_{2}(B_{13})=B_{23}$. Box $B_{22}$ is the unique other candidate for $B^3_{2}(B_{13})$, but as 
$u_{2}(B_{22})<u_{2}(B_{23})$ it holds that  $B^3_{2}(B_{13})=B_{23}$. 
\end{itemize}
Case 2: $|\mbsb| > 1$.\\
By definition of $\mbsb$, it holds that $z^s_{i} < u_{i}(B)$ for all $i \in \{1,2,3\}$ and $B \in \mbsb$, thus, $z^s_{i} < \min\{ u_{i}(B) : B \in \mbsb\}$ for all $i \in \{1,2,3\}$.
According to Lemma~\ref{lem:genredbox} and Corollary~\ref{cor:nonred}, redundancy occurs only for boxes $\hB, \tB \in \mbsb$ which are split with respect to the same component $i \in \{1,2,3\}$ (i.e., $u_{i}(\hB_{i})=u_{i}(\tB_{i})=z^s_{i}$) and for which  
$u_{l}(\hB_{i}) \geq u_{l}(\tB_{i})$ or $u_{l}(\hB_{i}) \leq u_{l}(\tB_{i})$ holds for all $l \neq i.$ By assumption, those boxes are removed, i.e., $\mbss$ contains only non-redundant boxes. 
We illustrate this case by the example depicted in Figure~\ref{fig:visualizelemma2}. 

\begin{figure}
\centering
\subfigure[Before the insertion of $z^2$]{
\includegraphics[width=.47\textwidth]{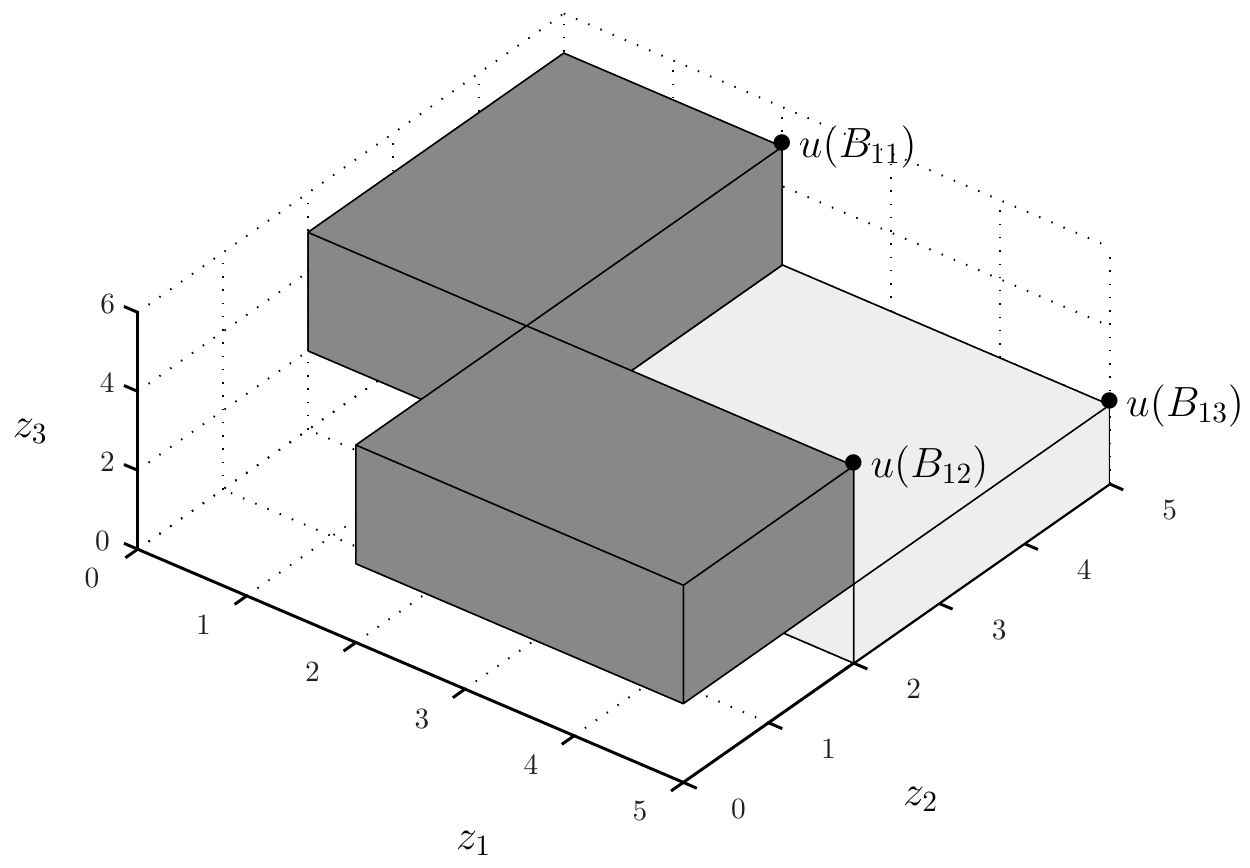}
}
\subfigure[After the insertion of $z^2$]{
\includegraphics[width=.47\textwidth]{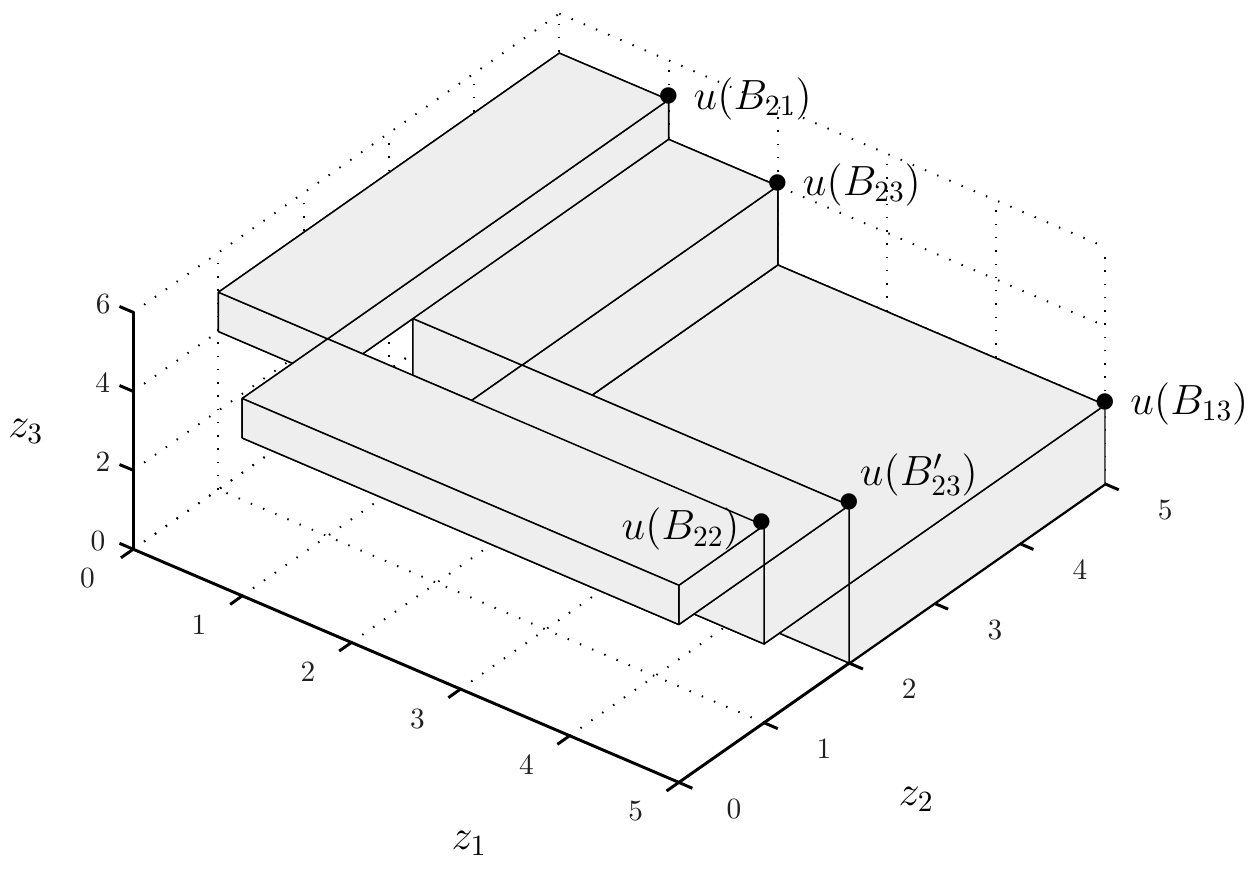} 
} 
\caption{Visualization of the upper bound vectors $u(B)$ in case~2 of the proof of Lemma~\ref{lem:neighb}: 
$z^2=(1,1,4)^{\top}$ lies in the two boxes $B_{11}$ with $u(B_{11})=(2,5,5)^{\top}$ and 
$B_{12}$ with $u(B_{12})=(5,2,5)^{\top}$, i.e.,  $|\overline{\mathcal{B}}_{2}| = 2$.
For a better illustration, the individual subsets $V(B)$ of all boxes are depicted. 
} 
\label{fig:visualizelemma2} 
\end{figure}

Let $\mbsb=\{\hB^1,\dots,\hB^P\}$ with $P \in \N, P \geq 2$.
The corresponding boxes in Figure~\ref{fig:visualizelemma2}~(a) are $\hB^1=B_{11}$ and $\hB^2=B_{12}$. 
For every $i \in \{1,2,3\}$, let $I_{i} \subseteq \{ 1,\dots,P \}$ be the index set of the boxes from $\mbsb$ whose split with respect to $i$ yields a non-redundant box. 
Note that $I_{i} \neq \emptyset$ for every $i \in \{1,2,3\}$, which can be seen as follows: Consider an arbitrary box $B \in \mbsb$. Applying the full $3$-split to $B$ results in three new boxes. Now any of the resulting boxes is removed if and only if there exists another box that dominates it. According to Lemma~\ref{lem:genredbox} the dominating box must have been created by a split with respect to the same component as the dominated box. Therefore, $I_{i} \neq \emptyset$ for every $i \in \{1,2,3\}$. We set $Q_{i}:=|I_{i}| \geq 1$ for every $i \in \{1,2,3\}$. Furthermore, let $\bar{u}_{i}:=\max\{u_{i}(B), B \in \mbsb \}$ for all $\itkk$ in the following, which is well defined as $\mbsb \neq \emptyset$. 

In the example depicted in Figure~\ref{fig:visualizelemma2}~(b), $Q_{1}=Q_{2}=1$ and $Q_{3}=2$. Moreover, from Figure~\ref{fig:visualizelemma2}~(a) we see that $\bar{u}=(5,5,5)^{\top}$. 

Consider now $i$ arbitrary but fixed.
Let $\hB^{I_{i}(1)},\dots,\hB^{I_{i}(Q_{i})}$ denote the boxes whose split with respect to component $i$ yields a non-redundant box. 
As $u_{i}(\hB_{i}^{I_{i}(q)})=z^s_{i}$ holds for all $\hB_{i}^{I_{i}(q)}\in \mbss, q=1,\dots,Q_{i}$, as $m=3$ and as we assume non-redundancy, we can order the boxes with respect to their upper bounds increasingly by some component $j \neq i$ and decreasingly by component $k \neq i,j$, i.e.,
\begin{align} 
z^s_{j} < \, &u_{j}(\hB_{i}^{I_{i}(1)}) < u_{j}(\hB_{i}^{I_{i}(2)}) < \dots < u_{j}(\hB_{i}^{I_{i}(Q_{i})}), \label{orderincr}\\
&u_{k}(\hB_{i}^{I_{i}(1)}) >u_{k}(\hB_{i}^{I_{i}(2)}) > \dots > u_{k}(\hB_{i}^{I_{i}(Q_{i})}) > z^s_{k}. \label{orderdecr}
\end{align} 
Thereby, $u_{j}(\hB_{i}^{I_{i}(Q_{i})})=u_{j}(\hB^{I_{i}(Q_{i})})=\bar{u}_{j}$ holds, since in the other case, i.e., if there was some $\tB \in \mbsb$ with $u_{j}(\tB)>u_{j}(\hB_{i}^{I_{i}(Q_{i})})$, either $\tB$ would have been the last box in \eqref{orderincr} with index $I_{i}(Q_{i})$ or $\hB_{i}^{I_{i}(Q_{i})}$ would have been dominated by $\tB$, both in contradiction to the construction. Analogously, 
$u_{k}(\hB_{i}^{I_{i}(1)})=u_{k}(\hB^{I_{i}(1)})=\bar{u}_{k}$ must hold. 

In the example depicted in Figure~\ref{fig:visualizelemma2}~(a), consider $i=3$ and, without loss of generality, let
$\hB^{I_{3}(1)}=B_{11}$ and $\hB^{I_{3}(2)}=\hB^{I_{3}(Q_{3})}=B_{12}$. 
The upper bounds  
$u(B_{11})=(2,5,5)^{\top}$ and 
$u(B_{12})=(5,2,5)^{\top}$ can be ordered increasingly with respect to component $j=1$ and decreasingly with respect to component $k=2$. It holds that
$u_{1}(\hB_{3}^{I_{3}(Q_{3})})=5=\bar{u}_{1}$ and $u_{2}(\hB_{3}^{I_{3}(1)})=5=\bar{u}_{2}$. 

If $u_{i}(\hB^{I_{i}(1)})=\max\{u_{i}(B) : B \in \mbs, u_{k}(B)=\bar{u}_{k}\}=:\bar{u}_{i,k}$ holds, 
then the split of $\hB^{I_{i}(1)}$ with respect to $j$ generates a non-redundant box, too, and, depending on the chosen enumeration, $I_{i}(1)$ either equals $I_{j}(1)$ or $I_{j}(Q_{j})$. 
Without loss of generality, we can set $I_{i}(1)=I_{j}(1)$. Otherwise, i.e., if 
$u_{i}(\hB^{I_{i}(1)})<\bar{u}_{i,k}$ holds, then
$\hB^{I_{i}(1)}_{j}$ is dominated by a unique box $\tB \in \mbsb$ with $u_{k}(\tB)=\bar{u}_{k}$ and $u_{i}(\tB)=\bar{u}_{i,k}$. Then $\tB= \hB^{I_{j}(1)}$ holds.

Analogously, if $u_{i}(\hB^{I_{i}(Q_{i})})=\max\{u_{i}(B) : B \in \mbs, u_{j}(B)=\bar{u}_{j}\}=:\bar{u}_{i,j}$ holds, 
then the split of $\hB^{I_{i}(Q_{i})}$ with respect to $k$ generates a non-redundant box, too, and, without loss of generality, we can identify $I_{i}(Q_{i})=I_{k}(Q_{k})$. 
 Otherwise, i.e., if $u_{i}(\hB^{I_{i}(Q_{i})})<\bar{u}_{i,j}$ holds, then
 $\hB^{I_{i}(Q_{i})}_{k}$ is dominated by a unique box $\tB \in \mbsb$ with $u_{j}(\tB)=\bar{u}_{j}$ and
 $u_{i}(\tB)=\bar{u}_{i,j}$. Then $\tB= \hB^{I_{k}(Q_{k})}$ holds.
 
Note that if $Q_{i}=1$, then 
$\hB^{I_{i}(1)}=\hB^{I_{i}(Q_{i})}=:\hB$ 
and $u_{j}(\hB)=\bar{u}_{j}$ as well as $u_{k}(\hB)=\bar{u}_{k}$ hold. 
In this case, $u_{i}(\hB)<\bar{u}_{i}$ must be satisfied, as otherwise $\hB$ would dominate any other box in $\mbsb$, a contradiction to $|\mbsb| > 1$ and $\mbs$ being non-redundant.

In the example depicted in Figure~\ref{fig:visualizelemma2}, consider $i=3$ and $k=2$. It holds that  
$u_{3}(\hB^{I_{3}(1)})=5=\max\{u_{3}(B) : B \in \mbs, u_{2}(B)=5 \}$.
The split of $\hB^{I_{3}(1)}$ with respect to $j=1$ generates the non-redundant box $B_{21}$.
If we consider $i=1$, then 
$u_{1}(\hB^{I_{1}(1)})=u_{1}(B_{11})=2<5=\max\{u_{1}(B) : B \in \mbs, u_{3}(B)=5 \}$, hence, the split of $B_{11}$ with respect to component $j=2$ must be redundant, and, indeed, the resulting box is dominated by $B_{22}$. 
\\
  
Analogously to Case~1, we will now indicate the neighbor boxes explicitly. Therefore, consider $\hB_{i}^{I_{i}(q)} \in \mbss$ for fixed $i \in \{1,2,3\}, q\in \{1,\dots,Q_{i}\}$. It holds that 

\begin{itemize}

\item[(i)] $\Bssi(\hB_{i}^{I_{i}(q)})=\Bsi(\hB^{I_{i}(q)})$:\\
Assume $\Bsi(\hB^{I_{i}(q)}) \in \mbsb$. By definition of $\Bsi$, $u_{i}(\Bsi(\hB^{I_{i}(q)}))<u_{i}(\hB^{I_{i}(q)})$ and $u_{l}(\Bsi(\hB^{I_{i}(q)})) \geq u_{l}(\hB^{I_{i}(q)})$ for all $l \neq i$ hold. But then, by an $i$-split of $\hB^{I_{i}(q)}$ and $\Bsi(\hB^{I_{i}(q)})$, the box $\hB^{I_{i}(q)}_{i}$ would be redundant. Therefore, $\Bsi(\hB^{I_{i}(q)}) \notin \mbsb$ must hold. Analogously to Case~1(i), we obtain $\Bssi(\hB_{i}^{I_{i}(q)})=\Bsi(\hB^{I_{i}(q)})$.

In the example depicted in Figure~\ref{fig:visualizelemma2}, it holds that
$B_{3}^3(B_{23})=B_{3}^2(B_{11})=B_{13}$ and
$B_{3}^3(B_{23}')=B_{3}^2(B_{12})=B_{13}$. 

\item[(ii)] Determination of $\Bssj(\hB_{i}^{I_{i}(q)})$ and $\Bssk(\hB_{i}^{I_{i}(q)})$ for $j,k \neq i$:\\
Consider all $\hB_{i}^{I_{i}(q)} \in \mbss, q=1,\dots,Q_{i}$, ordered as in \eqref{orderincr} and \eqref{orderdecr}: It holds that 
$$\Bssj(\hB_{i}^{I_{i}(q)})=\hB_{i}^{{I_{i}(q-1)}} \quad \text{for all} \; q=2,\dots,Q_{i},$$
as for all other boxes $\hB_{i}^{I_{i} (p)}, p \neq q-1$, either 
$u_{j}(\hB^{I_{i}  (p)}_{i}) < u_{j}(\hB^{{I_{i}(q-1)}}_{i})$ or \linebreak $u_{j}(\hB^{I_{i}{ (p)}}_{i}) > u_{j}(\hB^{I_{i}(q)}_{i})$ holds.  
Moreover, all new boxes split with respect to $j$ have component $u_{j}$ smaller than $u_{j}(\hB_{i}^{I_{i}(q-1)})$ and all new boxes split with respect to $k$ have component $u_{k}$ smaller than $u_{k}(\hB_{i}^{I_{i}(q)})$, and, thus, do not satisfy \eqref{neighk}. For all boxes $B \notin \mbsb$, it holds that $u_{l}(B)<\min\{ u_{l}(B) : B \in \mbsb\}$ for some $l$, so either $u_{j}(B)<u_{j}(\hB_{i}^{I_{i}(q-1)})$ or \eqref{neighk} is not satisfied.

Next, we determine $\Bssj(\hB_{i}^{I_{i}(1)})$: As $u_{j}(\hB_{i}^{I_{i}(q)})>u_{j}(\hB_{i}^{I_{i}(1)})$ for all $q=2,\dots,Q_{i}$, no box split with respect to $i$ can be the neighbor $\Bssj(\hB_{i}^{I_{i}(1)})$. Furthermore, as $u_{k}(\hB_{i}^{I_{i}(1)})>z^s_{k}$, $\Bssj(\hB_{i}^{I_{i}(1)})$ can not be found among the new boxes split with respect to $k$. 
Therefore, $\Bssj(\hB_{i}^{I_{i}(1)})$ can only be found among the boxes split with respect to component $j$. 
Now, as shown above, $u_{k}(\hB^{I_{i}(1)})=\bar{u}_{k}
$ holds, which implies that $u_{k}(\Bssj(\hB_{i}^{I_{i}(1)}))=\bar{u}_{k}$ must be satisfied. Therefore, the unique candidate for $\Bssj(\hB_{i}^{I_{i}(1)})$ is $\hB_{j}^{I_{j}(1)}$, which, as explained above, either equals the box obtained from $\hB^{I_{i}(1)}$ by a split with respect to $j$ or the unique box dominating it. 

Analogously, it can be shown that
$$\Bssk(\hB_{i}^{I_{i}(q)})=\hB_{i}^{I_{i}(q+1)} \quad \text{for all} \; q=1,\dots,Q_{i}-1,$$ and 
$$\Bssk(\hB_{i}^{I_{i}(Q_{i})})=\hB_{k}^{I_{k}(Q_{k})},$$ 
where $\hB_{k}^{I_{k}(Q_{k})}$ either equals the box obtained from $\hB^{I_{i}(Q_{i})}$ by a split with respect to $k$ (then $I_{i}(Q_{i})=I_{k}(Q_{k})$) or the unique box dominating it. 

In the example depicted in Figure~\ref{fig:visualizelemma2}, 
$B_{1}^3(B_{23}')=B_{23}$ and
$B_{1}^3(B_{23})=B_{21}$ hold.

\end{itemize} 

Finally, for all $B \notin \mbsb$ we obtain the following results which are equivalent to Case~1:
\begin{itemize}
\item[(iii)] If $\Bsi(B) \notin \mbsb$ for some $i \in \{1,2,3\}$, then $\Bssi(B)$ remains unchanged.

As in the example depicted in Figure~\ref{fig:visualizelemma2} box $B_{13}$ is the unique box which is not split and all of its neighbors are split, this case does not occur. 

\item[(iv)] If $\Bsi(B) =:\hB \in \mbsb$ for some $i \in \{1,2,3\}$, 
then, following the same argumentation as in Case~1(iv), $z^s_{j}>u_{j}(B)$ for one unique index $j \neq i$ and, thus, the correct candidate for $\Bssi(B)$ would be $\hB_{j}.$ 
It remains to show that $\hB_{j}$ exists and that 
$u_{i}(\hB_{j})=\max\{u_{i}(\tB) : \tB \in \mbss, u_{i}(\tB) < u_{i}(B) \}$. 

Assume that $\hB_{j}$ does not exist, i.e., it is redundant in $\mbss$. Then there exists $\bB \in \mbsb$ with $u_{i}(\bB) \geq u_{i}(\hB)$ and $u_{k}(\bB) \geq u_{k}(\hB).$ 
As $\bB, \hB \in \mbs$ and $\mbs$, by induction, is non-redundant, $u_{j}(\bB) < u_{j}(\hB)$ must hold. As $\bB \in \mbsb$ it follows that $z^s_{j}<u_{j}(\bB),$ so $u_{j}(B)<z^s_{j}<u_{j}(\bB)$ and $u_{k}(B)=u_{k}(\hB) \leq u_{k}(\bB)$ hold.
 If $u_{i}(\bB) \geq u_{i}(B),$ $B$ would have been redundant in $\mbs.$ Thus, $u_{i}(\bB) < u_{i}(B)$ must hold. However, as $\Bsi(B)=\hB$, the induction hypothesis then implies that $u_{i}(\bB)<u_{i}(\hB),$ a contradiction to the assumption on $\bB$.
Thus, $\hB_{j}$ is non-redundant and  
$u_{i}(\hB_{j})=u_{i}(\hB)=\max\{u_{i}(\tB) : \tB \in \mbss, u_{i}(\tB) < u_{i}(B) \}$ 
holds.

In the example depicted in Figure~\ref{fig:visualizelemma2}, consider $B^2_{1}(B_{13})=B_{11}$. 
Since $j=3$ is the unique index $\neq 1$ such that $z^2_{j}>u_{j}(B_{13})$, it holds that $B^3_{1}(B_{13})=B_{23}$, i.e., the new neighbor is the box which results from $B_{11}$ by a split with respect to~$j$. 
\end{itemize}
\qed 
\end{proof}
In the following corollary we summarize the properties of the neighbors of all new boxes obtained in the constructive proof of Lemma~\ref{lem:neighb}.
 
\begin{corollary} \label{cor:proof} 
Let Assumption~\ref{assump:znew2} be satisfied.
For every $i \in \{1,2,3\}$, let $I_{i} \subseteq \{ 1,\dots,P \}$, $I_{i} \neq \emptyset$, $P \in \N$, $|I_{i}|=Q_{i}$, be the index set of the boxes of $\mbsb$ whose split with respect to $i \in \{1,2,3\}$ yields a non-redundant box. 
Then for all new boxes $\hB^{I_{i}(q)}_{i}$, $q=1,\dots,Q_{i}$, it holds that
\begin{align}
\Bssi(\hB_{i}^{I_{i}(q)})&= \phantom{\;} \Bsi(\hB^{I_{i}(q)}) \quad \forall \, q=1,\dots,Q_{i}, \label{eq:neighi} \\
\Bssj(\hB_{i}^{I_{i}(q)}) &= 
\left\{
\begin{array}{ll}
\hB_{j}^{I_{j}(1)} & q=1,\\
\hB_{i}^{I_{i}(q-1)} & \forall \, q=2,\dots,Q_{i}, 
\end{array}
\right.  \label{eq:neighj}
\\
\Bssk(\hB_{i}^{I_{i}(q)}) &= 
\left\{
\begin{array}{ll}
\hB_{i}^{I_{i}(q+1)}  & \forall \, q=1,\dots,Q_{i}-1, \\ 
\hB_{k}^{I_{k}(Q_{k})} & q=Q_{i},
\end{array}
\right. \label{eq:neighk}
\end{align}
where the indices $j$ and $k$ are chosen as in \eqref{orderincr} and \eqref{orderdecr}, i.e., 
the boxes $\hB^{I_{i}(q)}_{i}, q=1,\dots,Q_{i},$ are ordered with respect to their upper bounds increasingly by component $j \neq i$ and decreasingly by component $k \neq i,j$.
Moreover, $I_{j}(1)$ and $I_{k}(Q_{k})$ are chosen such that $\hB_{j}^{I_{j}(1)}$ and $\hB_{k}^{I_{k}(Q_{k})}$ either equal $\hB_{j}^{I_{i}(1)}$ and $\hB_{k}^{I_{i}(Q_{i})}$, respectively, or the unique box dominating it. 
\end{corollary}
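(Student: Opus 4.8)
The corollary merely assembles, in a single place, the explicit neighbor identifications that were already established during the constructive proof of Lemma~\ref{lem:neighb}; hence the plan is to invoke that proof and collect its conclusions rather than to argue anew. I would first observe that Case~1 of that proof is the special case $P=1$, in which $Q_{i}=1$ for every $i\in\{1,2,3\}$ and the three formulas \eqref{eq:neighi}--\eqref{eq:neighk} degenerate to parts (i) and (ii) of Case~1; so it suffices to read off the claimed equalities from Case~2, where $P\geq 2$. Throughout, the indices $j$ and $k$ attached to a fixed $i$ are pinned down by the ordering \eqref{orderincr}--\eqref{orderdecr}, so that the non-redundant descendants $\hB_{i}^{I_{i}(1)},\dots,\hB_{i}^{I_{i}(Q_{i})}$ of the $i$-split form a chain that is strictly increasing in component $j$ and strictly decreasing in component $k$.

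Equation \eqref{eq:neighi} is exactly the conclusion of Case~2(i): splitting with respect to $i$ leaves the $i$-th upper bound of every descendant equal to $z^s_{i}$, so no descendant can serve as the $i$-neighbor of $\hB_{i}^{I_{i}(q)}$, and the argument there shows $\Bsi(\hB^{I_{i}(q)})\notin\mbsb$ and that this neighbor is inherited unchanged. The interior entries of \eqref{eq:neighj} and \eqref{eq:neighk} --- namely $\Bssj(\hB_{i}^{I_{i}(q)})=\hB_{i}^{I_{i}(q-1)}$ for $q\geq 2$ and $\Bssk(\hB_{i}^{I_{i}(q)})=\hB_{i}^{I_{i}(q+1)}$ for $q\leq Q_{i}-1$ --- are read off directly from the chain in Case~2(ii): the box immediately below $\hB_{i}^{I_{i}(q)}$ in the $j$-ordering (equivalently, immediately above in the $k$-ordering) is its unique neighbor, since every other descendant of the $i$-split lies on the wrong side in component $j$ (or $k$), and no box split with respect to a different component, nor any box outside $\mbsb$, can satisfy \eqref{neighk} while respecting the required strict inequalities.

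The delicate part, and the step I expect to carry the weight, concerns the two boundary entries $\Bssj(\hB_{i}^{I_{i}(1)})=\hB_{j}^{I_{j}(1)}$ and $\Bssk(\hB_{i}^{I_{i}(Q_{i})})=\hB_{k}^{I_{k}(Q_{k})}$, because here the neighbor is \emph{not} another descendant of the $i$-split but a descendant of a split with respect to $j$ (resp.\ $k$). I would argue as in Case~2(ii): for $q=1$ one has $u_{k}(\hB^{I_{i}(1)})=\bar{u}_{k}$, which forces the $j$-neighbor to have $k$-th coordinate $\bar{u}_{k}$, so the only admissible candidate is the non-redundant $j$-descendant $\hB_{j}^{I_{j}(1)}$; the case analysis around $\bar{u}_{i,k}$ then shows that $\hB_{j}^{I_{j}(1)}$ is either the $j$-split of $\hB^{I_{i}(1)}$ itself (when $u_{i}(\hB^{I_{i}(1)})=\bar{u}_{i,k}$, giving $I_{i}(1)=I_{j}(1)$ up to the chosen enumeration) or the unique box dominating it. The symmetric argument at $q=Q_{i}$, using $u_{j}(\hB^{I_{i}(Q_{i})})=\bar{u}_{j}$ together with $\bar{u}_{i,j}$, yields $\Bssk(\hB_{i}^{I_{i}(Q_{i})})=\hB_{k}^{I_{k}(Q_{k})}$ with the same domination caveat. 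Assembling \eqref{eq:neighi}--\eqref{eq:neighk} from these facts completes the proof; the only genuine bookkeeping is to track which of the two possibilities (equality of index sets versus the dominating box) occurs, which is precisely the content already settled when $I_{j}(1)$ and $I_{k}(Q_{k})$ were defined.
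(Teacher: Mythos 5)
Your proposal is correct and matches the paper exactly: the paper offers no separate argument for Corollary~\ref{cor:proof} but explicitly presents it as a summary of the neighbor identifications established in Cases~1 and~2 (parts (i) and (ii)) of the constructive proof of Lemma~\ref{lem:neighb}, which is precisely what you do, including the observation that Case~1 is the degenerate instance $P=Q_i=1$ and the careful treatment of the boundary entries $\Bssj(\hB_{i}^{I_{i}(1)})$ and $\Bssk(\hB_{i}^{I_{i}(Q_{i})})$ via $\bar{u}_{k}$, $\bar{u}_{j}$ and the dominating-box alternative.
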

Using Lemma~\ref{lem:neighb} we can derive an explicit formulation of the individual subsets~$V(B)$ for $m=3$:
\begin{lemma} \label{lem:Vboxrep}
Let Assumption~\ref{assump:znew2} hold. Then, for $m=3$, the individual subsets $V(B), B \in \mbs$, which are introduced in Definition~\ref{def:nonred}, can be represented as
$$V(B)=\{ z \in B_{0} : v(B) \leqq z < u(B)\}$$ 
with  
\begin{equation} \label{eq:repv}
 v_{i}(B):= \left\{
\begin{array}{ll}
u_{i}(\Bsi(B)), & \text{if} \; \Bsi(B) \neq \emptyset, \\
z_{i}^I, & \text{otherwise} 
\end{array}
\right. , \quad i \in \{1,2,3\}.
\end{equation}
\end{lemma}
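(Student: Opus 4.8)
The plan is to prove the claimed identity by establishing the two set inclusions separately, exploiting throughout that all boxes share the common lower bound $z^I$, so that each box is completely described by its upper bound and membership $z \in B$ is equivalent to $z^I \leqq z < u(B)$. Consequently a point $z \in B$ lies in $V(B)$ if and only if it is contained in no other box $\tB \in \mbs \setminus \{B\}$, i.e.\ if for every such $\tB$ there is a component in which $z$ violates the strict upper-bound constraint of $\tB$. Throughout I would invoke Lemma~\ref{lem:neighb} (this is where the restriction $m=3$ enters, via the structure ``one strictly larger component, one equal component''), which supplies, for each $i \in \{1,2,3\}$ with $u_i(B) > \underline{u}_i$, the existence and uniqueness of $\Bsi(B)$ together with the relations \eqref{neighi}--\eqref{neighimax}.

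For the inclusion $V(B) \subseteq \{z \in B_0 : v(B) \leqq z < u(B)\}$ I would argue by contraposition; the bound $z < u(B)$ is immediate from $V(B) \subseteq B$, so only $z \geqq v(B)$ needs work. Suppose $z \in B$ but $z_i < v_i(B)$ for some $i$. Since $z \in B \subseteq B_0$ forces $z_i \geq z_i^I$, the definition of $v_i(B)$ shows that the neighbor $\hB := \Bsi(B)$ must exist and $v_i(B) = u_i(\hB)$. I would then check $z \in \hB$ coordinatewise: component $i$ is handled by $z_i < u_i(\hB)$; the index $j \neq i$ with $u_j(\hB) > u_j(B)$ gives $z_j < u_j(B) < u_j(\hB)$; and the remaining index $k$ with $u_k(\hB) = u_k(B)$ gives $z_k < u_k(B) = u_k(\hB)$. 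Hence $z \in \hB \neq B$, so $z \notin V(B)$, which is the desired contrapositive.

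For the reverse inclusion I would take any $z$ with $v(B) \leqq z < u(B)$. As $v(B) \geqq z^I$ and $z < u(B)$, such a $z$ lies in $B$, and it remains to show that $z$ lies in no other box. Assuming $z \in \tB$ for some $\tB \neq B$, non-redundancy of $\mbs$ yields an index $a$ with $u_a(\tB) < u_a(B)$. The mere existence of such an index forces $u_a(B) > \underline{u}_a$, so $B^s_a(B)$ exists and $v_a(B) = u_a(B^s_a(B))$; by the maximality property \eqref{neighimax}, $u_a(B^s_a(B)) = \max\{u_a(B') : B' \in \mbs,\, u_a(B') < u_a(B)\} \geq u_a(\tB)$, since $\tB$ belongs to this set. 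Then $z_a \geq v_a(B) = u_a(B^s_a(B)) \geq u_a(\tB)$ contradicts $z \in \tB$, which requires $z_a < u_a(\tB)$. Hence $z \in V(B)$.

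I expect the reverse inclusion to be the main obstacle, because it is the step that genuinely couples the \emph{global} non-redundancy of the decomposition with the \emph{local} maximality characterization of the neighbor: non-redundancy only guarantees that \emph{some} coordinate of $\tB$ falls strictly below that of $B$, and one must then recognize that $v_a(B)$, being the largest admissible upper bound below $u_a(B)$, already blocks every competing box in that very coordinate. A secondary point requiring care is the bookkeeping for empty neighbors, which is resolved by the observation above that the existence of an index $a$ with $u_a(\tB) < u_a(B)$ automatically guarantees $u_a(B) > \underline{u}_a$ and hence a non-empty $B^s_a(B)$.
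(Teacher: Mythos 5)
Your proof is correct and rests on exactly the same ingredients as the paper's: the existence, the component structure \eqref{neighi}--\eqref{neighk}, and the maximality property \eqref{neighimax} of the neighbors from Lemma~\ref{lem:neighb}, combined with non-redundancy to guarantee that every competing box is blocked in some coordinate. The paper merely packages the two inclusions into a single per-coordinate identity, namely $\bB \setminus \bigl(\bigcup_{\tB \in \mathcal{B}_{s,i}} \tB\bigr) = \{z \in \bB : z_i \geq v_i(\bB)\}$ for $\mathcal{B}_{s,i} := \{B' \in \mbs : u_i(B') < u_i(\bB)\}$, so the two arguments are essentially the same.
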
 
\begin{proof}
For $\bB \in \mbs, s \geq 1$, by definition,
$$
V(\bB):=\bB \; \backslash \left( \bigcup_{\tB \in \mathcal{B}_{s}\backslash\{\bB\}} \tB  \right)
$$
holds. We consider the sets $\mathcal{B}_{s,i}:=\{ B \in \mbs : u_{i}(B) < u_{i}(\bB) \}$ for $i = 1,2,3$. For fixed $i \in\{1,2,3\}$, the following two cases can occur:
If $\mathcal{B}_{s,i} \neq \emptyset$, then, as shown in  Lemma~\ref{lem:neighb}, $\Bsi(\bB) \neq \emptyset$ and $\Bsi(\bB) \in \mathcal{B}_{s,i}$, where $u_{i}(\Bsi(\bB))=\max \{ u_{i}(B) : B \in\mathcal{B}_{s,i}\}$. 
Furthermore, as $u_{l}(\Bsi(\bB)) \geq u_{l}(\bB)$ for all $l \neq i$, 
$$
\bB \; \backslash \left( \bigcup_{\tB \in \mathcal{B}_{s,i}} \tB \right)
= \{z \in \bB : z_{i} \geq u_{i}(\Bsi(\bB)) \}.
$$
Otherwise, i.e., if $\mathcal{B}_{s,i} = \emptyset$, then, obviously, 
$\bB \; \backslash \left( \bigcup_{\tB \in \mathcal{B}_{s,i}} \tB \right)=\bB$. 
So, in both cases, it holds that
$$
\bB \; \backslash \left( \bigcup_{\tB \in \mathcal{B}_{s,i}} \tB \right)
= \{z \in \bB : z_{i} \geq v_{i}(\bB) \}$$
with 
$$
 v_{i}(\bB):= \left\{
\begin{array}{ll}
u_{i}(\Bsi(\bB)), & \text{if} \; \Bsi(\bB) \neq \emptyset, \\
z_{i}^I, & \text{otherwise.} 
\end{array}
\right.
$$
As every box $B \in \mbs \backslash \{ \bB \}$ belongs, due to the assumption of non-redundancy, at least to one set $\mathcal{B}_{s,i}, i \in \{1,2,3\},$ there does not exist any other box which can reduce $V(\bB)$ further. Thus,  we obtain the desired representation.
\qed 
\end{proof}
Lemma~\ref{lem:Vboxrep} shows that for $m=3$ the individual subset of a box can be represented as a box itself. As the upper bound of $V(B)$ and $B$ are the same, $V(B)$ can be described by its lower bound $v(B) \in \R^m$ only. 
Next we show, using Corollary~\ref{cor:proof},
how the lower bounds $v(B)$ can be updated in an iterative algorithm.
\begin{lemma} \label{lem:updateV}
Let Assumption~\ref{assump:znew2} be satisfied. We use the notation of Corollary~\ref{cor:proof}.
Let $\hB^{I_{i}(q)}_{i}, q=1,\dots,Q_{i}$, be the non-redundant boxes obtained from $\hB^{I_{i}(q)} \in \mbsb$ by a split with respect to $i \in \{1,2,3\}$.
Then the lower bound vectors $v(B) \in \R^m$ of these new boxes in $\mbss$ are determined by
\begin{align*}
v_{i}(\hB_{i}^{I_{i}(q)})&= \phantom{\;} v_{i}(\hB^{I_{i}(q)}) \quad \forall \, q=1,\dots,Q_{i},\\
v_{j}(\hB_{i}^{I_{i}(q)}) &=
\left\{
\begin{array}{ll}
u_{j}(\hB_{j}^{I_{j}(1)})=z^s_{j} \,\, & q=1,\\
u_{j}(\hB_{i}^{I_{i}(q-1)}) & \forall \, q=2,\dots,Q_{i},
\end{array}
\right.  
\\
v_{k}(\hB_{i}^{I_{i}(q)}) &=
\left\{
\begin{array}{ll}
u_{k}(\hB_{i}^{I_{i}(q+1)})  & \forall \, q=1,\dots,Q_{i}-1, \\ 
u_{k}(\hB_{k}^{I_{k}(Q_{k})})=z^s_{k} & q=Q_{i}.
\end{array}
\right.
\end{align*}
All individual subsets $V(B)$ of all $B \notin \mbsb$ remain unchanged.
\end{lemma}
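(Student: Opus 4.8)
The plan is to obtain this lemma as a direct bookkeeping consequence of Lemma~\ref{lem:Vboxrep} and Corollary~\ref{cor:proof}. Lemma~\ref{lem:Vboxrep} tells us that in any non-redundant decomposition each coordinate of the lower bound is the matching coordinate of the upper bound of a neighbor, namely $v_i(B)=u_i(\Bsi(B))$ when $\Bsi(B)\neq\emptyset$ and $v_i(B)=z_i^I$ otherwise, cf.\ \eqref{eq:repv}. Corollary~\ref{cor:proof} already identifies the three neighbors $\Bssi,\Bssj,\Bssk$ of every new box $\hB_i^{I_i(q)}\in\mbss$. So I would simply insert these neighbor identities into \eqref{eq:repv} and read off $v$. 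The only additional elementary fact needed is that a box created by a full $3$-split in direction $i$ carries $u_i=z_i^s$ in its splitting coordinate; this is immediate from Definition~\ref{def:msplit}.

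First I would treat the new boxes. For the splitting coordinate $i$, \eqref{eq:repv} applied in $\mbss$ together with \eqref{eq:neighi} gives $v_i(\hB_i^{I_i(q)})=u_i(\Bsi(\hB^{I_i(q)}))$, which by \eqref{eq:repv} applied in $\mbs$ is exactly $v_i(\hB^{I_i(q)})$ (the empty-neighbor case runs through the same chain with value $z_i^I$), establishing the first formula. For coordinate $j$ and $q=1$, \eqref{eq:neighj} yields $v_j(\hB_i^{I_i(1)})=u_j(\hB_j^{I_j(1)})$, and since $\hB_j^{I_j(1)}$ stems from a split in direction $j$ its $j$-th upper bound equals $z_j^s$; for $q\geq2$, \eqref{eq:neighj} gives $v_j(\hB_i^{I_i(q)})=u_j(\hB_i^{I_i(q-1)})$ directly. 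The coordinate $k$ is fully symmetric via \eqref{eq:neighk}: for $q<Q_i$ one gets $v_k(\hB_i^{I_i(q)})=u_k(\hB_i^{I_i(q+1)})$, while for $q=Q_i$ the neighbor $\hB_k^{I_k(Q_k)}$ again arises from a split in direction $k$, so its $k$-th coordinate is $z_k^s$. This reproduces the three displayed update rules; the only orderings entering are those of \eqref{orderincr}--\eqref{orderdecr}, already fixed in Corollary~\ref{cor:proof}.

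It remains to show $V(B)$ is unchanged for every $B\notin\mbsb$. Here I note that such a $B$ is not modified, so $u(B)$---hence the upper bound of $V(B)$---is fixed, and by Lemma~\ref{lem:Vboxrep} it suffices that $v(B)$ does not change. Fixing a coordinate $i$, I would invoke the two cases already proved inside Lemma~\ref{lem:neighb}: if $\Bsi(B)\notin\mbsb$ then case~(iii) gives $\Bssi(B)=\Bsi(B)$, so $v_i(B)$ is untouched; and if $\Bsi(B)=\hB\in\mbsb$ then case~(iv) replaces the neighbor by $\hB_j$ but records $u_i(\hB_j)=u_i(\hB)$, so $v_i(B)=u_i(\Bssi(B))$ keeps its old value. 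The no-neighbor situation $\Bsi(B)=\emptyset$ likewise persists, since by the argument of case~(iii) a newly created box could become the direction-$i$ neighbor of $B$ only at the cost of its parent dominating $B$ in $\mbs$, which is excluded by non-redundancy. I expect this last ``unchanged'' claim to be the only genuinely delicate point: although the neighbor box itself may be swapped when it lies in $\mbsb$, what must be verified is that the single upper-bound coordinate feeding $v_i(B)$ is preserved---and this is exactly the identity $u_i(\hB_j)=u_i(\hB)$ established in case~(iv). Everything else is substitution of Corollary~\ref{cor:proof} into \eqref{eq:repv}.
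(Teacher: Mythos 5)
Your proposal is correct and follows essentially the same route as the paper: the update formulas for the new boxes are read off by substituting the neighbor identities of Corollary~\ref{cor:proof} into the representation \eqref{eq:repv} of Lemma~\ref{lem:Vboxrep}, and the invariance of $V(B)$ for $B\notin\mbsb$ is obtained from cases~(iii) and~(iv) of the proof of Lemma~\ref{lem:neighb}, the key point in case~(iv) being precisely the identity $u_i(\hB_j)=u_i(\hB)$ that you single out. Your write-up is merely more explicit than the paper's (which states the first part as a direct consequence of Corollary~\ref{cor:proof}), and your extra remark on the persistence of the case $\Bsi(B)=\emptyset$ is a harmless completion of the same argument.
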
 
\begin{proof}
The update of $v(\hB^{I_{i}(q)}_{i})$ of all new boxes $\hB^{I_{i}(q)}_{i}, q=1,\dots,Q_{i}$, for some fixed $i \in \{1,2,3\}$ is derived directly from Corollary~\ref{cor:proof}. 
The individual subsets of all boxes which are not split in the current iteration do not change, as, according to the proof of Lemma~\ref{lem:neighb},
either $\Bssi(B)$ remains unchanged (Case~(iii)) or $\Bssi(B)=\hB_{j}$ (Case~(iv)), i.e., $u_{i}(B)$ remains unchanged.
\qed 
\end{proof}
Recall that we want to split a box $B \in \mbsb$ with respect to a component $\itkk$ if and only if the individual subset $V(B_{i})$ of the resulting box $B_{i}$ is non-empty, which is equivalent to $B_{i}$ being non-redundant. With the vector $v(B) \in \R^m$ at hand, this can be easily checked, as the following lemma shows.

\begin{lemma} \label{lem:vsplitcorrect} 
Let Assumption~\ref{assump:znew2} hold up to iteration $s-1$ for $s\geq 2$, i.e., let $\mbs$ be a correct, non-redundant decomposition of the search region obtained by iterative
$3$-splits. 
Let $z^s \in \NS$ satisfy Assumption~\ref{assump:znew2}~(1), and let $B_{i}$ be the box obtained from $B \in \mbsb$ by a split with respect to component $i \in \{1,2,3\}$. Then $B_{i}$ is non-redundant if and only if 
$z^s_{i} > v_{i}(B)$ holds. 
\end{lemma}
\begin{proof}
Consider a fixed $i \in \{1,2,3\}$.
\begin{itemize}
\item[``$\Rightarrow$'':] Let $B_{i}$ be non-redundant and assume that $z^s_{i} <v_{i}(B)$ holds. (The case $z^s_{i} = v_{i}(B)$ does not occur due to Assumption~\ref{assump:znew2}~(1).)
Then $v_{i}(B) > z_{i}^I$, and, thus, $v_{i}(B)=u_{i}(\Bsi(B))$ with $\Bsi(B) \neq \emptyset$. As $u_{l}(\Bsi(B)) \geq u_{l}(B)$ for all $l \neq i$, $z^s \in \Bsi(B)$ must hold. But then, $B_{i}$ would be redundant as it would be dominated by the box obtained from $\Bsi(B)$ by a split with respect to $i$, a contradiction to the assumption of non-redundancy.
\item[``$\Leftarrow$'':] Let $z^s_{i} > v_{i}(B).$ A split of $B$ with respect to $i$ yields $B_{i}=\{ z \in B : z_{i} <z_{i}^s \}.$ 
Assume that there exists $\tB_{i} \neq B_{i}$ which dominates $B_{i}$. 
As $\mbs$ is non-redundant and due to Lemma~\ref{lem:genredbox}, $\tB_{i}$ must result from a split with respect to $i$ from some box $\tB \in \mbsb,$ i.e., $z^s \in \tB$ must hold. 
As $B$ and $\tB$ are split with respect to $i$, $u_{i}(B_{i})=u_{i}(\tB_{i})=z^s_{i}$ holds, and, due to the assumption that $\tB_{i}$ dominates $B_{i}$, $u_{l}(\tB) \geq u_{l}(B)$ for all $l \neq i$. 
Now $B,\tB \in \mbs$ and $\mbs$ being non-redundant imply that $u_{i}(\tB)<u_{i}(B)$. 
This in turn means that $v_{i}(B) \geq u_{i}(\tB)$. 
But then $z^s_{i}>u_{i}(\tB),$ a contradiction to $z^s \in \tB$. 
It follows that $B_{i}$ is non-redundant. 
\end{itemize}
\end{proof}
Lemma~\ref{lem:vsplitcorrect} provides a tool for defining a split operation for tricriteria problems which generates all boxes that are necessary for maintaining the correctness of a decomposition, but avoids the generation of redundant boxes. 
We call the split based on the individual subsets $V(B)$ a $v$-split in the following.

\begin{definition}[$v$-split] \label{def:vsplit} 
Let Assumption~\ref{assump:znew2} hold up to iteration $s-1$ for $s\geq 2$, i.e., let $\mbs$ be a correct, non-redundant decomposition of the search region obtained by iterative $3$-splits, and let $z^s \in \NS$. 
We call the split of a box $B \in \mbsb$ with respect to components $i \in \{1,2,3\},$ for which
\begin{equation} \label{ineq:vsplit}
z^s_{i} \geq v_{i}(B)
\end{equation} 
holds, a $v$-split of $B$. 
\end{definition}
Note that equality in \eqref{ineq:vsplit} does not occur due to Assumption~\ref{assump:znew2}~(1). However, as Assumption~\ref{assump:znew2}~(1) will be removed in Section~\ref{subsec:generalize}, we present the $v$-split already at this point in this general form.
\begin{lemma}\label{lem:vsplitisnonredundant}
Let Assumption~\ref{assump:znew2}~(1),(2) hold. Then the iterative application of a $v$-split to every $B \in \mbsb$ in every iteration $s \geq 1$ yields a correct, non-redundant decomposition.
\end{lemma}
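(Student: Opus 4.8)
The plan is to prove by induction on $s$ that the $v$-split reproduces exactly the decomposition produced by the full $3$-split followed by removal of redundant boxes, so that correctness follows from Lemma~\ref{lem:correctmsplit} and non-redundancy follows from the equivalence established in Lemma~\ref{lem:vsplitcorrect}. The base case $s=1$ is immediate: the starting box $B_0$ is split into three boxes, all of which are non-redundant by Corollary~\ref{cor:oneboxsplit}, and here $v_i(B_0)=z_i^I$ for all $i$, so the $v$-split criterion $z^1_i \geq v_i(B_0)$ coincides with the condition $z^1_i > z_i^I$ used in the full split; hence the two operations generate the same boxes.

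For the inductive step I would assume that $\mbs$ is a correct, non-redundant decomposition obtained by iterative $3$-splits, equipped with correctly maintained lower-bound vectors $v(B)$ satisfying the representation of Lemma~\ref{lem:Vboxrep}. The key observation is that the $v$-split performs, for each $B \in \mbsb$ and each $i \in \{1,2,3\}$, exactly the split with respect to $i$ for which $z^s_i > v_i(B)$ holds, which by Lemma~\ref{lem:vsplitcorrect} is precisely the set of splits yielding a non-redundant box $B_i$. Thus the collection of boxes produced by the $v$-split is identical to the collection produced by applying a full $3$-split to every $B \in \mbsb$ and then deleting every redundant box. Since the latter decomposition is correct by Lemma~\ref{lem:correctmsplit} — deleting redundant boxes does not change the union $\bigcup_{B \in \mbss} B$, as a redundant box is contained in some other box of the decomposition — and is non-redundant by construction, the decomposition $\mbss$ produced by the $v$-split is correct and non-redundant as well.

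The one point requiring care is that Lemma~\ref{lem:vsplitcorrect} is stated under Assumption~\ref{assump:znew2}, whereas the present lemma invokes only parts (1) and (2) of that assumption; I would note that part (3), which prescribes that $\mbss$ arise from $\mbs$ by a full $3$-split with subsequent removal of all redundant boxes, is exactly the inductive hypothesis being carried along the induction, so it is available at each step and the argument is self-consistent. The remaining verification is that the lower bounds $v(B)$ are propagated correctly so that the criterion $z^s_i \geq v_i(B)$ can be evaluated in the next iteration; this is precisely the content of Lemma~\ref{lem:updateV}, which I would cite to close the induction by confirming that the updated decomposition again carries valid $v$-vectors.

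I expect the main obstacle to be purely bookkeeping rather than conceptual: one must confirm that the equivalence ``non-redundant $\Leftrightarrow z^s_i > v_i(B)$'' of Lemma~\ref{lem:vsplitcorrect} matches the weak inequality \eqref{ineq:vsplit} in the definition of the $v$-split. Under Assumption~\ref{assump:znew2}~(1), equality $z^s_i = v_i(B)$ never occurs (as remarked after Definition~\ref{def:vsplit}), so $z^s_i \geq v_i(B)$ and $z^s_i > v_i(B)$ coincide and the two characterizations agree; this reconciliation is the only step where the non-degeneracy assumption is genuinely used, and it is what makes the equivalence between the $v$-split and the redundancy-pruned full split exact.
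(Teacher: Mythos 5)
Your argument is correct and follows essentially the same route as the paper: the paper's entire proof consists of the two observations that Assumption~\ref{assump:znew2}~(1) makes the weak inequality $z^s_{i} \geq v_{i}(B)$ equivalent to the strict one, and that Lemma~\ref{lem:vsplitcorrect} then shows the $v$-split generates exactly the non-redundant boxes of the redundancy-pruned full $3$-split. Your explicit induction --- carrying Assumption~\ref{assump:znew2}~(3) as the inductive hypothesis and invoking Lemmas~\ref{lem:correctmsplit} and~\ref{lem:updateV} --- merely spells out what the paper leaves implicit.
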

\begin{proof}
Due to Assumption~\ref{assump:znew2}~(1), $z^s_{i} \geq v_{i}(B)$ is equivalent to $z^s_{i} > v_{i}(B)$. According to Lemma~\ref{lem:vsplitcorrect}, the $v$-split avoids exactly the generation of redundant boxes and, therefore, yields a correct, non-redundant decomposition. 
\qed 
\end{proof}
%
%
\subsection{An algorithm for tricriteria problems based on the $v$-split}
Algorithm~\ref{table:algoVsplit} implements the $v$-split. 
As in Algorithm~\ref{table:algofullsplit}, an initial box $B_{0}$ is computed, which is represented by its upper bound $u(B_{0})$. 
Additionally, for $B_{0}$ as well as for all other boxes $B$ which are generated in the course of the algorithm, the lower bound of the individual subset $v(B)$ is saved. 
Analogously to Algorithm~\ref{table:algofullsplit}, as long as the decomposition contains unexplored boxes, a box is selected and a  subproblem is solved. 
If the problem is infeasible, the selected box is deleted from the list of unexplored boxes. Otherwise, the nondominated point $z^s$ is saved and all boxes are determined that contain $z^s$. 
Now, different from Algorithm~\ref{table:algofullsplit}, $z^s$ is compared component\-wise to $v(B)$ for every $B \in \mbsb$. A split with respect to component $i$ is performed if and only if $z^s_{i} \geq v_{i}(B)$ and $z^s_{i} > z_{i}^I$ hold. If $v_{i}(B)>z^s_{i}$ for all $i \in \{1,2,3\}$, then $B$ is deleted. Finally, the vectors $v$ of all new boxes are updated according to Lemma~\ref{lem:updateV} and a new iteration starts.

Note that according to the proof of Lemma~\ref{lem:neighb} we can order all newly generated, non-redundant boxes resulting from a split with respect to component $i$ such that their upper bound values $u$ are increasing in one component $j \neq i$ and decreasing in the remaining component $k\neq i,j$.
Hence, in Line~\ref{algo:linetest} in the procedure \textsc{UpdateIndividualSubsets}, strict inequalities hold between the components of each pair of upper bounds. 
However, in order to make the algorithm also applicable when Assumption~\ref{assump:znew2}~(1) is removed, see Section~\ref{subsec:generalize} below, we formulate Algorithm~\ref{table:algoVsplit} already in a general form. 
Therefore, in Line~\ref{algo:linetest}, the strict inequalities are replaced by inequalities and in Lines~\ref{algo:linetest2} to~\ref{algo:linetest3} the case that the upper bound vectors of two boxes are equal is handled. 
Note that this case does not occur under Assumption~\ref{assump:znew2}~(1). 

According to Lemma~\ref{lem:vsplitisnonredundant}, Algorithm~\ref{table:algoVsplit} maintains a correct, non-redundant decomposition in each iteration. 


\begin{algorithm}[t]
\caption{Algorithm with $v$-split for $m=3$} \label{table:algoVsplit}
\begin{algorithmic}[1]
\INPUT Image of the feasible set $Z \subset \R^m$, implicitly given by some problem formulation
\State $\NS:=\emptyset$; $\delta >0$; 
\State \Call{InitStartingBoxVsplit}{$Z,\delta$}; \Comment{Initialize starting box}
\State $s:=1$;
\While {$\mathcal{B}_{s} \neq \emptyset$} 
\State Choose $\bB \in \mbs;$ \label{algoV:select} \Comment{Select a box from the decomposition}
\State $z^{s}:=opt(Z,u(\bB))$; \Comment{Solve subproblem}
\If {$z^{s} = \emptyset$}  \Comment{Subproblem infeasible} \label{table:algoVsplitempty}
\State $\mbss:=\mbs \backslash \{ \bB\}$;   \Comment{Remove (empty) box} 
\Else
\State $\NS:=\NS \cup \{z^s\};$ \Comment{Save nondominated point}
\State $\mbss:=\mbs;$ \Comment{Copy set of current boxes}
\State \Call{GenerateNewBoxesVsplit}{$\mbs,z^s,z^I,\mbss$};
\State \Call{UpdateIndividualSubsets}{$\mathcal{S}_{1},\mathcal{S}_{2},\mathcal{S}_{3},\mbss$};
\EndIf
\State $s:=s+1$;
\EndWhile
\State \Return Set of nondominated points $\NS$
\algstore{bkbreak}
\end{algorithmic}
\end{algorithm}
\begin{algorithm} 
\begin{algorithmic}[1]
\algrestore{bkbreak}
\Procedure{InitStartingBoxVsplit}{$Z,\delta$}
\For {$j=1$ to $3$} \Comment{Compute bounds on $Z$}
\State $z^I_{j}:=\min \{z_{j} \, : \, z \in Z \}$;  
\State $z^M_{j}:=\max \{z_{j} \, : \, z \in Z \}+\delta$; 
\State $v_{j}(B_{0}):=z^I_{j}$; $u_{j}(B_{0}):=z^M_{j};$
\EndFor
\State $\mathcal{B}_{1}:=\{ B_{0}\}$; \Comment{Initialize set of boxes}
\State \Return $\mathcal{B}_{1}$
\EndProcedure
\Statex
\Procedure{GenerateNewBoxesVsplit}{$\mbs,z^s,z^I,\mbss$}
\State $\mathcal{S}_{i}:=\emptyset$, $i=1,2,3;$ 
\ForAll {$B \in \mbs$}
\If {$z^s < u(B)$}  \Comment{Point is contained in box}
\For {$i=1$ to $3$}  \Comment{Apply $v$-split}
\If {$ z^s_{i} \geq v_{i}(B)$ and $ z^s_{i} > z^I_{i}$}  
\State $u(B_{i}):=u(B); v(B_{i}):=v(B)$;  \Comment{Create a copy of $B$} 
\State $u_{i}(B_{i}):=z^s_{i};$ \Comment{Update upper bound} 
\State $\mathcal{S}_{i}:=\mathcal{S}_{i} \cup \{B_{i}\};$ \Comment{Save new box in respective set $\mathcal{S}_{i}$} 
\EndIf 
\EndFor
\State $\mbss:=\mbss \backslash \{ B\};$ \Comment{Remove $B$} 
\EndIf 
\EndFor
\State \Return $\mbss, \mathcal{S}_{1}, \mathcal{S}_{2}, \mathcal{S}_{3};$
\EndProcedure
\Statex
\Procedure{UpdateIndividualSubsets}{$\mathcal{S}_{1},\mathcal{S}_{2},\mathcal{S}_{3},\mbss$}
\For {$i=1$ to $3$} 
\State $Q:=|\mathcal{S}_{i}|;$ 
\State Sort all boxes $B_{i}^{I_{i}(q)}, q=1,\dots,Q_{i},$ in $\mathcal{S}_{i}$ such that for $j,k \neq i$ \label{algo:linetest}
\Statex \hspace{1.2cm} $u_{j}(B_{i}^{I_{i}(1)}) \leq u_{j}(B_{i}^{I_{i}(2)}) \leq \dots \leq u_{j}(B_{i}^{I_{i}(Q_{i})}) $ and
\Statex \hspace{1.2cm} $u_{k}(B_{i}^{I_{i}(1)}) \geq u_{k}(B_{i}^{I_{i}(2)}) \geq \dots \geq u_{k}(B_{i}^{I_{i}(Q_{i})})$;
\If {$u(B_{i}^{I_{i}(q)}) = u(B_{i}^{I_{i}(q+1)})$ for some $q=1,\dots,Q_{i}-1$} \label{algo:linetest2}
\State (re)sort $B_{i}^{I_{i}(q)}$ and $B_{i}^{I_{i}(q+1)}$ such that 
\Statex \hspace{1.5cm} $v_{j}(B_{i}^{I_{i}(q)}) \leq v_{j}(B_{i}^{I_{i}(q+1)})$ and 
 $v_{k}(B_{i}^{I_{i}(q)}) \geq v_{k}(B_{i}^{I_{i}(q+1)})$; 
\EndIf \label{algo:linetest3} 
\State Set $v_{j}(B_{i}^{I_{i}(1)}):=z^s_{j}$; $v_{k}(B_{i}^{I_{i}(Q_{i})}):=z^s_{k};$ \Comment{Update $v$}
\For {$q=2$ to $Q_{i}$} 
\State $v_{j}(B_{i}^{I_{i}(q)}):=u_{j}(B_{i}^{I_{i}(q-1)})$; $v_{k}(B_{i}^{I_{i}(q-1)}):=u_{k}(B_{i}^{I_{i}(q)});$ 
\EndFor 
\State $\mbss:=\mbss \cup \mathcal{S}_{i} ;$ \Comment{Append new boxes}
\EndFor
\State \Return $\mbss$
\EndProcedure
\end{algorithmic}
\end{algorithm}


\begin{example}[Application of Algorithm~\ref{table:algoVsplit}] \label{ex:basicvsplit}
Consider again the tricriteria problem of Example~\ref{ex:split} with initial search region 
$B_0:=\{z \in Z \, : \, 0 \leq z_{i} \leq 5 \; \forall \, i=1,2,3 \}$
and $V(B_{0})=B_{0}$, thus, $v(B_{0})=(0,0,0)^{\top}$. 
Consider $z^{1}=(2,2,2)^{\top}$. The $v$-split applied to the initial box equals a full $3$-split and, thus, results in
$$
B_{1,i}:=\{z \in B_{0} \, : \, z_{i} < 2 \}, \; i=1,2,3. 
$$
The corresponding individual subsets are
$$
V(B_{1,i}):=\{z \in B_{1,i} \, : \, z_{j} \geq 2 \; \forall \, j \neq i\}, \; i=1,2,3, 
$$
thus, $v(B_{11})=(0,2,2)^{\top}$, $v(B_{12})=(2,0,2)^{\top}$ and $v(B_{13})=(2,2,0)^{\top}$. 
Let $z^{2}=(1,1,4)^{\top}$.  
It holds that $z^{2} \in B_{11}$ as well as $z^{2} \in B_{12}$, but $z^{2} \notin B_{13}$. 
Consider first the $v$-split in $B_{11}$: As $z^2_{1} \geq v_{1}(B_{11})$, $z^2_{2} \not\geq v_{2}(B_{11})$ and $z^2_{3}\geq v_{3}(B_{11}),$ $B_{11}$ is split with respect  to the first and third component into  
$$
B_{21}  :=\{z \in B_{11} \, : \, z_{1} < 1\}  \quad \text{and} \quad 
B_{23}  :=\{z \in B_{11} \, : \, z_{3}<4 \}
$$
and $\mathcal{S}_{1}=\{ B_{21}\}$, $\mathcal{S}_{2}=\emptyset$ and $\mathcal{S}_{3}=\{ B_{23}\}$.
Applying the $v$-split to $B_{12}$ results in a split with respect to the second and third component into
$$
B_{22}  :=\{z \in B_{12} \, : z_{2}<1\} \quad \text{and} \quad 
B_{23}'  :=\{z \in B_{12}\, : \, z_{3}<4 \} 
$$
and $\mathcal{S}_{1}=\{ B_{21}\}$, $\mathcal{S}_{2}=\{ B_{22}\}$ and $\mathcal{S}_{3}=\{ B_{23},B_{23}'\}$.
Note that the redundant boxes which were obtained with the full $3$-split in Example~\ref{ex:split} are not generated by the $v$-split.

Finally, the individual subsets of the new boxes of each set $\mathcal{S}_{i}, i \in \{1,2,3\}$, are updated: Box $B_{21}$ is the only box generated for $i=1$, box $B_{22}$ the only one for $i=2$. Therefore,
$v(B_{21})=(v_{1}(B_{11}),z^2_{2},z^2_{3})^{\top}=(0,1,4)^{\top}$ and
$v(B_{22})=(z^2_{1},v_{2}(B_{12}),z^2_{3})^{\top}=(1,0,4)^{\top}$.
Boxes $B_{23}$ and $B_{23}'$ are both generated by a split with respect to the third component $i=3$. We can order the upper bounds of the boxes $u(B_{23})=(2,5,4)^{\top}$ and $u(B_{23}')=(5,2,4)^{\top}$ increasingly with respect to component $j=1$ 
and, at the same time, decreasingly with respect to $k=2$,
thus, $B_{3}^{I(1)}:=B_{23}$ and $B_{3}^{I(2)}:=B_{23}'$ and then set
$$
\begin{array}{ll}
v_{1}(B_{23})=z^2_{1}=1, & \quad v_{2}(B_{23}')=z^2_{2}=1, \\
v_{2}(B_{23})=u_{2}(B_{23}')=2,  & \quad v_{1}(B_{23}')=u_{1}(B_{23})=2.
\end{array}
$$
The third component is not changed, so $v(B_{23})=(1,2,2)^{\top}$ and $v(B_{23}')=(2,1,2)^{\top}$. 
\end{example}


\subsection{A linear bound on the number of subproblems for $m=3$}

In the following, we will bound the number of boxes generated in the course of Algorithm~\ref{table:algoVsplit}. 
If a box $B \in \mbs$ contains the current point $z^s$, i.e., if $B \in \mbsb$, then we can make the following assertion concerning the neighbors of $B$:
\begin{lemma} \label{lem:splitneighbor}
Let $s \geq 1$ be an iteration of Algorithm~\ref{table:algoVsplit} in which a nondominated point is generated, i.e., $z^s \neq \emptyset$. 
Consider any $B \in \mbsb$. 
We denote by $J_{B} \subseteq \{1,2,3\}$ the index set of all components with respect to which $B$ is split, and by $\overline{J}_{B}:=\{1,2,3\} \backslash J_{B}$ the complement of $J_{B}$. 
Then the following holds:
\begin{enumerate}
\item If $\JB \neq \emptyset$, then for every $j \in \JB$, the neighbor $\Bsj(B)$ exists  and contains $z^s$, i.e., $\Bsj(B) \neq \emptyset$ and $\Bsj(B) \in \mbsb$ holds for every $j \in \JB$.
\item If $\JB = \emptyset$, then $\mbsb=\{B\}$ holds.
\end{enumerate} 
\end{lemma}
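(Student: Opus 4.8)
The plan is to analyze the split criterion directly. Recall that $B$ is split with respect to component $i$ precisely when $z^s_i \geq v_i(B)$ (equivalently $z^s_i > v_i(B)$ under Assumption~\ref{assump:znew2}~(1)). So $j \in \JB$ means $z^s_j > v_j(B)$, while $j \in \overline{J}_B$ means $z^s_j < v_j(B)$. The whole argument should pivot on translating the geometric claim about neighbors into these inequalities, using the explicit representation $v_j(B) = u_j(\Bsj(B))$ from Lemma~\ref{lem:Vboxrep} whenever the neighbor exists.

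For part~(1), fix $j \in \JB$. First I would argue that $\Bsj(B)$ cannot be empty: if it were, then by the convention in Lemma~\ref{lem:neighb} we would have $u_j(B) = \underline{u}_j$ and $v_j(B) = z_j^I$, forcing $z^s_j > z^I_j$; but one must then rule out that the split is actually suppressed. The cleaner route is to use $v_j(B) = u_j(\Bsj(B))$ directly: since $j \in \JB$ gives $z^s_j > v_j(B) = u_j(\Bsj(B))$, the neighbor must exist (otherwise $v_j(B) = z^I_j$, and I must check this case leads to a contradiction with $u_j(B) = \underline u_j$, i.e. with $j$ being a genuine split component). Having established existence, I would show $\Bsj(B) \in \mbsb$, i.e. $z^s < u(\Bsj(B))$ componentwise. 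By the neighbor properties~\eqref{neighi}--\eqref{neighk}: in component $j$, $u_j(\Bsj(B)) = v_j(B) < z^s_j$ is \emph{false}—so here is the subtlety: we need $z^s_j < u_j(\Bsj(B))$, yet $z^s_j > u_j(\Bsj(B))$. This means the naive reading fails, and I expect the correct statement uses that $\Bsj(B)$ agrees with $B$ in the two non-$j$ components up to the neighbor inequalities, so containment of $z^s$ follows from $z^s \in B$ together with component comparisons.

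\textbf{The main obstacle} will be resolving exactly this tension and getting the neighbor containment right. I would proceed as follows. For $j \in \JB$, the neighbor $\Bsj(B)$ satisfies $u_j(\Bsj(B)) < u_j(B)$, and for the other two indices $u_k(\Bsj(B)) \geq u_k(B)$ with strict inequality in one of them. Since $z^s \in B$ means $z^s_l < u_l(B)$ for all $l$, we get $z^s_k < u_k(B) \leq u_k(\Bsj(B))$ for $k \neq j$ immediately. The delicate component is $j$ itself: I need $z^s_j < u_j(\Bsj(B)) = v_j(B)$, which contradicts $z^s_j > v_j(B)$. This signals that I have the neighbor indexing inverted, so the honest fix is to recognize that the relevant neighbor bounding $V(B)$ from below in component $j$ is the one whose $u_j$ lies just below $u_j(B)$, and the containment $z^s \in \Bsj(B)$ should instead be derived from the split structure of the \emph{current} iteration: $\Bsj(B)$ and $B$ are both split because they both lie above $z^s$, so I would appeal to Lemma~\ref{lem:splitneighbor}'s setting together with Corollary~\ref{cor:nonred} to show that a split of $B$ in direction $j$ forces a companion box containing $z^s$.

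For part~(2), the argument is short: if $\JB = \emptyset$ then $z^s_i < v_i(B)$ for all three $i$, so $V(B) = \{z : v(B) \leqq z < u(B)\}$ would have $z^s$ strictly below its lower corner, meaning $z^s$ sits in the part of $B$ already covered by $B$'s neighbors. I would show that if any other box $\tilde B \in \mbsb$ existed, then one of its split components would, via the neighbor relation, force $z^s_i \geq v_i(B)$ for some $i$, contradicting $\JB = \emptyset$; concretely, $z^s \in B \cap \tilde B$ with non-redundancy gives indices where $u(B)$ and $u(\tilde B)$ cross, and tracing these through the $v$-definition produces the needed inequality. Hence $B$ is the unique box containing $z^s$, i.e. $\mbsb = \{B\}$.
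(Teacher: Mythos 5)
There is a genuine gap, and it stems from a single misreading that derails both parts: you treat $\JB$ as the set of components with respect to which $B$ \emph{is} split, whereas the lemma defines $J_B$ as the split set and $\JB:=\{1,2,3\}\setminus J_B$ as its \emph{complement}, i.e.\ the components with respect to which $B$ is \emph{not} split. Your opening translation ``$j\in\JB$ means $z^s_j>v_j(B)$'' is therefore exactly backwards; the correct translation is $z^s_j<v_j(B)$ for $j\in\JB$. The ``tension'' you then run into in part~(1) --- needing $z^s_j<u_j(\Bsj(B))=v_j(B)$ while believing $z^s_j>v_j(B)$ --- is not a sign of inverted neighbor indexing; it is precisely the symptom of the inverted definition. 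With the correct reading the paper's argument is short and direct: for $j\in\JB$ one has $v_j(B)>z^s_j\geq z^I_j$, so by the representation \eqref{eq:repv} the neighbor $\Bsj(B)$ exists with $u_j(\Bsj(B))=v_j(B)>z^s_j$, and since $u_l(\Bsj(B))\geq u_l(B)>z^s_l$ for $l\neq j$, containment $z^s\in\Bsj(B)$ follows immediately. Your proposed ``fix'' (appealing to Corollary~\ref{cor:nonred} and a ``companion box'' forced by the current split) does not close this; it is a sketch of a different and unsubstantiated mechanism.

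Part~(2) suffers the same inversion with worse consequences: $\JB=\emptyset$ means $B$ is split with respect to \emph{all three} components, i.e.\ $z^s_i\geq v_i(B)$ for every $i$, so $z^s$ lies in the individual subset $V(B)$. Since by Lemma~\ref{lem:neighb} every other box $\tB$ satisfies $u_l(\tB)\leq v_l(B)$ for some $l$, one gets $z^s_l\geq v_l(B)\geq u_l(\tB)$ and hence $z^s\notin\tB$, which gives $\mbsb=\{B\}$. Under your reading ($z^s_i<v_i(B)$ for all $i$), the point $z^s$ would lie \emph{outside} $V(B)$ and thus necessarily inside some other box of the decomposition, so the conclusion $\mbsb=\{B\}$ you are trying to reach would in fact be false; the argument you sketch for it cannot be repaired without first correcting the definition of $\JB$.
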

\begin{proof}
Let $B \in \mbsb$. By definition of the $v$-split, it holds that $z^s < u(B), z^s_{j} \geq v_{j}(B)$ for every $j \in J_{B}$ and $z^s_{j} < v_{j}(B)$ for every $j \in \JB$. Thus, $v_{j}(B)>z^I_{j}$ holds for every $j \in \JB$. This, however, implies that $\Bsj(B) \neq \emptyset$ for every $j \in \JB$, see the update of $v$ in \eqref{eq:repv}.
  
First, let $\JB \neq \emptyset$. Then, for fixed $j \in \JB$ and according to Definition~\ref{def:neighbors}, $u_{j}(\Bsj(B)) \leq u_{j}(B)$ and $u_{l}(\Bsj(B)) \geq u_{l}(B)$ for all $l \neq j$. 
As $u_{j}(\Bsj(B)) = v_{j}(B)>z^s_{j},$ $\Bsj(B) \in \mbsb$ holds.

Now, consider the case $\JB = \emptyset$. 
Then, due to Lemma~\ref{lem:neighb}, every box $\tB \in \mbs\backslash\{B\}$ has upper bound $u_{l}(\tB) \leq v_{l}(B)$ for at least one $l \in \{1,2,3\}$. This implies that $z^s \notin \tB$ for any $\tB \in \mbs \backslash \{ B\},$ thus, $\mbsb=\{B\}$. 
\qed 
\end{proof}
 
With the help of Lemma~\ref{lem:splitneighbor}, we can bound the number of new boxes that are generated in each iteration of Algorithm~\ref{table:algoVsplit}. 

\begin{lemma} \label{lem:decompnonred}
In every iteration $s \geq 1$ of Algorithm~\ref{table:algoVsplit} in which a  nondominated point $z^s$ is found, i.e., $z^s \neq \emptyset$, the number of boxes in the decomposition increases by at most two.
\end{lemma}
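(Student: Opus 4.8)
The plan is to count precisely how many boxes are removed and created in the iteration, and then to reduce the whole bound to a connectivity statement about a graph built on $\mbsb$. When $z^s$ is found, exactly the boxes of $\mbsb$ are removed, and each $B\in\mbsb$ is replaced by its non-redundant descendants, one for every component in the set $J_B$ of directions in which the $v$-split actually splits $B$; by Lemma~\ref{lem:vsplitcorrect} (together with the guard $z^s_i>z^I_i$) these are precisely the $i$ with $z^s_i>v_i(B)$, and $|J_B|\le 3$. Hence the number of boxes changes by $\sum_{B\in\mbsb}\bigl(|J_B|-1\bigr)$. If $|\mbsb|=1$, this single box is split into at most $m=3$ descendants, so the change is at most $2$ and we are done; so assume $|\mbsb|\ge 2$.

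For $|\mbsb|\ge 2$ I would introduce the undirected neighbor graph $\hat G$ on the vertex set $\mbsb$, joining $B$ to $\Bsj(B)$ for each $j\in\JB$. By Lemma~\ref{lem:splitneighbor}(1) these neighbors again lie in $\mbsb$, whereas for $j\in J_B$ the direction-$j$ neighbor, if it exists at all, has $u_j\le z^s_j$ and is therefore not in $\mbsb$. The point needing care is that this relation is symmetric: if $\Bsj(B)=B'$ with the shape $u_j(B')<u_j(B)$, $u_{j'}(B')>u_{j'}(B)$ and $u_k(B')=u_k(B)$ guaranteed by Lemma~\ref{lem:neighb}, then any box lying strictly between $B$ and $B'$ in component $j'$ would either dominate $B$ or beat $B'$ as the maximal direction-$j$ neighbor of $B$, both impossible by non-redundancy and the uniqueness in Lemma~\ref{lem:neighb}; hence $B=B^s_{j'}(B')$. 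Consequently $\deg_{\hat G}(B)=|\JB|=3-|J_B|$, so a handshake count gives $\sum_{B}|J_B|=3|\mbsb|-2\,|E(\hat G)|$ and
\[
\sum_{B\in\mbsb}\bigl(|J_B|-1\bigr)=2\bigl(|\mbsb|-|E(\hat G)|\bigr).
\]
In the exceptional case that some $z^s_i$ equals $z^I_i$, the ideal-point guard suppresses a split without creating a neighbor, which only lowers the left-hand side and hence only strengthens the bound.

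It therefore suffices to show $|E(\hat G)|\ge|\mbsb|-1$, i.e.\ that $\hat G$ is \emph{connected}: then the displayed quantity is at most $2$, which is exactly the claim. Establishing connectivity is the step I expect to be the main obstacle, since $\hat G$ is defined purely through the neighbor relation and a priori its components could be several separate paths or cycles.

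The route I would take for connectivity is geometric. Every $B\in\mbsb$ contains $z^s$, so the truncated boxes $\{z:z^s\leqq z<u(B)\}$ are non-degenerate and all share the common corner $z^s$; hence their union $R$ is connected. In generic position (Assumption~\ref{assump:znew2}(1)) two such truncations meet in a full two-dimensional facet exactly when the underlying boxes are neighbors in the sense of Definition~\ref{def:neighbors}, so that $\hat G$ is precisely the facet-adjacency graph of a box tiling of the connected set $R$, and such an adjacency graph is connected. A more combinatorial alternative is a monovariant argument: from any $B\in\mbsb$ that is not $u_1$-maximal in $\mbsb$ one exhibits a neighbor in $\mbsb$ with strictly larger first coordinate, so that iterating the move reaches the unique $u_1$-maximal box and every vertex connects to it. Either way, once connectivity is in place the counting identity above yields the increase of at most two, which together with the trivial single-box case proves the lemma.
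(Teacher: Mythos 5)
Your counting framework is sound and, in fact, cleanly formalizes what the paper does: the net change is $\sum_{B\in\mbsb}(|J_B|-1)$, the neighbors indexed by $\JB$ lie in $\mbsb$ (Lemma~\ref{lem:splitneighbor}), and with symmetry of the neighbor relation the handshake identity reduces the claim to $|E(\hat{G})|\geq|\mbsb|-1$. The problem is that you have correctly located the load-bearing step --- connectivity of $\hat{G}$ --- and then not proved it. Neither of your two proposed routes closes the gap. The geometric route fails at the word ``tiling'': the truncated boxes $\{z: z^s\leqq z<u(B)\}$, $B\in\mbsb$, all contain the full-dimensional set $\{z: z^s\leqq z<\min_B u(B)\}$, so they overlap pairwise in open regions rather than meeting along facets, and the facet-adjacency argument for tilings does not apply (the individual subsets $V(B)$, which \emph{are} pairwise disjoint, do not cover the region either). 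The monovariant route is a bare assertion: the neighbor $\Bsj(B)$ for $j\in\JB$ exceeds $B$ in exactly one component $j'$ and \emph{equals} it in the third, and nothing guarantees that $j'=1$ for some $j\in\JB$; establishing that a suitable neighbor with strictly larger $u_1$ always exists would require essentially the structural case analysis you are trying to avoid.

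It is also worth noting that connectivity is stronger than what is needed ($|E|\geq|\mbsb|-1$ also holds for disconnected graphs containing cycles), and the paper exploits exactly this slack. Its proof never establishes full connectivity. Instead it first shows by a direct argument that no two boxes of $\mbsb$ can be split with respect to the same \emph{pair} of components, so at most three boxes are `2-boxes', i.e., at most three vertices of $\hat{G}$ have degree one; if there are at most two of them the count already gives an increase of at most two, and only in the case of exactly three 2-boxes does the paper invoke the neighborhood structure (a short tracing/parity argument on degrees $3-|J_B|$) to produce at least one `0-box', i.e., a box deleted without replacement, which restores the bound. To repair your proof you would either have to supply a genuine proof of connectivity of $\hat{G}$ from Lemma~\ref{lem:neighb}, or retreat to the paper's weaker and more easily verified facts: at most three degree-one vertices, and a degree-three vertex whenever there are three of them.
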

\begin{proof}
If there exists a box $B \in \mbsb$ which is split with respect to all three components, then, using Lemma~\ref{lem:splitneighbor}, $\mbsb=\{B\}$ holds, thus, $|\mbsb|=1$. In this case, the box $B$ is removed and replaced by three new boxes in the decomposition, and, thus, the number of boxes in the decomposition increases by two.

It follows that if $|\mbsb|>1$, then every $B \in \mbsb$ is split with respect to at most two components.
Let $|\mbsb|>1$ and let $B \in \mbsb$ be split with respect to two components $i,j \in \{1,2,3\}$, $j \neq i$. Then, for all other boxes $\tB \in \mbs \backslash\{B\}$ it holds that $u_{l}(\tB) \leq v_{l}(B)$ for some $l \in \{1,2,3\}$. 
If $l=i$, then $u_{i}(\tB) \leq v_{i}(B) \leq z^s_{i}$, thus, the box is not split with respect to $i$. Analogously, if $l=j$, then $u_{j}(\tB) \leq v_{j}(B) \leq z^s_{j}$, thus, the box is not split with respect to $j$. 
If $l=k$ (with $k\neq i,j$), then, for any $\tB$ satisfying $u_{k}(\tB) \leq v_{k}(B)$ it holds that $v_i(\tB) \geq u_i(B)$ or $v_j(\tB) \geq u_j(B)$, thus, $\tB$ can not be split with respect to both components $i$ and $j$. 

Therefore, if two boxes are split with respect to two components, these components must differ in one component. This implies that in one iteration, at most three boxes are split with respect to two components. Any other boxes in $\mbsb$ are split with respect to at most one component. 

In case that three boxes are split with respect to two components, six new boxes would replace three old ones, thus, the number of boxes would increase by three. So it remains to show that in this case, at least one box $B$ in $\mbsb$ is removed without being split, i.e., $v(B)>z^s$ holds for at least one $B \in \mbsb$. In other words, we have to prove the existence of a '0-box', i.e., a box, which is contained in $\mbsb$, but is not split with respect to any component.

To this end, we assume to the contrary that $\mbsb$ contains three boxes which are split with respect to two components ('2-boxes'), respectively, but that no '0-box' exists. From Lemma~\ref{lem:splitneighbor} we see that
a '2-box' has exactly one neighbor in $\mbsb$, as $\JB$ contains exactly one index. A '1-box' has exactly two neighbors in $\mbsb$, while all three neighbors are contained in $\mbsb$ in case of a '0-box'.
Now, starting from a '2-box', one uniquely defined neighbor of it must be in $\mbsb$. If that box is also a '2-box' (see Figure~\ref{fig:branches} on the left), then no neighbor of the latter box is in $\mbsb$. The third '2-box' would require a neighbor in $\mbsb$, but only '1-boxes' are available, which require a second neighbor in turn. Thus, a fourth '2-box' would be needed, which, however, does not exist. Therefore, the three '2-boxes' must all be connected by one structure of neighbors. But this implies that there exists exactly one '0-box' connecting the three branches emerging from each '2-box' (see Figure~\ref{fig:branches} on the right).
\qed 
\end{proof}

\begin{figure}
\begin{minipage}{.35\textwidth}
\begin{center}
\includegraphics{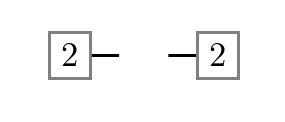}
\end{center} 
\end{minipage}
\begin{minipage}{.6\textwidth}
\begin{center}
\includegraphics{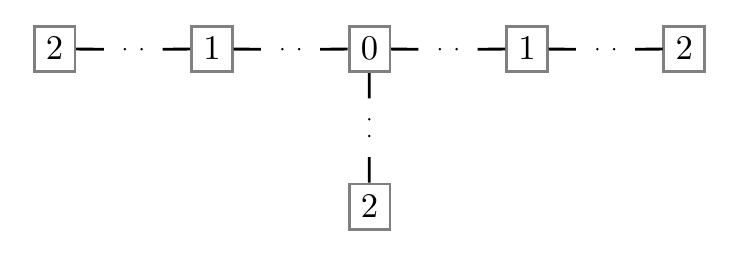}
\end{center}
\end{minipage}
\caption{Possible neighborhood structures of boxes in $\mbsb$. Left figure: $\mbsb$ contains two '2-boxes'; right figure: $\mbsb$ contains three '2-boxes'.} \label{fig:branches}
\end{figure}

\begin{theorem} \label{cor:runtime}
Let a problem with a finite set of nondominated points be given.  
After having computed the starting box based on the ideal point and a global upper bound on $Z$, Algorithm~\ref{table:algoVsplit} requires the solution of at most $3 |\NS|-2$ subproblems in order to generate the entire nondominated set $\NS$. 
\end{theorem}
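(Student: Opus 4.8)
The plan is to convert the bound on subproblems into a counting argument on the boxes created by Algorithm~\ref{table:algoVsplit}. First I would observe that every pass through the while-loop solves exactly one subproblem, whose outcome is of one of two kinds: either a new nondominated point is returned, or the selected box is declared empty and discarded. Since each point of $\NS$ is returned exactly once (once found, it is removed from all boxes by the $v$-split), there are precisely $|\NS|$ iterations of the first kind. Writing $E$ for the number of iterations of the second kind, the total number of subproblems is $|\NS|+E$, so it suffices to prove $E\le 2|\NS|-2$.

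To control $E$, I would track the size $n_s:=|\mbs|$ of the decomposition, which starts at $n_1=1$ and ends at $0$. Let $P$ denote the index set of the point-returning iterations. In a point-iteration the net change $\Delta_s:=n_{s+1}-n_s$ is at most $2$ by Lemma~\ref{lem:decompnonred}, while an empty-iteration contributes exactly $-1$. Summing all changes yields $\sum_{s\in P}\Delta_s-E=-1$, i.e.\ $E=1+\sum_{s\in P}\Delta_s$. The crude estimate $\Delta_s\le 2$ alone gives only $E\le 2|\NS|+1$ and hence the weaker bound $3|\NS|+1$; the entire difficulty is to improve the total net gain to $\sum_{s\in P}\Delta_s\le 2|\NS|-3$, that is, to locate three units of slack.

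The slack comes from the ideal point. Since $z^I_i=\min\{z_i:z\in Z\}$ is attained by a nondominated point, for each $i\in\{1,2,3\}$ there is a point $z^{(i)}\in\NS$ with $z^{(i)}_i=z^I_i$. The key observation is that any box $B$ containing $z^{(i)}$ satisfies $v_i(B)=z^I_i$: from $z^{(i)}\in B$ we get $v_i(B)\le z^{(i)}_i=z^I_i$, while always $v_i(B)\ge z^I_i$. Hence, when $z^{(i)}$ is the point found in iteration $s$, no box in $\mbsb$ is split with respect to component $i$, because Algorithm~\ref{table:algoVsplit} performs an $i$-split only when $z^s_i>z^I_i$; writing $\Delta_s=\sum_{B\in\mbsb}\bigl(|J_B|-1\bigr)$ this means $i\notin J_B$ for every $B\in\mbsb$, even though the $v$-split criterion $z^s_i\ge v_i(B)$ is met there with equality. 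I would then sharpen Lemma~\ref{lem:decompnonred} to $\Delta_s\le 2-t_s$, where $t_s$ counts the (box, direction) pairs whose $i$-split is blocked solely by the condition $z^s_i>z^I_i$; each such blocked split would have produced the box $\{z\in B:z_i<z^I_i\}$, which cannot contain feasible points and is never created, so it lowers the net gain by one. As the three points $z^{(1)},z^{(2)},z^{(3)}$ block splits in three distinct directions, $\sum_{s\in P}t_s\ge 3$, whence $\sum_{s\in P}\Delta_s\le 2|\NS|-3$ and finally $|\NS|+E\le 3|\NS|-2$.

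The main obstacle I expect is exactly the refinement $\Delta_s\le 2-t_s$: the bound $\Delta_s\le 2$ of Lemma~\ref{lem:decompnonred} is not by itself sharp enough, and one must re-run its case distinction (the single-box case $\mbsb=\{B\}$, the two-component splits, and the \emph{0-box} case) to verify that every ideal-blocked split removes precisely one box and that the three directional blockings are not inadvertently identified within one iteration; in particular, when a single point attains several ideal coordinates it must be charged one blocked split per attained coordinate, so that the total still reaches three. Finally I would discharge the genericity Assumption~\ref{assump:znew2}(1) through the reduction to arbitrary position of Section~\ref{subsec:generalize}, and record that the three simultaneous savings require $|\NS|\ge 3$.
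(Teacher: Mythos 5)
Your overall route is the paper's: count one subproblem per box explored, use Lemma~\ref{lem:decompnonred} to charge a net increase of at most two boxes to each nondominated point, and recover three units of slack from the three components of the ideal point. Your bookkeeping identity $E=1+\sum_{s\in P}\Delta_s$ is a cleaner version of the paper's ``at most $3|\NS|+1$ boxes including $B_0$'', and the refinement $\Delta_s\le 2-t_s$ is sound: the proof of Lemma~\ref{lem:decompnonred} (via Lemma~\ref{lem:splitneighbor}) bounds $\sum_{B\in\mbsb}(|J'_B|-1)\le 2$ for the index sets $J'_B=\{i: z^s_i\ge v_i(B)\}$ determined by the $v$-criterion alone, and each split additionally suppressed by the condition $z^s_i>z^I_i$ lowers the actual net gain by one.

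There is, however, one step that fails as written: the claim that \emph{every} box $B$ containing $z^{(i)}$ satisfies $v_i(B)=z^I_i$, justified by ``$z^{(i)}\in B$ gives $v_i(B)\le z^{(i)}_i$''. This inference is wrong, because $v(B)$ is the lower bound of the individual subset $V(B)$, not of $B$; a point of $B$ need not lie in $V(B)$. Concretely, if $z^{(i)}$ lies in two boxes $B$ and $\Bsi(B)$, then $v_i(B)=u_i(\Bsi(B))>z^I_i$ even though $z^{(i)}\in B$, so for that $B$ the $v$-criterion already forbids the $i$-split and no saving accrues there. What you actually need --- and what is true --- is that \emph{at least one} box of $\mbsb$ has $v_i(B)=z^I_i$, so that exactly for that box an $i$-split sanctioned by the $v$-criterion is blocked solely by $z^s_i>z^I_i$. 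To see this, take $B^*\in\mbsb$ with minimal $u_i$-value among the boxes of $\mbsb$: if $\Bsi(B^*)$ existed, it would satisfy $u_l(\Bsi(B^*))\ge u_l(B^*)$ for $l\ne i$ and, since every box's upper bound strictly exceeds $z^I$ componentwise, it would also contain $z^s$, contradicting the minimality of $u_i(B^*)$; hence $\Bsi(B^*)=\emptyset$ and $v_i(B^*)=z^I_i$. With this repair your count $\sum_s t_s\ge 3$ and the final bound $3|\NS|-2$ go through exactly as you describe.
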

\begin{proof}
In every iteration of Algorithm~\ref{table:algoVsplit}, one subproblem is solved. 
Thus, the number of subproblems to be solved equals the number of iterations. 
When a nondominated point is generated, the number of boxes increases by at most two according to Lemma~\ref{lem:decompnonred}. 
As every nondominated point is generated exactly once, and since every empty box is investigated exactly once in order to verify that no further nondominated points are contained, at most $3 |\NS|$ boxes are explored in the course of the algorithm. 
Together with the initial box, at most $3 |\NS|+1$ boxes are explored, which corresponds to the number of subproblems to be solved.
As we additionally assume that the ideal point is given, we can reduce this bound further: 
In every iteration in which the current nondominated point equals the ideal point in at least one component, one box per component equal to the ideal point can be directly discarded. For each component $\itkk$, there must exist at least one nondominated point whose $i$-th component equals $z^I_{i}$. Therefore, the total number of subproblems to be solved is at most $3|\NS|-2$.
\qed \end{proof}


\subsection{Applying the $v$-split to arbitrary nondominated sets for tricriteria problems} \label{subsec:generalize}
For the construction of the $v$-split we assumed that no pair of nondominated points has an identical value in at least one component, i.e., that all values are pairwise different (Assumption~\ref{assump:znew2}~(1)). Under this assumption, the individual subsets of all (non-redundant) boxes are boxes themselves, which is the basis for the $v$-split criterion.
In practice, Assumption~\ref{assump:znew2}~(1) may be violated since arbitrary nondominated points may coincide in up to $m-2$ components, i.e., in one component for $m=3$. In this case, additional redundant boxes may occur as the following example shows.

\begin{example}\label{ex:generalvsplit}
Let $z^{1}=(3,1,4)^{\top},$ $z^{2}=(3,2,1)^{\top}$ and let the initial search region be given as $B_0:=\{z \in Z \, : \, 0 \leq z_{i} \leq 5 \; \forall \, i=1,2,3 \}$.  
If we insert $z^1$ into $B_{0}$, we obtain the three subboxes
$B_{1,i}:=\{z \in B_{0} \, : \, z_{i} < z^1_{i} \}, \; i=1,2,3$, with respective upper bounds
$$u(B_{11})=(3,5,5)^{\top},
u(B_{12})=(5,1,5)^{\top},
u(B_{13})=(5,5,4)^{\top}.
$$
The second point $z^{2}=(3,2,1)^{\top}$ is only contained in $B_{13}$. Thus, $B_{13}$ is replaced by the three subboxes
$B_{2,i}:=\{z \in B_{13} \, : \, z_{i} < z^2_{i} \}, \; i=1,2,3$, with respective upper bounds 
$$u(B_{21})=(3,5,4)^{\top},
u(B_{22})=(5,2,4)^{\top},
u(B_{23})=(5,5,1)^{\top}.
$$ 
It holds that $B_{21} \subseteq B_{11}$.  
\end{example}
Note that under Assumption~\ref{assump:znew2}~(1) no redundancy appears if $|\mbsb|=1$, which is, as shown in Example~\ref{ex:generalvsplit}, no longer true for arbitrary nondominated points. 
If the redundant box $B_{21}$ is removed from the decomposition, i.e., if we set $\mathcal{B}_{3}:=\{B_{11},B_{12},B_{22},B_{23}\}$, then, however, the individual subset $V(B_{11})$ does not have the structure of a box anymore, as
\begin{align*}
V(B_{11})&:=B_{11} \; \backslash \left( \bigcup_{\tB \in \mathcal{B}_{3}\backslash\{B_{11} \}} \tB  \right)\\
&= \{ z \in B_{11} : z \geqq (0,2,1)^{\top} \} \cup \{ z \in B_{11} : z \geqq (0,1,4)^{\top} \}.
\end{align*}
Nevertheless, the box format of the individual subsets can be preserved if we maintain the redundant box $B_{21}$ in the decomposition. Then, $V(B_{11})=\{z \in B_{11} : z \geqq v(B_{11}) \}$ with $v(B_{11})=(0,1,4)^{\top}$ holds. Also the individual subset of $B_{21}$ has box format with $v(B_{21})=(3,2,1)^{\top}$. However, as $B_{21} \subseteq B_{11}$, it holds that $V(B_{21})=\emptyset$.

Despite some individual subsets being empty, the $v$-split can be applied regularly in the following iterations since the lower bound $v(B)$ is compared to the current nondominated point~$z^s$ component-wise. 
Thus, it is irrelevant for the $v$-split whether $V(B)$ for some $B \in \mbs$ is empty or not. 
Clearly, in Example~\ref{ex:generalvsplit}, box $B_{21}$ cannot be split with respect to the first component, but it can be split with respect to the second and the third component like a `regular' non-redundant box. }

In order to distinguish the redundant boxes that appear in the case that 
a point equals a previously generated point in one component from the actual redundant boxes, we call the former boxes \emph{quasi non-redundant} boxes. 
Evidently, Lemma~\ref{lem:neighb} does not hold if \qnr boxes are part of the decomposition. 
However, the neighborhood structure which was obtained under Assumption~\ref{assump:znew2}~(1) can be preserved in the presence of \qnr boxes if we 
use a recursive update of the neighbors that is analogous to the non-redundant case, i.e., that uses the same sorting of the boxes. 
Then, $\Bsi(B)$ can be set as derived in Corollary~\ref{cor:proof}. This in turn means that the $v$-split as well as Algorithm~\ref{table:algoVsplit} do not need to be changed, but can be applied also when Asssumption~\ref{assump:znew2}~(1) is removed. Thus, Theorem~\ref{cor:runtime} which shows that the number of subproblems is bounded by $3 |\NS|-2$ holds independently of Assumption~\ref{assump:znew2}~(1). 
Finally, we revisit and extend the previous example in order to illustrate how Algorithm~\ref{table:algoVsplit} is applied in the presence of \qnr boxes.

\begin{example}\label{exo:quasinonred}
As in Example~\ref{ex:generalvsplit}, let 
$\mathcal{B}_{1}:=\{ B_{0} \}$ with 
$$
B_0:=\{z \in Z \, : \, 0 \leq z_{i} \leq 5 \; \forall \, i=1,2,3 \}
$$
be a given initial decomposition, 
and let $z^{1}=(3,1,4)^{\top}$ and $z^{2}=(3,2,1)^{\top}$ be two nondominated points.  
Then, the decomposition of the search region (including the \qnr box $B_{21}$) at the beginning of the third iteration is
$\mathcal{B}_{3}:=\{B_{11}, B_{12}, B_{21}, B_{22}, B_{23}\}$ with
\begin{align*}
u(B_{11})&=(3,5,5)^{\top}, \quad v(B_{11})=(0,1,4)^{\top},\\
u(B_{12})&=(5,1,5)^{\top}, \quad v(B_{12})=(3,0,4)^{\top},\\
u(B_{21})&=(3,5,4)^{\top}, \quad v(B_{21})=(3,2,1)^{\top},\\
u(B_{22})&=(5,2,4)^{\top}, \quad v(B_{22})=(3,1,1)^{\top},\\
u(B_{23})&=(5,5,1)^{\top}, \quad v(B_{23})=(3,2,0)^{\top}.
\end{align*}
The corresponding individual subsets are depicted in Figure~\ref{fig:quasinonred1}~(a). 
Note that the empty individual subset of the \qnr box $B_{21}$ is illustrated as the two-dimensional face 
$$
\left\{ z \in B_{0} :  v(B_{21}) \leqq z \leqq u(B_{21}) \right\}.
$$

Let now as third nondominated point 
$z^{3}=(2,2,2)^{\top}$ be given. 
As $z^3$ is contained in $B_{11}$ and $B_{21}$, we consider 
$v(B_{11})=(0,1,4)^{\top}$ and 
$v(B_{21})=(3,2,1)^{\top}$ for the $v$-split.  
Comparing $z^3$ with these two vectors reveals that $B_{11}$ is split with respect to the first and the second component, and $B_{21}$ is split with respect to the second and the third component, which yields
$$
u(B_{31})=(2,5,5)^{\top},
u(B_{32})=(3,2,5)^{\top},
u(B_{32}')=(3,2,4)^{\top},
u(B_{33})=(3,5,2)^{\top}.
$$
Hence, $\mathcal{B}_{4}:=\{B_{12}, B_{22}, B_{23}, B_{31}, B_{32}, B_{32}' , B_{33}\}$.
As $B_{31}$ is the only box obtained by a split with respect to the first component, 
we obtain $v(B_{31})=(0,2,2)^{\top}$. Analogously, $v(B_{33})=(2,2,1)^{\top}$.
For the update of $v(B_{32})$ and $v(B_{32})'$, the upper bound vectors 
$u(B_{32})$ and 
$u(B_{32}')$
are ordered increasingly with respect to one component $j \neq 2$ and decreasingly with respect to the remaining component $k \neq j, k \neq 2$, e.g., $j=1$ and $k=3$. 
As $u_{1}(B_{32}) = u_{1}(B_{32}')$ and $u_{3}(B_{32}) > u_{3}(B_{32}')$, we can order the boxes strictly decreasingly with respect to $k=3$. Therefore,
$$
v(B_{32})=(2,1,4)^{\top} \quad \text{and} \quad
v(B_{32}')=(3,2,2)^{\top}
$$
is obtained. 
Note that $v_{1}(B_{32}')=u_{1}(B_{32}')$ and $v_{2}(B_{32}')=u_{2}(B_{32}')$.
The \qnr box $B_{32}'$ is completely contained in $B_{22}$ and $B_{32}$, 
as $(3,2,4)^{\top} \leqq (5,2,4)^{\top}$ and $(3,2,4)^{\top} \leqq (3,2,5)^{\top}$, respectively.  

The individual subsets of all $B \in \mathcal{B}_{4}$ are depicted in Figure~\ref{fig:quasinonred1}~(b).
As the individual subset $V(B_{32}')$ is empty, we depict the set
\[
\{ z \in B_{0} :  v(B_{32}') \leqq z \leqq u(B_{32}') \} = 
\{ z \in B_{0} :  (3,2,2)^{\top} \leqq z \leqq (3,2,4)^{\top} \}
\]
instead, which describes a one-dimensional face. 
It is represented as a black line in Figure~\ref{fig:quasinonred1}~(b).
 \end{example}

\begin{figure}
\subfigure[]{
\includegraphics[width=.49\textwidth]{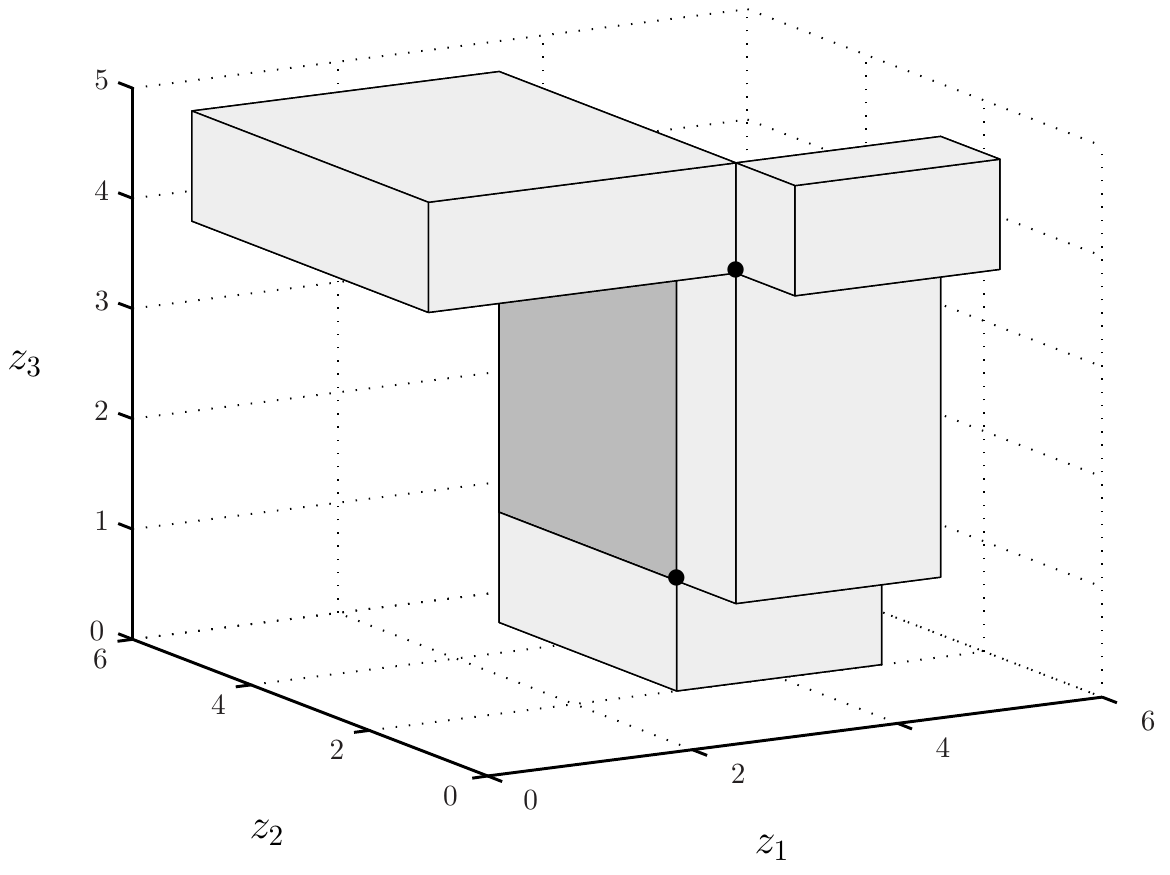}
}
\subfigure[]{
\includegraphics[width=.49\textwidth]{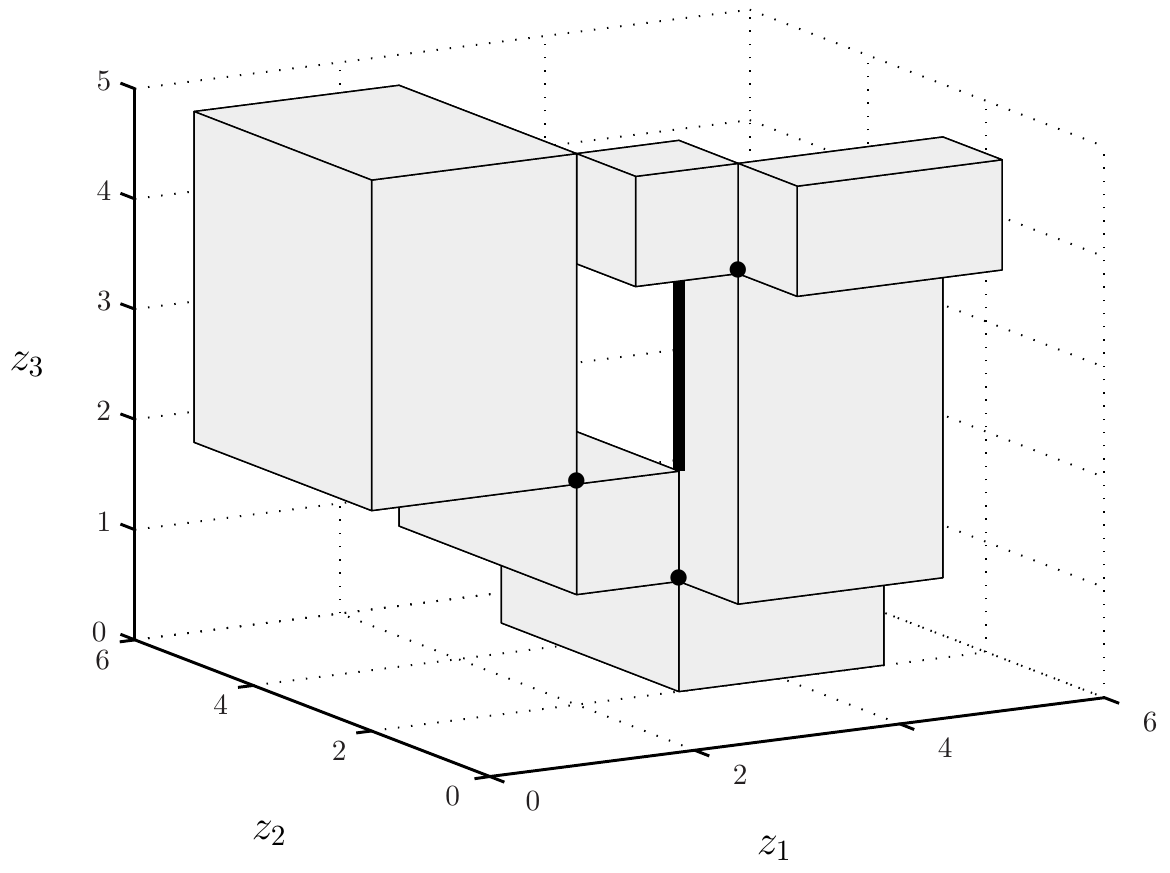}
}
\caption{Illustration of the sets $V(B)$ in Example~\ref{exo:quasinonred};
In (a) the individual subset of the occurring \qnr box (which is actually empty) is represented as a slightly darker, two-dimensional face, in (b) as a black one-dimensional face, i.e., a line.
 }
\label{fig:quasinonred1}
\end{figure}

Figure~\ref{fig:largeexo} shows an example with $68$ nondominated points for which Assumption~\ref{assump:znew2}~(1) does not hold. 
After having determined the initial search box, $3 |\NS|-2=202$ subproblems are solved until the termination criterion of Algorithm~\ref{table:algoVsplit} is reached, i.e., the upper bound derived in Theorem~\ref{cor:runtime} is sharp and holds also when \qnr boxes occur.  

\begin{figure}
\centering
\includegraphics[width=.55\textwidth]{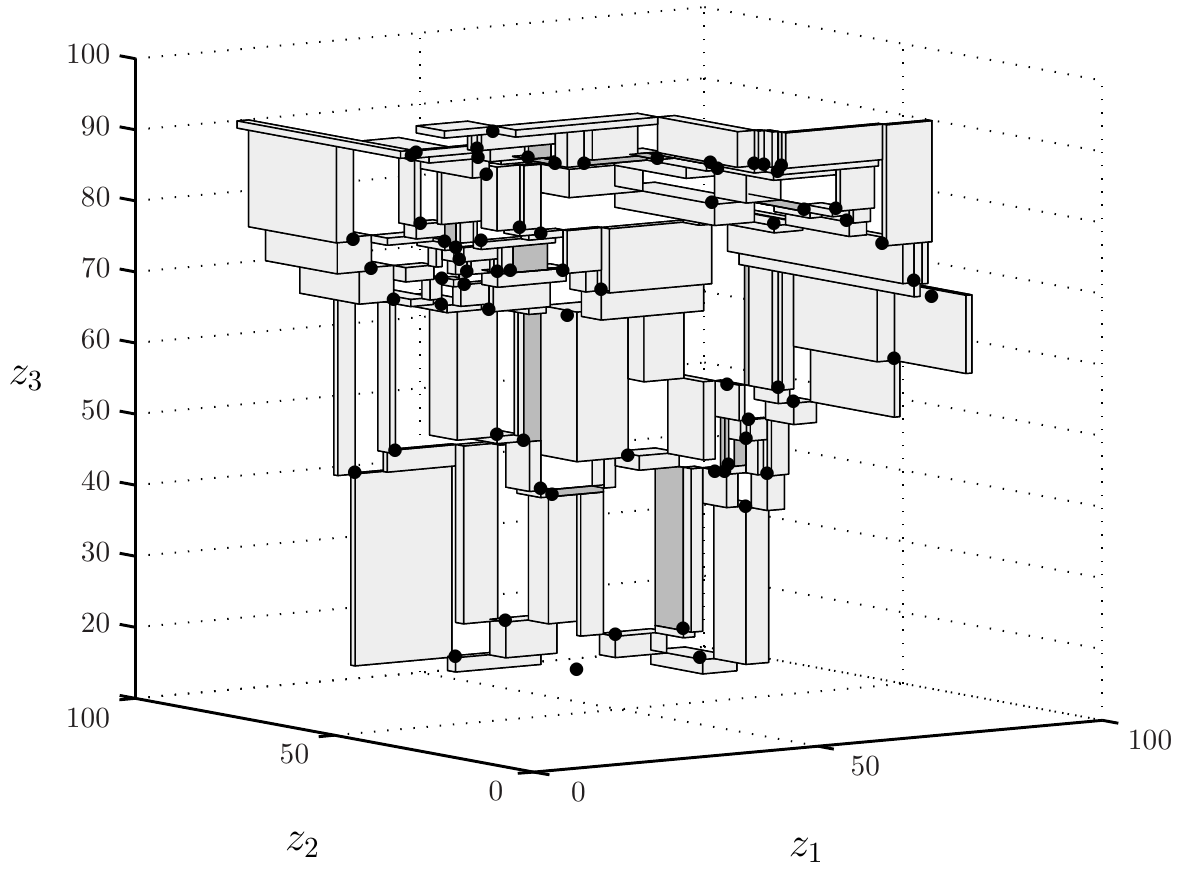} 
\caption{Individual subsets of all boxes of the final decomposition for an example with $68$ nondominated points}
\label{fig:largeexo}
\end{figure}
 
%
%
\section{Using the $\varepsilon$-constraint method as scalarization}
 \label{sec:econstr}

Algorithm~\ref{table:algoVsplit} presented in Section~\ref{sec:vsplit} is formulated independently of a specific scalarization. 
In every iteration, only points that are dominated by the current nondominated point are eliminated.  
In this section we show that we can reduce the search region and, thereby, the number of subproblems further if we use the \ec{} method. 
The specific properties of the \ecm are also used, e.g., in 
\citet{laumanns06}, \citet{lokman13}, \citet{oezlen13} and \citet{kirlik14}.
However, as to the best of our knowledge we present the first algorithm for tricriteria problems whose number of subproblems is proven to depend linearly on the number of nondominated points, the bound which is derived in this section is new as well. 

The reduction of the search region stems from the following 
property of the $\ve$-constraint method, which holds for any number of criteria. 
First, recall that for every $z^* \in \NS$, 
by definition of nondominance, we can exclude the two sets
$$
S_{1} (z^*):= \{z \in B \, : \, z \leqq z^* \}  \quad \textnormal{and} \quad
S_{2} (z^*):= \{z \in B \, : \, z \geqq z^* \}
$$
from 
every box $B$ that contains $z^*$.
If the point $z^*$ has been obtained as an optimal solution of an $\ve$-constraint problem of the form
\begin{align} \label{prob:lexec}
\min \quad & z_{1} \notag \\
\textrm{s.t.} \quad & z_{i} < u_{i}(\bar{B}) \quad \forall \, i =2,\dots,m,
\end{align}
where $\bB$ is a box of the current decomposition, then, additionally, the set
$$
S_{1}'(z^*):= \{z \in \bB \, : \, z_{1} < z_{1}^* \} 
= \{z \in \R^m \, : \, z_{1} < z_{1}^*, \; z_{i} < u_{i}(\bB)  \; \forall \, i=2,\dots,m \}
$$
cannot contain any further points, as this would contradict the optimality of $z^*$ in \eqref{prob:lexec}. 
Note that $S'_{1}(z^*)$ depends on $\bB$ as well as on the component~$i$ with respect to which the \ec{} problem is minimized. 
We choose $i=1$ without loss of generality. 
Also note that an optimal solution of \eqref{prob:lexec} is only weakly efficient, in general. Hence,
in order to guarantee that $z^*$ is nondominated,
a lexicographic \citep[see, e.g.,][]{laumanns06}, a two-stage \citep[see, e.g.,][]{kirlik14} or an augmented \citep[see, e.g.,][]{lokman13} \ec{} method should be employed. 
When solving~\eqref{prob:lexec}, two cases might occur. 
If $z^*_{1} \geq u_{1}(\bB)$, then $\bB \subseteq S_{1}'(z^*)=\emptyset$ holds. 
This case corresponds to the situation in which the current subproblem is infeasible, see Line~\ref{table:algoVsplitempty} in Algorithm~\ref{table:algoVsplit}. Box $\bB$ is removed from the decomposition and a new iteration starts. 
Otherwise, i.e., if $z^*_{1}<u_{1}(\bB)$, $z^* \in \bB$ holds. 
In Figure~\ref{fig:exclusion}, an example of the sets $S_{1}$, $S_{1}'$ and $S_{2}$ for $z^* \in \bB$ is depicted. 

\begin{figure}
\centering
\includegraphics[width=.9\textwidth]{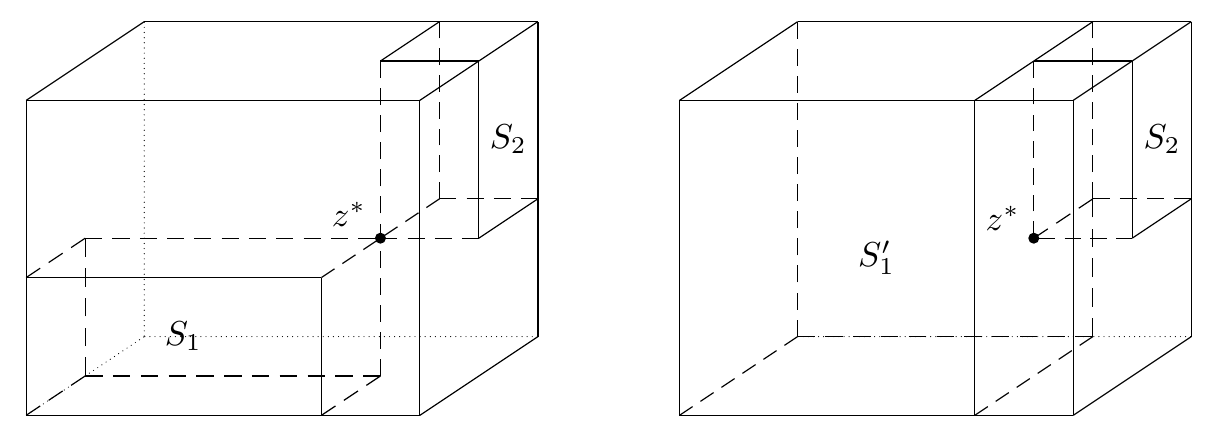}
\caption{Reduction of the search region for $m=3$: Solely based on nondominance of $z^*$ (left) and when taking into account that $z^*$ is obtained as optimal point of a corresponding \ecm (right)}\label{fig:exclusion}
\end{figure}

We consider now the implications of 
this additional reduction of the search region in combination with the $v$-split algorithm. 
Let box $\bB \in \mbs$ be the currently selected box, and let $z^s$ denote the nondominated point obtained in iteration~$s$, $z^s \in \bB$.
Then, the set $S_{1}'(z^s):=\{z \in \bB \, : \, z_{1} < z_{1}^s \}$ corresponds to the box obtained by a split of $\bB$ with respect to the first component. 
Since $S'_{1}(z^s)$ is empty, $\bB$ does not need to be split with respect to the first component. 
A split with respect to all other components $i \in \{2,3\}$ is performed according to the $v$-split criterion, i.e., if and only if $z^s_{i} \geq v_{i}(\bB)$ holds for $i \in \{ 2,3\}$.
For all other boxes $B \in \mbsb \backslash \{\bB\}$ the usual $v$-split criterion is employed with respect to all components. 
In particular, a box $B \in \mbsb \backslash \{\bB\}$ must be split with respect to the first component whenever $z^s_{1}\geq v_{1}(B)$ holds.

In order to benefit from the fact that the set $S_{1}'(z^s)$ can be excluded additionally from the search region, we must guarantee that the box resulting from a split of $\bB$ with respect to the first component  would have been part of the decomposition.
According to the definition of the $v$-split, this is the case if $z^s_{1} \geq v_{1}(\bB)$ holds. 
A sufficient criterion to guarantee that $z^s_{1} \geq v_{1}(\bB)$ holds is to select a box $\bB$ that does not have a neighbor in $\mbs$ with respect to $i=1$, i.e., 
$\Bsi(\bB)=\emptyset$.
Equivalently, we might select a box $\bB$ which satisfies $v_{1}(\bB)=\min\{v_{1}(B) : B \in \mbs\}$.
This means that we replace
Line~\ref{algoV:select} in Algorithm~\ref{table:algoVsplit} by 
`choose $\bB \in \mbs$ such that $v_{1}(\bB)=\min\{v_{1}(B) : B \in \mbs\}$'.
If a box with minimal value~$v_{1}$ is selected according to this rule at the beginning of each iteration, then one box is saved in each iteration in which a new nondominated point, that does not equal the ideal point in the first component, is generated in the selected box. 
Therefore, we obtain $2|\NS|-1$ as new upper bound on the number of subproblems to be solved in the tricriteria case. 

%
%
\section{Numerical results} \label{sec:num}

For our tests we use five instances of a tricriteria multidimensional knapsack problem, i.e., a zero-one knapsack problem with three objectives and three constraints. The considered instances have already been employed for numerical experiments, e.g., in \citet{laumanns05} and \citet{oezlen13}, wherefore we regard them as a good benchmark.
The five instances correspond to five different numbers of (knapsack) items $n=10,20,30,40,50$. The respective cardinality of the nondominated set is 
$9,61,195, 389$ and $1048$, as reported in \citet{laumanns05} and \citet{oezlen13} and verified by our algorithms. 
Note that we generated and saved the nondominated set of every instance once. 
For all methods presented in the following 
we always compare the respective generated representation with the saved nondominated set in order to verify that the complete nondominated set is computed correctly.
The computational platform for our study is a compute server with 4x Intel Xeon E7540 CPUs (2.0 GHz) and 128 GB of memory.  
All algorithms are (re)implemented in MATLAB R2013a and call IBM ILOG CPLEX Optimization Studio Version 12.5 to solve the subproblems. 
We turned off the option of CPLEX to parallelize.

\subsection{Validation of the $v$-Split Algorithm}

We test Algorithm~\ref{table:algoVsplit} in combination with a weighted Tchebycheff method (WT) and an \ecm (EC). 
In particular, we are interested in the question whether the upper bounds on the number of subproblems $3|\NS|-2$ (WT) and $2|\NS|-1$ (EC), which were derived in Sections~\ref{sec:vsplit} and~\ref{sec:econstr}, can be validated numerically. 
Both scalarizations are tested in an augmented and a two-stage formulation, see \citet{steuer83} in case of the \wt scalarization.
Note that typically it makes a difference for computational time whether an augmented or a two-stage approach is used. 
This is caused by the fact that in the latter, two integer problems are solved in every subproblem in which the first stage yields a feasible solution. In contrast, when an augmented method is used, only one integer problem per subproblem is solved. 
The parameters of all scalarizations are set adaptively dependent on the upper bound vector of the box that is selected in the current iteration. 
For the two-stage variant, we solve 
\eqref{prob:lexec} in the first stage and
$$
\min \left\{ 
\sum_{i=1}^m z_{i} : z_{j} \leq z_{j}^*, j=1,\dots,m
\right\}
$$
in the second stage, where $z^*$ denotes the weakly nondominated point obtained in the first stage. 
For the augmented variant we 
exploit the given integrality in order to determine a suitable augmentation parameter. 
For a detailed description we refer to \citet{daechert14}.

In Table~\ref{chap4:tricrit:results1}, the results are reported. 
The CPU times (in seconds) are averaged over three independent runs. 
To facilitate the comparison of the number of subproblems with the values $3|\NS|-2$ and $2|\NS|-1$, respectively, we indicate these values in parentheses in the second column of Table~\ref{chap4:tricrit:results1}.

\begin{table}
\footnotesize
\centering
\begin{tabular}{cc|c|cc|cc}
\addspacerow 
\multirow{2}{*}{$n$}& \multirow{2}{*}{$|\NS|$} &  &
\multicolumn{2}{c|}{Algorithm~\ref{table:algoVsplit}  (WT)} & 
\multicolumn{2}{c}{Algorithm~\ref{table:algoVsplit}  (EC)} \\ \cline{4-7} 
\addspacerow & & & CPU & \#SP & CPU & \#SP   \\ 
\hline \hline 
\addspacerow  \multirow{2}{*}{10} & 9 & TS  & 10.03 & \multirow{2}{*}{25} &  7.97 & \multirow{2}{*}{17} \\
 \addspacerow  & $(25/17)^{\star}$ &  A  &  7.81 &  &  6.09 \\ \hline

 \addspacerow  \multirow{2}{*}{20} & 61 & TS  & 56.42 & \multirow{2}{*}{181}  & 43.29 & \multirow{2}{*}{121} \\ 
 \addspacerow  & $(181/121)^{\star}$ &  A  & 42.72 &  & 30.02  \\ \hline
 
\addspacerow  \multirow{2}{*}{30} & 195 & TS  & 213.31 & \multirow{2}{*}{583} & 163.15 & \multirow{2}{*}{389} \\ 
 \addspacerow  & $(583/389)^{\star}$ &  A  & 163.29 &  & 114.39 \\ \hline

 \addspacerow  \multirow{2}{*}{40} & 389 & TS  & 464.47 & \multirow{2}{*}{1165} & 361.74 & \multirow{2}{*}{777} \\ 
 \addspacerow  & $(1165/777)^{\star}$ &  A  & 361.01 &  & 257.64  \\ \hline

 \addspacerow  \multirow{2}{*}{50} & 1048 & TS  & 1552.56 & \multirow{2}{*}{3142}  & 1369.89 & \multirow{2}{*}{2095} \\ 
 \addspacerow  & $(3142/2095)^{\star}$ &  A  & 1174.90 & &  1012.15 \\ \hline
  \hline 
\end{tabular}
\caption{Average CPU times (in seconds) and number of subproblems solved 
by Algorithm~\ref{table:algoVsplit} in combination with a weighted Tchebycheff method (WT) and an \ecm (EC). 
Each scalarization is evaluated in a two-stage (TS) and an augmented (A) formulation, respectively.   
In the second column, additionally to $|\NS|$, the theoretical upper bounds 
$3|\NS|-2$ (WT) and $2|\NS|-1$ (EC) are given in parentheses $()^{\star}$ for better comparison.  
}
\label{chap4:tricrit:results1}
\end{table}

Consider first the number of subproblems solved.
From Table~\ref{chap4:tricrit:results1} we see that Algorithm~\ref{table:algoVsplit}~(WT) requires exactly 
$3 |\NS|-2$ and Algorithm~\ref{table:algoVsplit}~(EC) exactly 
$2 |\NS|-1$ subproblems for all problem sizes and for both formulations, i.e., for a two-stage (TS) and an augmented (A) formulation.
Hence, the predicted upper bound on the number of subproblems is met precisely. 

Regarding computational times in Table~\ref{chap4:tricrit:results1} we observe that all variants using (EC) are considerably faster than the variants using (WT) as the former solve about one third less subproblems compared to the latter. 
However, the savings with respect to computational time are not proportional to the savings with respect to the number of subproblems, in general.
Recall from Section~\ref{sec:econstr} that in order to achieve a saving with respect to the number of subproblems when using the \ec{} method, the box at the beginning of each iteration cannot be selected arbitrarily, but a box with minimal value~$v_{1}$ must be identified. 
This causes an additional computational effort. 

\subsection{Comparison of Three Recent Algorithms to the New Algorithm}

We additionally compare Algorithm~\ref{table:algoVsplit} to three recent algorithms for generating complete representations for discrete multicriteria optimization problems with finite nondominated set.
These comprise the second algorithm stated in \citet{lokman13}, the approach of \citet{kirlik14} and the method of \citet{oezlen13}. 
All three methods employ an \ec{} scalarization, however, each in a different variant: 
\citet{lokman13} use an augmented, 
\citet{kirlik14} a two-stage and
\citet{oezlen13} a lexicographic \ec{} method.
In order to make the comparison in our numerical study as fair as possible, we implement and test all algorithms with both, a two-stage and an augmented formulation,
where the augmentation parameter is set adaptively according to the formulas presented in \citet{daechert14}.
Note, however, that thereby we 
extend and/or modify the original algorithms of \citet{lokman13}, \citet{kirlik14} and \citet{oezlen13}. 
Dependent on the formulation and particular parameters (as the augmentation parameter) used, the order in which the points are generated might change. However, this does not affect the general functionality of the respective algorithms.   

In the literature, 
further methods to compute complete representations of discrete multicriteria optimization problems are presented, see, e.g., \citet{tenfelde03}, \citet{sylva04}, \citet{laumanns05}, \citet{laumanns06} and \citet{oezlen09}. 
However, all these approaches have been reported to be outperformed by at least one of the three methods that we incorporate into this numerical study. 
 
We reimplement the algorithms of Lokman and K\"oksalan (LK), Kirlik and Say{\i}n (KS) and \"Ozlen, Burton and MacRae (OBM) with the following slight modifications. 
The algorithm of \citet{lokman13} is originally formulated for problems in maximization format. 
For the sake of simplicity, we implement it for minimization problems. 
Moreover, as recommended in \citet{lokman13}, we keep the list of current nondominated points sorted, as, thereby, better computational times are obtained.
In (LK) and (OBM), the right-hand side vectors of previously solved subproblems are saved as well as the corresponding results, i.e., a (nondominated) point or a value indicating infeasibility. 
Before solving a subproblem, the list of bounds is scanned to find a so-called relaxation. 
 If a relaxed problem exists and it is either infeasible or the saved point is feasible for the current subproblem, then the current subproblem 
 does not need to be solved since the solution of the relaxation is also valid for the considered subproblem.
 In this case, the bounds of the current subproblem should not be saved, as they do not contribute new information and, clearly, the shorter the list of bounds is, the better computational times can be expected. 
In the implementation of (KS) we change a detail with respect to the pseudocode given in \citet{kirlik14}. When a new nondominated point is generated, all cells of the decomposition are checked twice in \citet{kirlik14}: first, to identify the cells to be split, secondly, to remove cells that can not contain further nondominated points. 
We combine both checks, which are performed within two independent procedures in the original pseudocode, into one by removing cells that can not contain further nondominated points immediately after or during the split.   
In our implementation, this slight modification led to a huge saving of computational time.

The CPU times and the number of subproblems solved by all methods and for all instances are given in Table~\ref{chap4:tricrit:results3}. 
Again, the given CPU times are averaged over three independent runs. 

\begin{table}
\footnotesize
\centering
\begin{tabular}{cc|c|cc|cc|cc|cc}
\addspacerow
\multirow{2}{*}{$n$}& \multirow{2}{*}{$|\NS|$}& & 
\multicolumn{2}{c|}{LK} & \multicolumn{2}{c|}{KS} & \multicolumn{2}{c|}{OBM} & \multicolumn{2}{c}{Algorithm~\ref{table:algoVsplit}  (EC)} \\
\cline{4-11}
\addspacerow
& && CPU & \#SP & CPU & \#SP & CPU & \#SP & CPU & \#SP \\  
\hline \hline

\addspacerow  \multirow{2}{*}{10} & \multirow{2}{*}{9} & TS  &  9.48 & \multirow{2}{*}{20} &  8.50 & \multirow{2}{*}{17} &  8.85 & \multirow{2}{*}{19} & 7.97 & \multirow{2}{*}{17} \\ 
 \addspacerow  &  &  A  &  6.67 & &  6.07 & &  6.46 & & 6.09 & \\ \hline

 \addspacerow  \multirow{2}{*}{20} & \multirow{2}{*}{61} & TS  & 53.04 & {127} & 50.08 & \multirow{2}{*}{115} & 48.50 & \multirow{2}{*}{117} & {43.29} & \multirow{2}{*}{121} \\ 
\addspacerow  & &  A  & 31.76 & {128} & 30.26 & & 28.83 & & {30.02} & \\ 
\hline

 \addspacerow  \multirow{2}{*}{30} & \multirow{2}{*}{195} & TS  & 267.88 & {468} & 242.42 & {373} & 197.05 & {375} & {163.15} & \multirow{2}{*}{389} \\ 
 \addspacerow  &  &  A  & 159.12 & {464} & 155.89 & {372} & 110.33 & {374} & {114.39} \\ \hline

 \addspacerow  \multirow{2}{*}{40} & \multirow{2}{*}{389} & TS  & 657.58 & \multirow{2}{*}{852} & 701.95 & {739} & 430.84 & {741} & {361.74} & \multirow{2}{*}{777} \\ 
 \addspacerow  &  &  A  & 445.07 & & 516.15 & {738} & 246.68 & {740} & {257.64} & \\ \hline
 
 \addspacerow  \multirow{2}{*}{50} & \multirow{2}{*}{1048} & TS  & 4772.89 & {2193} & 4174.48 & {1913} & 1533.93 & {1915} & {1369.89} & \multirow{2}{*}{2095} \\ 
 \addspacerow  &  &  A  & 4129.47 & {2200} & 3603.67 & {1914} & 945.35 & {1916} & {1012.15} \\ \hline
 \hline
\end{tabular}
\caption{Average CPU times (in seconds) and number of subproblems solved by three state of the art algorithms and Algorithm~\ref{table:algoVsplit}~(EC). 
Each scalarization is evaluated in a two-stage (TS) and an augmented (A) formulation, respectively.   
}
\label{chap4:tricrit:results3}
\end{table}

Regarding the number of subproblems solved, we observe that (KS) generates a complete representation within the lowest number of subproblems among all compared methods in all instances.
Method~(LK) requires the largest number of subproblems in all instances. 
While methods (OBM), (KS) and Algorithm~\ref{table:algoVsplit}~(EC), except (OBM) for $n=10$, solve at most $2 |\NS|-1$ subproblems, 
(LK) exceeds this bound in all instances. 
These results go in line with the results of \citet{lokman13}, who state that they solved on average $2.08$ subproblems per nondominated point in their numerical study for a classic (one-dimensional) tricriteria knapsack problem.
Our results also coincide 
with the results of \citet{kirlik14}, who state that they required on average $1.97$ and at most $1.99$ subproblems per nondominated point with their algorithm when it was applied to a classic tricriteria knapsack problem.
In our study, (KS) even performs better. In the worst case ($n=30$) less than $1.92$ subproblems per nondominated point are solved.  
The results of (OBM) can be compared directly with the results reported in \citet{oezlen13}, as they solve the same problem with the same instances. 
In their numerical study, $46, 333, 1204, 2357$ and $6001$ subproblems are solved for $n=10,\dots,50$, respectively. 
Interestingly, we obtain a considerably smaller number of subproblems with our reimplementation in all instances. 
A possible reason for this mismatch might be 
the scalarization used. 
While we apply (OBM) in combination with a two-stage and an augmented scalarization, a lexicographic \ec{} scalarization is used in \citet{oezlen13}, 
which might lead to a higher number of subproblems.

Considering CPU times, we obtain a slightly different picture. 
For the small instance $n=10$, the CPU times of all methods are quite close. 
For all other problem sizes, the best CPU times are clearly obtained by Algorithm~\ref{table:algoVsplit}~(EC) and (OBM). 
When the augmented formulation is used, (OBM) consumes less CPU time than Algorithm~\ref{table:algoVsplit}~(EC).
When the two-stage formulation is used, Algorithm~\ref{table:algoVsplit}~(EC) outperforms (OBM) for all problem sizes.

Both other methods, i.e., (LK) and (KS) require considerably more CPU time than (OBM) and Algorithm~\ref{table:algoVsplit}~(EC) for $n=20,30,40,50$. 
Besides $n=40$, (LK) performs worst. 
As (LK) solves more subproblems than all other methods, this result is not surprising. 
In contrast, the rather bad performance of (KS) is not expected with regard to the fact that (KS) solves the lowest number of subproblems in basically all instances.
The reason lies in the huge number of cells, which are maintained in (KS) and which are scanned several times during each iteration. 
This computational effort is reflected in the CPU times. 

We summarize that our new algorithm based on the $v$-split generates complete representations within the predicted number of subproblems. Moreover, it competes with state of the art algorithms.

\section{Conclusion}
In this paper, we positively answer the question whether there exists an algorithm which generates the entire nondominated set of a problem with more than two objectives by solving a number of subproblems which depends linearly on the number of nondominated points. We construct an algorithm which requires a linear number of subproblems for tricriteria problems. 
This is achieved by avoiding the generation of redundant boxes and by using neighborhood properties between the boxes. 
Further research should analyze whether and how the concept of individual subsets can be transferred to problems with more than three criteria. 
Moreover, the presented algorithm can be improved further by using the neighborhood properties also for identifying all boxes containing a current point. Thereby, no exhaustive search is needed in each iteration and the boxes can be updated more efficiently.
Finally, with slight modifications, the new algorithm can also be used
if only a representative subset of the nondominated set shall be generated.


\section{Acknowledgement}
The final publication is available at Springer via \href{http://dx.doi.org/10.1007/s10898-014-0205-z}{http://dx.doi.org/10.1007/s10898-014-0205-z}.

\bibliographystyle{myabbrvnat} 
\bibliography{dissbibliography}

\end{document}